\documentclass[reqno,11pt]{amsart}
\usepackage{amsmath,amsfonts,amssymb,amsthm,mathrsfs}
\usepackage{enumerate}
\allowdisplaybreaks[4]
\usepackage[T1]{fontenc}
\usepackage{times}
\usepackage[numbers, sort&compress]{natbib}
\usepackage{hyperref}
\usepackage[left=2.5cm,right=2.5cm,top=2.5cm,bottom=2.5cm]{geometry}
\hypersetup{
	colorlinks,
	citecolor=blue,
	filecolor=blue,
	linkcolor=blue,
	urlcolor=black
}

\numberwithin{equation}{section}
\newtheorem{thm}{Theorem}[section]
\newtheorem{lem}[thm]{Lemma}
\newtheorem{prop}[thm]{Proposition}
\newtheorem{defn}[thm]{Definition}
\newtheorem{exa}[thm]{Example}
\newtheorem{cor}[thm]{Corollary}
\newtheorem{rem}[thm]{Remark}
\theoremstyle{plain}

\renewcommand{\d}{\mathrm{d}}

\newcommand{\R}{\mathbb{R}}

\newcommand{\RN}{\mathbb{R}^N}

\newcommand{\lam}{\lambda}
\newcommand{\pa}{\partial}

\newcommand{\var}{\varepsilon}
\newcommand{\be}{\begin{equation}}
\newcommand{\ee}{\end{equation}}

\newcommand{\kap }{\kappa}
\newcommand{\si}{\sigma}

\newcommand{\om}{\omega}
\newcommand{\Om}{\Omega}
\newcommand{\diver}{\operatorname{div}}
\def\Xint#1{\mathchoice
    {\XXint\displaystyle\textstyle{#1}}%
    {\XXint\textstyle\scriptstyle{#1}}%
    {\XXint\scriptstyle\scriptscriptstyle{#1}}%
    {\XXint\scriptscriptstyle\scriptscriptstyle{#1}}%
    \!\int}
\def\XXint#1#2#3{{\setbox0=\hbox{$#1{#2#3}{\int}$}
        \vcenter{\hbox{$#2#3$}}\kern-.5\wd0}}

\def\dashint{\Xint-}
\newcommand{\cG}{\mathcal G}
\newcommand{\SobCap}{\operatorname{Cap}}
\newcommand{\capFN}{\operatorname{cap}_{F,N}}
\newcommand{\QN}{Q_N}
\newcommand{\W}{W}

\title[Sharp signed mass and rigidity]
{Sharp Signed Mass and Rigidity for Critical Finsler--Liouville Equations}

\author{Yanjun Liu}
\address{School of Mathematical Sciences, Chongqing Normal University, Chongqing 401331, China}
\email{liuyj@mail.nankai.edu.cn}

\author{Zhongwei Tang}
\address{School of Mathematical Sciences, Laboratory of Mathematics and
Complex Systems, Ministry of Education, Beijing Normal University,
Beijing 100875, China}
\email{tangzw@bnu.edu.cn}

\author{Heming Wang}
\address{State Key Laboratory of Mathematical Sciences, Academy of
Mathematics and Systems Science, Chinese Academy of Sciences,
Beijing 100190, China}
\email{hmwang@amss.ac.cn}

\subjclass[2020]{Primary 35J92, 35B53; Secondary 35B40, 35J60, 31C45}
\keywords{Finsler $N$-Laplacian; Liouville rigidity; signed mass; Pohozaev identity; critical capacity.}
\date{}

\begin{document}

\begin{abstract}
We study bounded-above entire solutions of critical Finsler--Liouville equations with a possibly sign-changing prescribed function and finite absolute mass. Under a lower cone-growth condition, we establish a sharp signed-mass bound without upper growth or monotonicity assumptions. With an additional growth condition, annular compactness for a sequence of dilations yields Pohozaev boundary limits and rigidity for radially nonincreasing prescribed functions. Explicit examples show that the rigidity threshold is optimal. In the Euclidean plane, the mass estimate also gives a sharp obstruction to completeness.

\end{abstract}

\maketitle

 {\small\tableofcontents}

\section{Introduction}\label{sec:introduction}

The Liouville equation $-\Delta w=e^w$ in $\R^2$ is a classical model for the classification of
entire solutions.
Liouville~\cite{Liouville1853} obtained a local analytic representation.
Chen and Li~\cite{ChenLi1991} classified entire solutions satisfying $\int_{\R^2}e^w\,\d x<\infty$
by the method of moving planes, obtaining the standard radial family up to translation.
Chou and Wan~\cite{ChouWan1994} later used complex analysis, whereas
Chanillo and Kiessling~\cite{ChanilloKiessling1995} combined a Pohozaev
identity with the isoperimetric inequality.

Esposito~\cite{Esposito2018} classified solutions of
$-\Delta_Nw=e^w$ in $\RN$, $N\ge2$, satisfying $\int_{\RN}e^w\,\d x<\infty$,
where $\Delta_N w=\diver(|\nabla w|^{N-2}\nabla w)$ is the Euclidean
$N$-Laplacian.
The proof uses inversion to study infinity, relying on the conformal
invariance of the $N$-Dirichlet energy~\cite{Loewner1959}.
For a general anisotropy, Euclidean inversion does not preserve the operator.

Anisotropy is described by a convex, positively one-homogeneous function
$F:\RN\to[0,\infty)$, positive away from zero and of class
$C^2(\RN\setminus\{0\})$, that measures gradients in place of the
Euclidean norm. The associated Finsler $p$-Laplacian is
\begin{equation*}\tag{F}\label{F-Laplacian}
 Q_pw=\diver(F(\nabla w)^{p-1}F_\xi(\nabla w)),\quad p\in (1,\infty),
\end{equation*}
where $F_\xi$ denotes the gradient of $F$. The vector field
$F(\xi)^{p-1}F_\xi(\xi)$ is extended continuously by zero at $\xi=0$.
The operator is associated with the energy
$p^{-1}\int_D F(\nabla w)^p\,\d x$ on a bounded domain $D$.
For $p\in (1,N)$, Ciraolo, Figalli, and Roncoroni~\cite{CiraoloFigalliRoncoroni2020}
classified positive finite-energy solutions of
\[
 -Q_pw=w^{p^*-1}\quad\text{ in }\, \RN,\quad p^*=\frac{Np}{N-p},
\]
under smoothness and uniform convexity assumptions on $F$. Their theorem
also includes convex cones with the natural Neumann boundary condition.
Here finite energy means $\int_{\RN}(w^{p^*}+F(\nabla w)^p)\,\d x<\infty$.

Recently, Zhang~\cite{Zhang2026}
established a local Schoen-type Harnack inequality and classified
positive entire weak solutions $w\in W^{1,p}_{loc}(\RN)$ of
\[
-\Delta_p w=w^{p^*-1}h(w)\quad\text{ in }\,\RN,
\qquad p^*=\frac{Np}{N-p},\quad p\in (1,N),
\]
where $\Delta_p w=\diver(|\nabla w|^{p-2}\nabla w)$ and
$h\in C((0,\infty))$ is nonincreasing, with
$0<c_h\le h(s)\le C_h<\infty$ for all $s>0$.
The classification does not assume finite energy.
Here monotonicity concerns the solution variable, in contrast to
the spatial monotonicity of $V$ considered below.
For the anisotropic equation $-Q_pw=w^{p^*-1}$ in $\RN$,
Le~\cite{Le2026} obtained corresponding
classification and Schoen-type Harnack results under smoothness
and uniform convexity assumptions on $F$, again without a
finite-energy assumption.

The anisotropic $N$-Liouville equation is the critical-dimensional
counterpart of the Sobolev-power equation above. Ciraolo and
Li~\cite{CiraoloLi2024} classified solutions of
$-Q_Nw=e^w$ in $\RN$ with $\int_{\RN}e^w\,\d x<\infty$,
including nonreversible anisotropies, and resolved
the planar uniqueness conjecture of Wang and
Xia~\cite[Conjecture~6.1]{WangXia2012}. More recently, Ciraolo, Esposito,
and Li~\cite{CiraoloEspositoLi2026} developed a
$P$-function proof of the classification for
\[
 -\Delta_N w=|x|^{N\alpha}e^w\quad\text{ in }\,\RN,\quad
 \int_{\RN}|x|^{N\alpha}e^w\,\d x<\infty.
\]
Their method recovers the planar classification of Prajapat and
Tarantello~\cite{PrajapatTarantello2001} for $\alpha>-1$ and establishes the higher-dimensional classification for $N\ge3$ and
$-1<\alpha\le0$. For $\alpha<0$ the solutions are radial about the origin;
for $\alpha=0$ translations are also possible. The weight is positive
away from zero and may be singular at zero; this setting differs from
the $C^1$ sign-changing prescribed functions considered below.
Entire solutions also occur as rescaled profiles in blow-up problems for
the Finsler $N$-Laplacian; see, e.g.,
\cite{WangXia2012,XieGong2016,CiraoloLi2024} and the references therein.
Related geometric questions for the Finsler $p$- and $N$-Laplacians
include capacitary characterizations of Wulff shapes
\cite{BianchiniCiraolo2018,CiraoloLi2022} and anisotropic
Moser--Trudinger inequalities~\cite{ZhouZhou2019}.

In this paper, we consider the critical Finsler--Liouville equation
\be \label{maineq}
 -Q_Nu=V(x)e^{Nu}\quad\text{ in }\, \RN,\quad N\ge2,
\ee
where $V$ may change sign and $Q_N$ is given by \eqref{F-Laplacian} with $p=N$. We call $V$ the \emph{prescribed function}
throughout; the term \emph{prescribed curvature} is used when a geometric
curvature is identified with $V$, as in the planar case below.

Throughout the paper, we assume $N\ge2$ and that $F:\RN\to[0,\infty)$ satisfies
\begin{equation*}\tag{$*$}\label{ass:F}
\begin{aligned}
 &F\in C^2(\RN\setminus\{0\}),\quad F\text{ is convex},\quad
   F(\xi)>0\quad(\xi\ne0),\\
 &F(t\xi)=|t|F(\xi)\quad(t\in\R,\ \xi\in\RN),\\
 &\Big\langle D^2\Big(\frac{F^2}{2}\Big)(\xi)\eta,\eta\Big\rangle
   \ge\lam_F|\eta|^2\quad(\xi\ne0,\ \eta\in\RN)
\end{aligned}
\end{equation*}
for some $\lam_F>0$. Here $D$ and $D^2$ denote the gradient and Hessian,
and $\langle\cdot,\cdot\rangle$ is the Euclidean inner product.
The identity $F(-\xi)=F(\xi)$ is called \emph{reversibility}.

The dual norm, Wulff balls, and unit Wulff volume are
\[
 F^0(x)=\sup_{\xi\ne0}\frac{x\cdot\xi}{F(\xi)},\quad
 W_r(x_0)=\{x:F^0(x-x_0)<r\},\quad W_r=W_r(0),\quad
 W_F=W_1,\quad\kap _N=|W_1|.
\]
The dual norm also satisfies $F^0(-x)=F^0(x)$. We reserve $B_r(x_0)$ for Euclidean balls, set $B_r=B_r(0)$ and
$\mathbb S^{N-1}=\pa  B_1$, and write $|E|$ for Lebesgue measure.

Set $\Phi_N(\xi)=F(\xi)^N/N$ and $a_N=D\Phi_N$, with $a_N(0)=0$.
For $F(\xi)=|\xi|$, equation~\eqref{maineq} reduces to the Euclidean
$N$-Liouville equation. Its planar interpretation follows from the
conformal curvature law
$-\Delta_{g_0}\varphi+K_{g_0}=K_g e^{2\varphi}$ for $g=e^{2\varphi}g_0$
on a surface, where $K_g$ and $\d A_g$ denote Gaussian curvature and
area measure. For $N=2$ and $g_0=|\d x|^2$, $V=K_g$, and finite absolute
mass means $\int_{\R^2}|V|e^{2u}\,\d x=\int_{\R^2}|K_g|\,\d A_g<\infty$.
No finite-area assumption is imposed.
For positive constant $V$, solutions with $\int_{\R^2}e^{2u}\,\d x<\infty$
represent round spherical metrics in stereographic coordinates~\cite{ChenLi1991}.
For $N=2$ and $F(\xi)^2=\xi^TA\xi$, $A=A^T>0$, one also has $V=K_g$ for
$g=e^{2u}\sum_{i,j=1}^2(A^{-1})_{ij}\,\d x_i\,\d x_j$.
For nonquadratic $F$, we use the conformal invariance of the critical
energy explained in Section~\ref{sec:mass}, without identifying $V$
with an intrinsic Finsler curvature.

\begin{samepage}
The normalization $w=Nu+(N-1)\ln N$ gives $-Q_Nw=Ve^w$.
For positive constant $V$ with $\int_{\RN}Ve^{Nu}\,\d x<\infty$, \cite[Proposition~4.1]{CiraoloLi2024} gives
\[
 \int_{\RN}Ve^{Nu}\,\d x
 =N\kap _N\Big(\frac{N}{N-1}\Big)^{N-1}.
\]
\end{samepage}

Nonconstant prescribed functions lead to different questions. In the Euclidean plane,
Chen--Li~\cite{ChenLi1993} and Cheng--Lin~\cite{ChengLin1997} studied
prescribed-curvature equations and their asymptotic behavior.
Struwe~\cite[Theorem~1.3]{Struwe2020} proved nonexistence for
$V(x)=1+x\cdot Ax$, with $A$ symmetric and negative definite, among
bounded-above entire solutions whose induced metrics have finite area
and finite integrated curvature.
M. Li~\cite[Theorems~1.2 and~1.3]{Li2024} proved nonexistence for
polynomials nonincreasing along rays and for prescribed functions with cone
lower and polynomial upper bounds of the same degree. The comparisons
with our results are given in Remarks~\ref{rem:mass-comparison},
\ref{rem:rigidity-comparison}, and~\ref{rem:polynomial-comparison}.
On complete surfaces with nonnegative Ricci curvature, Ciraolo, Farina,
and Gatti~\cite{CiraoloFarinaGatti2026} obtained classification and rigidity
under asymptotic lower bounds on the solution.

We study bounded-above entire solutions with
$\int_{\RN}|V|e^{Nu}\,\d x<\infty$.
For a variable $V$ that may change sign, this integral does not determine
$\int_{\RN}Ve^{Nu}\,\d x$, and the constant-coefficient mass formula no longer applies.
It is natural to  address the following two questions for~\eqref{maineq}.
\begin{itemize}
\item[(Q1)] Does lower cone growth of $|V|$ give a sharp positive signed-mass
bound without upper growth or monotonicity assumptions?
\item[(Q2)] Which additional growth conditions give annular gradient
convergence and Pohozaev boundary limits sufficient for rigidity under
$x\cdot\nabla V\le0$?
\end{itemize}
The results below answer (Q1) by a capacity argument and (Q2) by
comparison with anisotropic harmonic functions.

We assume $V\in C^1(\RN)$ and consider weak solutions of \eqref{maineq} in the class
\be \label{eq:basic-class}
 u\in C^1_{loc}(\RN)\cap W^{1,N}_{loc}(\RN),\quad
 \sup_{\RN}u<\infty,\quad
 \int_{\RN}|V|e^{Nu}\, \d x<\infty.
\ee
The weak formulation is
\[\int_{\RN} a_N(\nabla u)\cdot\nabla\phi\,\d x=\int_{\RN}Ve^{Nu}\phi\,\d x,\quad  \forall\, \phi\in C_c^\infty(\RN).\]
 For any real-valued function $g$ we write $g_+:=\max\{g,0\}$ and $g_-:=\max\{-g,0\}$.

\begin{defn}
For a growth exponent $\gamma>-N$, the lower cone-growth condition means that there exist a nonempty open cone $\Gamma\subset\RN$ and constants $c_0,R_0>0$ such that
\be \label{eq:cone-growth}
 |V(x)|\ge c_0|x|^\gamma,
 \quad \forall\, x\in\Gamma\ \text{ with }|x|\ge R_0.
\ee
The independent polynomial upper bound used below is
\be \label{eq:polynomial-upper}
 |V(x)|\le C_0(1+|x|)^\si ,
 \quad \forall\, x\in\RN
\ee
for some finite $\si \ge0$ and $C_0>0$. The exponents $\gamma$ and $\si $ may be different.
\end{defn}

The following theorem answers (Q1) under \eqref{eq:cone-growth}.

\begin{thm}\label{thm:threshold}
Assume \eqref{eq:cone-growth}. Then every solution in \eqref{eq:basic-class} has positive signed mass and satisfies
\be \label{eq:threshold}
 M:=\int_{\RN}Ve^{Nu}\, \d x>0,\quad
 \gamma_0:=\Big(\frac{M}{N\kap _N}\Big)^{1/(N-1)}
 \ge1+\frac\gamma N.
\ee
No global upper bound on $V$ or monotonicity assumption is needed.
\end{thm}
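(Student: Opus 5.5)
The plan is to compare the flux of $a_N(\nabla u)$ through large Wulff spheres with the growth of $u$ on the cone $\Gamma$, where finiteness of $\int|V|e^{Nu}$ forces $u$ to decay quickly.

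\textbf{Step 1: flux identity.} I would test the weak formulation with a cutoff $\phi_R$, radial in $F^0$: $\phi_R=1$ on $W_R$, decaying $N$-harmonically (logarithmically) to $0$ on $W_{R^2}\setminus W_R$. The right-hand side tends to $M$ by dominated convergence, since $|V|e^{Nu}\in L^1$. Hence the averaged flux
\[
 m(r):=\int_{\pa W_r}a_N(\nabla u)\cdot\nu\,\d\mathcal H^{N-1}
\]
satisfies $-m(r)\to M$ in a logarithmically averaged sense.

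\textbf{Step 2: capacity and Hölder bound.} Write $\bar u(r)$ for the average of $u$ on $\pa W_r$, or better $\sup$-type averages. Hölder's inequality together with the anisotropic isoperimetric/capacity identity for Wulff annuli (the $N$-capacity of $W_r\setminus W_s$ is $N\kap_N(\ln(r/s))^{1-N}$) gives
\[
 \int_{W_{R_2}\setminus W_{R_1}} F(\nabla u)^N\,\d x \;\ge\; \frac{\big(\operatorname{osc}\text{-type drop of } u\big)^N}{(N\kap_N)^{-1}(\ln(R_2/R_1))^{N-1}}.
\]
Conversely, the energy is controlled by the flux times the drop. Combining the two yields an upper bound on the decay rate: on average in $\ln r$,
\[
 u(x)\ge -\big(\gamma_0+o(1)\big)\ln|x|
\]
on sets of positive proportion on spheres, in particular on a sub-cone of $\Gamma$. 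To pass from averages to pointwise control on the cone, I would use a Harnack inequality for $u$ on annuli, using $\sup u<\infty$ and the smallness of the tail mass, applied to $u-\bar u(r)$. This step needs $M>0$ first; if $M\le0$, the same argument shows $u$ does not decay at all along a sequence of annuli.

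\textbf{Step 3: contradiction via the cone.} Integrate $|V|e^{Nu}\ge c_0|x|^{\gamma}e^{-N(\gamma_0+\var)\ln|x|}$ over $\Gamma\cap(W_{2r}\setminus W_r)$. This gives annular contributions of size $\gtrsim r^{N+\gamma-N\gamma_0-N\var}$. Summing over dyadic $r$, finiteness forces $N+\gamma-N\gamma_0\le0$, i.e. $\gamma_0\ge1+\gamma/N$. Since $\gamma>-N$, we have $1+\gamma/N>0$, so the case $M\le0$, where $u\ge -o(\ln r)$ on the cone, contradicts integrability directly. This gives $M>0$.

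\textbf{Main obstacle.} The hardest step is the passage in Step 2 from the averaged (capacitary) lower bound on $u$ to a pointwise lower bound on a positive fraction of the cone $\Gamma$ in each annulus, without upper bounds on $V$. The equation's right-hand side is uncontrolled pointwise; only its $L^1$ tail is small. My first attempt would be to split $u=h+w$ on each annulus, with $h$ $Q_N$-harmonic with the same boundary data. Here $w$ is controlled in weak-$L^{N/(N-1)}$-type or BMO-type norms by the small tail mass $\int_{|x|>r}|V|e^{Nu}$. The function $h$ obeys Harnack, which makes $\{u\ge h-\var\ln r\}$ occupy most of the annulus.
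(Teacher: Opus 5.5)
Your overall architecture matches the paper's: integrability on the cone forces $v=\sup u+1-u$ to be at least about $(1+\gamma/N)\ln R$, while flux and Wulff capacity force it to be at most about $\gamma_0\ln R$. However, Step~2 does not hold as stated.

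\textbf{Step 2 on Wulff annuli.} There ``energy $\lesssim$ flux $\times$ drop'' is not an identity. Testing with $u$ produces boundary terms $\int_{\pa W_r}u\,a_N(\nabla u)\cdot\nu\,\d\mathcal H^{N-1}$. These factor as (average of $u$)$\times$(flux) only if $u$ is constant on $\pa W_r$, and the discrepancy is controlled only by the $N$-energy of $u$ on shells. Nothing in the hypotheses bounds that energy. With no upper bound on $V$, $u$ can have downward spikes of tiny mass and arbitrarily large energy: take the perturbations $w_j$ of Proposition~\ref{prop:annular-subsequence-essential} with parameters chosen so that $\sum m_j<\infty$ but $m_j^2L_j\to\infty$. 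On annuli containing such spikes your inequality is false.

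The sup-type variant fails too. The capacity bound needs $v$ below one level on an inner Wulff ball and above another near the outer sphere, i.e.\ control of $\inf_{\pa W_R}v$. You do not have this, since thin low-$v$ channels are not excluded pointwise.

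The paper abandons Wulff spheres and works on sublevel sets of $v$, where the flux is exact:
\[
G(t)=\int_{\{v<t\}}f\,\d x\ge0 \text{ for a.e.\ } t,\qquad G(t)\to M,\qquad \int_{\{s<v<t\}}F(\nabla v)^N\,\d x=\int_s^tG(\tau)\,\d\tau .
\]
Truncating between the two levels discards the spikes. To compare with $\capFN(\W_r,\W_{\Lambda R})$, it proves a topological fact: for $t_R$ slightly below $c(R)$, the component of $\{v<t_R\}$ containing $0$ lies in $B_{2R}$ (Lemmas~\ref{lem:annular-capacity} and~\ref{lem:bounded-sublevel-component}). This uses the critical-exponent fact that connected sets crossing an annulus have $W^{1,N}$-capacity bounded below. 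The same sublevel-set identity gives $M\ge0$ from $G\ge0$, and excludes $M=0$ because the capacity chain would then force $c(R)=o(\ln R)$. Your ``same argument'' for $M\le0$ does not supply either point.

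\textbf{Passage from averages to the cone.} Using Harnack here would lose the sharp constant. For a positive $F$-$N$-harmonic $h$, Harnack gives $\sup h\le C_H\inf h$ on a shell with a fixed $C_H>1$, so you would only obtain $C_H\gamma_0\ge1+\gamma/N$. The smallness of $w$ in your splitting is fine, but you would need the oscillation of $h$ on the shell to be $o(\ln R)$, which plain Harnack does not give.

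The missing ingredient is the global logarithmic energy bound obtained by testing with $\eta^Nv^{1-N}$:
\[
(N-1)\int_{\RN}F(\nabla\ln v)^N\,\d x=\int_{\RN}fv^{1-N}\,\d x\le\int_{\RN}f_+\,\d x .
\]
It gives $\ln v(R\cdot)-m(R)\to0$ in $W^{1,N}$ on fixed annuli, hence $v(R\cdot)/c(R)\to1$ in every $L^p$. That is exactly the $1+o(1)$ comparability between the cone portion and the whole annulus. It also supplies the capacity smallness of $\{v(R\cdot)\le(1-\var)c(R)\}$ used in the topological step.

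Finally, Jensen's inequality on a ball $E$ in the rescaled cone,
\[
\int_Ee^{-Nv(Ry)}\,\d y\ge|E|\exp\Big(-N\dashint_Ev(Ry)\,\d y\Big),
\]
shows that only the average of $v$ over $E$ matters. So no pointwise or positive-proportion lower bound for $u$ on the cone is needed, and the ``main obstacle'' you identified disappears.
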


\begin{rem}\label{rem:mass-comparison}
(1) For $N=2$ and $F(\xi)=|\xi|$, inequality~\eqref{eq:threshold} reads
$M/(2\pi)\ge1+\gamma/2$. Li~\cite[Lemma~2.1]{Li2024} obtains the same
numerical bound for a logarithmic integral equation under radial
monotonicity and polynomial cone growth with $\gamma\ge2$.
Theorem~\ref{thm:threshold} applies without monotonicity or upper growth,
for every $\gamma>-N$ and for the nonlinear anisotropic operator.
The nonlinear step is the capacity comparison in Section~\ref{sec:mass},
which replaces an additive logarithmic representation.

(2)  Proposition~\ref{prop:equality-example} gives equality for every
$\gamma>-N$, with a sign-changing $V$ and the unbounded logarithmic
remainder in~\eqref{eq:equality-profile}.
Remark~\ref{rem:critical-cone-exponent} proves that $\gamma>-N$ cannot be
weakened if positive signed mass is to follow. The sharpness is within
the class of variable prescribed functions, not positive constants.
\end{rem}

The mass estimate alone does not determine the Pohozaev boundary term. To address (Q2), we prove strong annular gradient convergence for a sequence of dilations under \eqref{eq:polynomial-upper}. Set $\mathcal A=W_2\setminus\overline{W_1}$ and define the Pohozaev vector field
\[
 \mathcal P_u=(x\cdot\nabla u)a_N(\nabla u)-x\Phi_N(\nabla u).
\]
We use $\dashint_A g\,\d x:=|A|^{-1}\int_A g\,\d x$ for integral averages. On a sufficiently regular boundary, $\nu$ denotes the outward Euclidean unit normal and $\mathcal H^{N-1}$ the $(N-1)$-dimensional Hausdorff measure. The boundary limits below will be used in Theorem~\ref{thm:rigidity}.

\begin{thm}\label{thm:annular-pohozaev}
Under the assumptions of Theorem~\ref{thm:threshold}, suppose also that \eqref{eq:polynomial-upper} holds. There exist $R_j\to\infty$ and constants
\[
 \alpha_j=\dashint_{\mathcal A}  (u(R_jy)+\gamma_0\ln  F^0(y) )\, \d y
\]
such that
\[
 u(R_j\,\cdot)-\alpha_j\to -\gamma_0\ln  F^0
  \quad\text{strongly in }\, W^{1,N}_{loc}(\RN\setminus\{0\})\quad\text{ as }\,j\to\infty.
\]
For every $0<a<b<\infty$, as $j\to\infty$, the same sequence satisfies
\begin{equation}\label{eq:auto-annular-gradient-error}
 \int_{W_{bR_j}\setminus W_{aR_j}}
 F(\nabla u+\gamma_0\nabla\ln F^0)^N\,\d x\to0,\quad
 \int_{W_{bR_j}\setminus W_{aR_j}}F(\nabla u)^N\,\d x
 \to N\kap_N\gamma_0^N\ln\frac ba.
\end{equation}
There also exist radii $r_j\in[R_j,2R_j]$ such that, as $j\to\infty$,
\begin{equation}\label{eq:PB-boundary}
 r_j\int_{\pa W_{r_j}}|Ve^{Nu}|\,\d\mathcal H^{N-1}\to0,\quad
 \int_{\pa W_{r_j}}\mathcal P_u\cdot\nu\,\d\mathcal H^{N-1}
 \to(N-1)\kap_N\gamma_0^N.
\end{equation}
\end{thm}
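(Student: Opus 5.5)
We rescale at infinity, pass to a limit by compactness, identify the limit as a multiple of the anisotropic fundamental solution, upgrade to strong convergence, and read off the Pohozaev boundary terms.

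\textbf{Rescaling.} For $R>0$ set $u_R(y)=u(Ry)$. Then $u_R$ solves
\[
 -Q_Nu_R=V_R e^{Nu_R},\qquad V_R(y):=R^NV(Ry)\,e^{N(u(Ry)-u_R(y))}=R^NV(Ry),
\]
so that $\int|V_R|e^{Nu_R}\,\d y=\int|V|e^{Nu}\,\d x$. On the annulus $W_{b}\setminus W_{a}$ this mass tends to zero as $R\to\infty$, since it equals the mass of $|V|e^{Nu}$ on $W_{bR}\setminus W_{aR}$.

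\textbf{Compactness on annuli.} To pass to the limit, I would first show that $\int_{W_{bR}\setminus W_{aR}}F(\nabla u)^N\,\d x$ is bounded uniformly in $R$.
\begin{itemize}
\item Test the equation with truncations of $u-\alpha$ times cutoffs.
\item Alternatively, use the capacity comparison from the proof of Theorem~\ref{thm:threshold}, which already controls the flux $\int_{\pa W_r}a_N(\nabla u)\cdot\nu\,\d\mathcal H^{N-1}$. This flux tends to $M=N\kap_N\gamma_0^{N-1}$ as $r\to\infty$, because it equals $\int_{W_r}Ve^{Nu}\,\d x$.
\end{itemize}
Next, the Serrin/Trudinger-type oscillation estimate for $N$-Laplace-type equations with right-hand side small in $L^1$ gives bounded oscillation of $u_R$ on annuli. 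Here \eqref{eq:polynomial-upper} together with $\sup u<\infty$ and the cone-growth integrability are used to put $V_Re^{Nu_R}$ in $L^{1+\delta}$ on annuli with small norm. Since $R^{N+\sigma}e^{Nu(Ry)}$ must be controlled, I first need $u(x)\le -\gamma_0\ln|x|+o(\ln|x|)$, obtained from the flux asymptotics. Then $u_R-\alpha_R$ is bounded in $W^{1,N}$ on compact annuli. Along a sequence $R_j$ it converges weakly to some $h$ that is $Q_N$-harmonic on $\RN\setminus\{0\}$. The flux of $h$ through every $\pa W_r$ equals the limiting flux $M$.

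\textbf{Identifying the limit and strong convergence.} The limit $h$ has bounded oscillation on dyadic annuli and constant flux $M$. By the Liouville/Bôcher-type classification for $Q_N$-harmonic functions with an isolated singularity (Serrin, Kichenassamy--Véron in the anisotropic form), $h=-\gamma_0\ln F^0+c$. Here $\gamma_0$ is fixed by the flux, since
\[
 a_N(-\gamma_0\nabla\ln F^0)\cdot\nu\ \text{integrated over}\ \pa W_r\ \text{equals}\ N\kap_N\gamma_0^{N-1},
\]
and the normalization $\alpha_j$ forces $c=0$. I expect this identification to be the main obstacle. Ruling out other singular $Q_N$-harmonic behaviours at $0$ and at $\infty$ needs two-sided estimates of $u_R-\alpha_R$. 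My first approach would be comparison with $\pm\gamma_0\ln F^0$ plus constants on large Wulff annuli, using the decay of the right-hand side. Strong $W^{1,N}$ convergence then follows from monotonicity of $a_N$. I test the difference of the equations with $(u_{R_j}-\alpha_j-h)\eta$, and the right-hand side contribution vanishes by the small $L^1$ norm and the $L^\infty$ bound. This gives the first limit in \eqref{eq:auto-annular-gradient-error} after rescaling. The second limit is a direct computation using $F(\nabla\ln F^0)=1/F^0$ and the coarea formula on Wulff shells.

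\textbf{Boundary terms.} For the first part of \eqref{eq:PB-boundary}, note that $\int_{W_{2R_j}\setminus W_{R_j}}|V|e^{Nu}\to0$. By the coarea formula (with $|\nabla F^0|$ bounded above and below) and the mean value theorem, I choose $r_j\in[R_j,2R_j]$ with $r_j\int_{\pa W_{r_j}}|V|e^{Nu}\le C\int_{W_{2R_j}\setminus W_{R_j}}|V|e^{Nu}\to0$. I also want $\mathcal P_u$ on $\pa W_{r_j}$ to converge to $\mathcal P_{h}$. The same averaging applied to $\int F(\nabla u+\gamma_0\nabla\ln F^0)^N$ and $\int F(\nabla u)^N$ on the shell lets me choose $r_j$ so that both good properties hold, say on a set of radii of measure at least $R_j/2$ each. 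Then $\int_{\pa W_{r_j}}\mathcal P_u\cdot\nu$ converges to the same integral for $h=-\gamma_0\ln F^0$ on $\pa W_1$, which is scale invariant. For $h$,
\[
 x\cdot\nabla h=-\gamma_0,\qquad a_N(\nabla h)\cdot\nu\ \text{integrates to}\ -N\kap_N\gamma_0^{N-1},\qquad \int_{\pa W_1}\Phi_N(\nabla h)\,x\cdot\nu=\frac{\gamma_0^N}{N}\cdot N\kap_N.
\]
Hence the limit is $N\kap_N\gamma_0^N-\kap_N\gamma_0^N=(N-1)\kap_N\gamma_0^N$.
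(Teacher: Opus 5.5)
\textbf{Gap: the compactness step.} You claim that small absolute mass on $W_{bR}\setminus W_{aR}$, upgraded to a small $L^{1+\delta}$ norm via \eqref{eq:polynomial-upper}, bounds the oscillation of $u_R$ and gives a uniform $W^{1,N}$ bound for $u_R-\alpha_R$ on annuli. Monotonicity would then give strong convergence. This step fails for three reasons.

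First, for $N$-Laplace-type operators, small $L^1$ data control the deviation from the $F$-$N$-harmonic replacement only through exponential integrability (Brezis--Merle), not in $L^\infty$. Second, the $L^{1+\delta}$ upgrade needs $R^{N+\sigma}e^{Nu(Ry)}$ to be small, hence a pointwise bound $u\le-\gamma_0\ln|x|+o(\ln|x|)$. Flux asymptotics do not give this; only the $L^p$ statement \eqref{eq:multiplicative-convergence} is available. Even granting it, you would need $\sigma<N(\gamma_0-1)$, whereas $\sigma$ is independent of $\gamma$ and $\gamma_0$. Third, the uniform bounds are false along general scales. In Proposition~\ref{prop:annular-subsequence-essential} ($N=2$, $\gamma=0$, $\sigma=5$, $\gamma_0=1$), at the scales $R_j$ of that example the absolute mass of $f$ on $\{R_j<|x|<2R_j\}$ tends to zero. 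Yet $u$ oscillates there by roughly $\sqrt{L_j}\to\infty$, and $\mathfrak G_2(R_j)\to2\pi$.

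So the scales must be selected, and no uniform-in-$R$ annular bound can be proved. Your monotonicity step needs $\int f_R(u_R-\alpha_R-h)\eta\to0$, which requires exactly the missing $L^\infty$ control. Also, since $a_N$ is nonlinear, the flux of a merely weak limit need not equal the limit of the fluxes.

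\textbf{Missing ingredients.} These make up the paper's proof.
\begin{enumerate}
\item Lemma~\ref{lem:small-mass-annuli} selects $R_j=2^{n_j}$ so that the mass $\delta_j$ of the $O(\ln n_j)$ neighbouring dyadic annuli satisfies $n_j\delta_j\to0$.
\item $v(R_j\,\cdot)$ is compared with its $F$-$N$-harmonic replacement $h_j$ on $\Omega_j$. In Lemma~\ref{lem:nonlinear-comparison}, the sup norm of the right-hand side enters only through $\ln(e+\Lambda|\Omega|/\delta)\lesssim n_j$, so $\mathcal Z_j\to0$. This is the only place \eqref{eq:polynomial-upper} is used.
\item \eqref{eq:uniform-convergence-N}, \eqref{eq:uniform-c-upper-N} and Serrin's estimate give only the crude bound $\omega_k(h_j)\le Cn_j$.
\item The two-sided logarithmic control you flagged as the main obstacle comes from Lemma~\ref{lem:annular-growth}. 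Dividing \eqref{eq:boundary-master-N} by $M_m^{N-1}$ shows that an oscillation $\ge K$ would have to double every $J_F$ dyadic scales. Over the available $L_j/2$ scales this produces growth of order $n_j^{A\ln2/(2J_F)}$, contradicting $Cn_j$.
\end{enumerate}
Only with the resulting uniform bound $\omega_k(h_j)\le K$ do Lemma~\ref{lem:local-C1-compactness} and the Ciraolo--Li rigidity give $h_j-h_j(y_*)\to\gamma_0\ln F^0$ in $C^1_{loc}$. The flux of the limit is identified through $\mathcal Q_j\to M$, and the gradient error of $u$ vanishes because $\mathcal Z_j\to0$.

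Your final boundary-term computation is fine once this is in place, apart from a sign slip: the flux of $-\gamma_0\ln F^0$ is $-N\kappa_N\gamma_0^{N-1}$.
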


\begin{rem}\label{rem:annular-comparison}
For a positive constant $V$, Ciraolo and
Li~\cite[Proposition~3.1]{CiraoloLi2024} obtain a bounded logarithmic
remainder and
$|x|\,|\nabla(u+\gamma_0\ln F^0)|\to0$ as $|x|\to\infty$.
Theorem~\ref{thm:annular-pohozaev} has a weaker asymptotic conclusion for
a larger class of prescribed functions. Propositions~\ref{prop:equality-example}
and~\ref{prop:annular-subsequence-essential} show why neither a bounded
remainder nor convergence for all radii follows in general.
The exponential comparison and harmonic rigidity results used below are
\cite[Lemma~2.3 and Proposition~6.8]{CiraoloLi2024}; the new argument
establishes compactness for the signed equation. The boundary limits do
not quantize $M$, as the masses in~\eqref{eq:equality-mass} vary continuously.
\end{rem}

Theorem~\ref{thm:rigidity} below uses the limits~\eqref{eq:PB-boundary},
which we formulate separately from~\eqref{eq:polynomial-upper}.
\begin{defn}\label{def:pohozaev-infinity}
A solution satisfying the hypotheses of Theorem~\ref{thm:threshold}
satisfies the \emph{Pohozaev condition at infinity} if there exists a
sequence $r_j\to\infty$ for which both limits in~\eqref{eq:PB-boundary}
hold, with $\gamma_0$ given by~\eqref{eq:threshold}. The same radii must
satisfy both limits; a subsequence may be chosen strictly increasing.
\end{defn}

The limits \eqref{eq:PB-boundary} give the global Pohozaev identity. Writing $D(x):=x\cdot\nabla V(x)$ and using $D_+=\max\{D,0\}$, $D_-=\max\{-D,0\}$, Proposition~\ref{prop:global-pohozaev} shows, whenever at least one of $D_+e^{Nu}$ and $D_-e^{Nu}$ is integrable, that both are integrable and
\be \label{eq:pohozaev-intro}
 \gamma_0^{N-1} ((N-1)\gamma_0-N )
 =\frac1{N\kap _N}\int_{\RN}D(x)e^{Nu}\, \d x.
\ee
In particular, a radial derivative of one sign needs no separate integrability assumption. Combining \eqref{eq:pohozaev-intro} with \eqref{eq:threshold} gives Theorem~\ref{thm:rigidity}. Here $x\cdot\nabla V\le0$ means
\[
 \frac{\d}{\d r}V(r\theta)
 =\frac{(r\theta)\cdot\nabla V(r\theta)}{r}\le0,
 \quad r>0,\ \theta\in\mathbb S^{N-1};
\]
no radial symmetry of $V$ is assumed.

\begin{thm}\label{thm:rigidity}
Assume \eqref{eq:cone-growth} with
\[
 \gamma\ge\frac{N}{N-1}.
\]
Assume in addition that
\[
 x\cdot\nabla V(x)\le0, \quad \forall\, x\in\RN.
\]
Then no solution in \eqref{eq:basic-class} satisfies the Pohozaev condition at infinity (Definition~\ref{def:pohozaev-infinity}).
\end{thm}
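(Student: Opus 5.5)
The plan is to argue by contradiction and combine the global Pohozaev identity \eqref{eq:pohozaev-intro} with the signed-mass bound of Theorem~\ref{thm:threshold}. Suppose $u$ is a solution in \eqref{eq:basic-class} satisfying the Pohozaev condition at infinity along radii $r_j\to\infty$. Since $D(x)=x\cdot\nabla V(x)\le0$ everywhere, we have $D_+\equiv0$, so $D_+e^{Nu}$ is trivially integrable. Proposition~\ref{prop:global-pohozaev} then applies and yields the integrability of $D_-e^{Nu}$ together with \eqref{eq:pohozaev-intro}. Its right-hand side equals $-\frac1{N\kap_N}\int_{\RN}D_-e^{Nu}\,\d x\le0$. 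Since Theorem~\ref{thm:threshold} gives $M>0$, hence $\gamma_0>0$, we obtain $(N-1)\gamma_0\le N$, that is, $\gamma_0\le N/(N-1)$.

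On the other hand, \eqref{eq:threshold} gives $\gamma_0\ge 1+\gamma/N$. If $\gamma\ge N/(N-1)$, then
\[
 \gamma_0\ge 1+\frac{1}{N-1}=\frac{N}{N-1}.
\]
The two bounds force $\gamma_0=N/(N-1)$ and $\int_{\RN}D_-e^{Nu}\,\d x=0$. Because $u$ is continuous and $e^{Nu}>0$, this gives $D\equiv0$, so $V$ is constant along every ray: $V(r\theta)=V(0)$ for all $r>0$ and $\theta$. Hence $V$ is a constant $V(0)$. The cone condition \eqref{eq:cone-growth} with $\gamma>0$ requires $|V(x)|\ge c_0|x|^\gamma\to\infty$ along $\Gamma$, which contradicts $V$ being constant. (If $\gamma>N/(N-1)$, the contradiction $\gamma_0>N/(N-1)\ge\gamma_0$ is already immediate.)

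The main point needing care is that Proposition~\ref{prop:global-pohozaev} is applicable under exactly the hypotheses present. Specifically, the boundary limits \eqref{eq:PB-boundary} must hold along the radii $r_j$ provided by the Pohozaev condition, and the one-sided integrability must come for free. Both are built into Definition~\ref{def:pohozaev-infinity} and the statement of the proposition, so this step should be routine. The remaining checks are the strictness and equality cases above. I expect no real obstacle beyond verifying that $\gamma\ge N/(N-1)>0$ excludes constant $V$.
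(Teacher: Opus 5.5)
Your proposal is correct and follows the paper's proof: both combine Proposition~\ref{prop:global-pohozaev} (one-sided integrability is automatic since $D_+\equiv0$) with the lower bound $\gamma_0\ge1+\gamma/N\ge N/(N-1)$ from Theorem~\ref{thm:threshold}. Both also rule out a vanishing weighted derivative because it would force $V\equiv V(0)$, which is incompatible with cone growth for $\gamma>0$. The only difference is the order of the steps: the paper first shows the integral is strictly negative, giving $\gamma_0<N/(N-1)$, while you derive the forced equality case and then exclude it.
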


Theorems~\ref{thm:annular-pohozaev} and~\ref{thm:rigidity} give Corollary~\ref{cor:unconditional-rigidity} without an independent boundary assumption.

\begin{cor}\label{cor:unconditional-rigidity}
Under the assumptions on $V$ in Theorem~\ref{thm:rigidity}, if \eqref{eq:polynomial-upper} holds, equation \eqref{maineq} has no solution in \eqref{eq:basic-class}.
\end{cor}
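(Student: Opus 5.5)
The corollary follows by chaining Theorem~\ref{thm:annular-pohozaev} into Theorem~\ref{thm:rigidity}, so the plan is to argue by contradiction. Suppose $V$ satisfies the hypotheses of Theorem~\ref{thm:rigidity}: the cone-growth condition \eqref{eq:cone-growth} with $\gamma\ge N/(N-1)$ and $x\cdot\nabla V\le0$ on $\RN$. Suppose also that \eqref{eq:polynomial-upper} holds, and that $u$ is a solution of \eqref{maineq} in the class \eqref{eq:basic-class}.

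First, the hypotheses of Theorem~\ref{thm:threshold} are exactly \eqref{eq:cone-growth} plus membership of $u$ in \eqref{eq:basic-class}, and the value of $\gamma$ plays no role there. Hence Theorem~\ref{thm:threshold} applies and gives $M>0$ and a well-defined $\gamma_0$ as in \eqref{eq:threshold}.

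Second, since \eqref{eq:polynomial-upper} holds in addition, Theorem~\ref{thm:annular-pohozaev} applies to $u$. It produces $R_j\to\infty$ and radii $r_j\in[R_j,2R_j]$, so in particular $r_j\to\infty$, along which both limits in \eqref{eq:PB-boundary} hold for the same $\gamma_0$. Passing to a subsequence, we may take $r_j$ strictly increasing. This is precisely the statement that $u$ satisfies the Pohozaev condition at infinity in the sense of Definition~\ref{def:pohozaev-infinity}.

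Third, Theorem~\ref{thm:rigidity} states that no solution in \eqref{eq:basic-class} satisfies this condition under the assumptions on $V$ made there. This contradicts the previous step, so no such $u$ exists.

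The only point needing care is that the hypotheses match exactly: the assumptions of Theorem~\ref{thm:annular-pohozaev} must be satisfied by the given $u$, and the constant $\gamma_0$ appearing in \eqref{eq:PB-boundary} must be the one defined in \eqref{eq:threshold}, which Definition~\ref{def:pohozaev-infinity} requires. Both are immediate from the statements, so there is no real obstacle; all the analytic content lies in the two theorems being invoked.
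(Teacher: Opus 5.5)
Your proposal is correct and follows the paper's proof: Theorem~\ref{thm:annular-pohozaev} supplies radii $r_j\in[R_j,2R_j]$ satisfying both limits in \eqref{eq:PB-boundary}, which is the Pohozaev condition at infinity. Theorem~\ref{thm:rigidity} then rules out any solution in \eqref{eq:basic-class}.
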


\begin{rem}\label{rem:rigidity-comparison}
Proposition~\ref{prop:monotone-sharpness} proves that the exponent
$N/(N-1)$ cannot be decreased: for every $0\le\gamma<N/(N-1)$, it gives
a solution with $x\cdot\nabla V<0$ for $x\ne0$ and
$c|x|^\gamma\le|V(x)|\le C|x|^\gamma$ near infinity.
In the Euclidean plane, Corollary~\ref{cor:unconditional-rigidity} has
the same lower exponent $\gamma\ge2$ as Li~\cite[Theorem~1.3]{Li2024},
but does not require $\si=\gamma$ in~\eqref{eq:polynomial-upper}. The other extensions
are the dimension and the anisotropy.

Proposition~\ref{prop:larger-upper-growth} allows the polynomial upper
bound to be replaced by
\be\label{eq:larger-upper-growth}
 \ln(e+|V(x)|)\le C[\ln(e+|x|)]^{N/(N-1)},\quad x\in\RN.
\ee
For the full annular conclusion, the power $2$ when $N=2$ is optimal
by Remark~\ref{rem:upper-growth-planar}. Its optimality for $N>2$, or
for nonexistence rather than annular convergence, is not established here.
\end{rem}

Without~\eqref{eq:polynomial-upper} or~\eqref{eq:larger-upper-growth}, the limits~\eqref{eq:PB-boundary} also follow from finite exterior gradient error,
\[
 \cG_\infty^{(N)}(u)
 =\int_{\{F^0>1\}}F(\nabla u+\gamma_0\nabla\ln  F^0)^N\, \d x<\infty.
\]
Proposition~\ref{prop:finite-gradient-error} derives the Pohozaev condition at infinity from $\cG_\infty^{(N)}(u)<\infty$, independently of Theorem~\ref{thm:annular-pohozaev}. Combining it with Theorem~\ref{thm:rigidity} gives the following corollary.

\begin{cor}\label{cor:finite-gradient-error}
Under the assumptions of Theorem~\ref{thm:rigidity}, there is no solution in \eqref{eq:basic-class} with finite exterior gradient error.
\end{cor}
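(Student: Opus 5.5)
The plan is to argue by contradiction and combine the Pohozaev condition at infinity with Proposition~\ref{prop:finite-gradient-error} and Theorem~\ref{thm:rigidity}. Suppose $u$ is a solution in \eqref{eq:basic-class} with $\cG_\infty^{(N)}(u)<\infty$, where $V$ satisfies \eqref{eq:cone-growth} with $\gamma\ge N/(N-1)$ and $x\cdot\nabla V\le0$ on $\RN$. Since \eqref{eq:cone-growth} holds, Theorem~\ref{thm:threshold} applies: $M>0$ and $\gamma_0\ge1+\gamma/N$. Hence $\gamma_0$ is well defined, and so is the exterior gradient error $\cG_\infty^{(N)}(u)$, which involves $\gamma_0$.

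The key step is Proposition~\ref{prop:finite-gradient-error}. It gives radii $r_j\to\infty$ along which both limits in \eqref{eq:PB-boundary} hold, with the same $\gamma_0$. This is exactly the Pohozaev condition at infinity in Definition~\ref{def:pohozaev-infinity}. If I had to prove that proposition rather than cite it, I would proceed as follows.
\begin{itemize}
\item Finiteness of $\cG_\infty^{(N)}$ gives
\[
 \int_1^\infty\Big(\int_{\pa W_r}F(\nabla u+\gamma_0\nabla\ln F^0)^N\,\d\mathcal H^{N-1}\Big)\,\d r<\infty.
\]
Together with $\int_{\RN}|V|e^{Nu}\,\d x<\infty$, the coarea formula yields radii $r_j\to\infty$ at which $r$ times both boundary integrals tends to zero. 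This uses $\int^\infty \d r/r=\infty$, so an integrable function $g(r)$ satisfies $\liminf_{r\to\infty} rg(r)=0$.
\item On $\pa W_{r_j}$, write $\nabla u=-\gamma_0\nabla\ln F^0+e$, where $r_j\int_{\pa W_{r_j}}F(e)^N\,\d\mathcal H^{N-1}\to0$. Expand $\mathcal P_u\cdot\nu$ around the model profile. The model field $-\gamma_0\ln F^0$ contributes exactly $(N-1)\kap_N\gamma_0^N$, computed from $a_N(\nabla F^0)\cdot x=F^0$ and the Wulff-ball flux identity.
\item The error terms are bounded by Hölder's inequality, since $|x|\sim r_j$ and $|\nabla\ln F^0|\sim r_j^{-1}$ on $\pa W_{r_j}$.
\end{itemize}

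Once the Pohozaev condition at infinity is established, Theorem~\ref{thm:rigidity} says that no solution in \eqref{eq:basic-class} satisfies it under the present hypotheses on $V$. This contradicts the previous step, which proves the corollary.

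The main obstacle sits entirely inside Proposition~\ref{prop:finite-gradient-error}. The hardest part is controlling the cross terms in the expansion of $\mathcal P_u\cdot\nu$ when the error $e$ is small only in an averaged boundary sense. Because $a_N$ is nonlinear, I would use the $C^2$ bounds on $F$ away from the origin to get
\[
 |a_N(\xi+e)-a_N(\xi)|\le C\big(|\xi|^{N-2}|e|+|e|^{N-1}\big).
\]
Once the proposition is available, the corollary itself is a direct combination of results already stated.
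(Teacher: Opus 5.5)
Your proposal is correct and follows the paper's proof: Proposition~\ref{prop:finite-gradient-error} gives the Pohozaev condition at infinity, and Theorem~\ref{thm:rigidity} then rules it out. The paper proves that proposition through $\mathfrak G_N(R)\to0$ and the coarea selection in Proposition~\ref{prop:annular-gradient-error-PB}, which is essentially your sketch. One small slip there: the model flux uses $x\cdot\nabla F^0=F^0$ and $a_N(\nabla F^0)=x/F^0$, so for a general anisotropy $a_N(\nabla F^0)\cdot x=|x|^2/F^0$, not $F^0$; this does not affect the argument.
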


A concrete consequence concerns polynomial prescribed functions. If a nonconstant polynomial $P$ satisfies $x\cdot\nabla P\le0$, its leading degree $d$ is even and at least two, and \eqref{eq:cone-growth}--\eqref{eq:polynomial-upper} hold with $\gamma=\si=d$; see Section~\ref{sec:pohozaev}.

\begin{cor}\label{cor:polynomial}
Let $P$ be a nonconstant polynomial on $\RN$ such that $x\cdot\nabla P\le0$. Then $-Q_Nu=Pe^{Nu}$ has no solution in \eqref{eq:basic-class}. In particular, this applies to $V=1+P_m$, where $P_m\not\equiv0$ is a nonpositive homogeneous polynomial of degree $m\ge2$.
\end{cor}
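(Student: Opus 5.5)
Corollary~\ref{cor:polynomial} is a reduction to Corollary~\ref{cor:unconditional-rigidity}. It suffices to check three things for $V=P$: that $P\in C^1$ satisfies $x\cdot\nabla P\le0$; that \eqref{eq:polynomial-upper} holds with $\si=d:=\deg P$; and that \eqref{eq:cone-growth} holds with $\gamma=d\ge2\ge N/(N-1)$. The middle condition is immediate, since $|P(x)|\le C_0(1+|x|)^d$.

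The real work is the structural claim about monotone polynomials. I will decompose $P=\sum_{k=0}^d P_k$ into homogeneous parts. Euler's identity gives
\[
 x\cdot\nabla P=\sum_{k=1}^d kP_k.
\]
Fix $\theta\in\mathbb S^{N-1}$. Then $r\mapsto P(r\theta)=\sum_k P_k(\theta)r^k$ is a one-variable polynomial, nonincreasing on $(0,\infty)$. Letting $r\to\infty$ in $r\,\frac{\d}{\d r}P(r\theta)=\sum_k kP_k(\theta)r^k\le0$ forces $P_d(\theta)\le0$ whenever $d\ge1$. So $P_d\le0$ on the sphere, and since $P_d\not\equiv0$, $P_d$ is negative on some nonempty open subset $U$ of $\mathbb S^{N-1}$.

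\emph{$d$ is even.} If $d$ were odd, then $P_d(-\theta)=-P_d(\theta)$, so $P_d(-\theta)>0$ for $\theta\in U$, a contradiction. Hence $d$ is even. If $d\ge1$ then $d\ge2$, and $d\ge1$ holds because $P$ is nonconstant.

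\emph{Cone growth.} Shrinking $U$, pick a compact neighbourhood on which $P_d\le-2c_0<0$, and let $\Gamma$ be the open cone over the interior of $U$. For $x=r\theta\in\Gamma$,
\[
 |P(x)|\ge 2c_0r^d-\sum_{k<d}\|P_k\|_{L^\infty(\mathbb S^{N-1})}r^k\ge c_0r^d
\]
once $r\ge R_0$. This gives \eqref{eq:cone-growth} with $\gamma=d$.

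Since $d\ge2\ge N/(N-1)$ for all $N\ge2$, Corollary~\ref{cor:unconditional-rigidity} applies and yields nonexistence.

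For the particular case $V=1+P_m$ with $P_m\le0$ homogeneous of degree $m\ge2$, Euler's identity gives
\[
 x\cdot\nabla V=mP_m\le0,
\]
and $V$ is nonconstant because $P_m\not\equiv0$, so the general statement applies.

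There is no real obstacle here. The only delicate point is obtaining the open cone from $P_d\le0$, $P_d\not\equiv0$, and this follows from continuity of $P_d$ on the sphere.
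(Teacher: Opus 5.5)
Your proposal is correct and follows essentially the same route as the paper. Both decompose $P$ into homogeneous parts, use Euler's identity with $r\to\infty$ to get $P_d\le0$, exclude odd $d$ by oddness of $P_d$, and get the cone bound from uniform negativity of $P_d$ on a compact spherical patch plus the trivial polynomial upper bound. Both then invoke Corollary~\ref{cor:unconditional-rigidity} since $d\ge2\ge N/(N-1)$, and treat $V=1+P_m$ the same way via $x\cdot\nabla V=mP_m\le0$.
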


\begin{rem}\label{rem:polynomial-comparison}
The Euclidean planar case of Corollary~\ref{cor:polynomial} is Li's Theorem~1.2 in~\cite{Li2024}; the negative definite quadratic case was proved by Struwe~\cite[Theorem~1.3]{Struwe2020}. The extension in Corollary~\ref{cor:polynomial} is to all critical dimensions and fixed anisotropies, not a new planar polynomial theorem.
\end{rem}

The mass estimate also has a planar geometric consequence. In the Euclidean plane, $V$ is the Gaussian curvature of $g=e^{2u}|\d x|^2$, and the signed mass is its total curvature. Theorem~\ref{thm:threshold} and the Cohn--Vossen inequality~\cite[Satz~6]{CohnVossen1935} imply that this metric cannot be complete when $\gamma>0$; the threshold $\gamma=0$ is sharp. The argument and the examples are given in Section~\ref{subsec:curvature-sharpness}.

The difficulty in (Q1) is to estimate $\int_{\RN}Ve^{Nu}\,\d x$ directly:
except in the quadratic planar case, the operator is nonlinear, so
potentials for the positive and negative parts of $Ve^{Nu}$ cannot be added. For $v=b_u-u\ge1$, where $b_u=\sup_{\RN}u+1$, the logarithmic energy
identity~\eqref{eq:automatic-log-energy} gives
\eqref{eq:multiplicative-convergence}, and cone growth gives
\eqref{eq:cone-normalization-lower}. Convergence in $L^p$ alone does not
exclude a thin connected sublevel set crossing an annulus.
Lemma~\ref{lem:annular-capacity} and the small critical energy in
\eqref{eq:log-convergence} exclude such a component, giving an admissible
capacity truncation without regularity or convexity of its boundary.
The signed identity~\eqref{eq:sublevel-boundary} and exact Wulff
capacity~\eqref{eq:wulff-capacity} then yield~\eqref{eq:annular-scale-bounds}.

For (Q2), we compare $v$ with $F$-$N$-harmonic functions on annuli of
increasing logarithmic width. Exponential comparison
\cite[Lemma~2.3]{CiraoloLi2024} and strong monotonicity give
\eqref{eq:comparison-gradient} without uniformly elliptic linearization;
see also~\cite{BrezisMerle1991} and~\cite[Proposition~2.1]{Esposito2018}.
The upper growth bound occurs only in its logarithmic factor.
The remaining difficulty is $\om_k(h_j)^{N-2}$ in
\eqref{eq:boundary-master-N}: division by $\om_k(h_j)^{N-1}$ as in
\eqref{eq:relative-boundary-N} and Lemma~\ref{lem:annular-growth}
give a uniform oscillation bound before the boundary integrals converge
to $M$. This order determines the harmonic limit without a bounded
pointwise remainder; see Remark~\ref{rem:annular-comparison}.

No third derivative of $F$ is used, and no necessity is claimed for
evenness or uniform convexity. Theorems~\ref{thm:threshold}
and~\ref{thm:annular-pohozaev} do not differentiate $V$: local boundedness
suffices, with their growth assumptions unchanged. The Pohozaev identities
use first derivatives, so $V\in C^1(\RN)$ is assumed throughout.
Remark~\ref{rem:measurable-sector} replaces the open cone by a measurable
angular sector of positive measure.

The paper is organized as follows. We prove Theorem~\ref{thm:threshold}
in Section~\ref{sec:mass} and Theorem~\ref{thm:annular-pohozaev} in
Section~\ref{sec:annular-compactness}. The Pohozaev identities and the
proof of Theorem~\ref{thm:rigidity}, together with its consequences,
are given in Section~\ref{sec:pohozaev}. We conclude in
Section~\ref{sec:applications} with the sharpness examples and the
planar completeness results.

\section{The signed-mass threshold and logarithmic compactness}\label{sec:mass}

We first derive the geometric identities associated with \eqref{ass:F} and then prove an exact logarithmic energy identity for sign-changing right-hand sides.

Since $F$ is continuous and positive on the Euclidean unit sphere, there exist constants $0<c_F\le C_F<\infty$ such that
\[
 c_F|\xi|\le F(\xi)\le C_F|\xi|,
 \quad
 C_F^{-1}|x|\le F^0(x)\le c_F^{-1}|x|.
\]
The dual estimates follow from the definition of $F^0$ and give $B_{c_Fr}\subset W_r\subset B_{C_Fr}$. Constants $c_{F,N},C_{F,N}>0$ depend only on $F,N$ and may change from line to line. We write $A\asymp B$ when $A$ and $B$ are comparable by positive constants.

By \eqref{ass:F}, $a_N=D\Phi_N$ is odd and homogeneous of degree $N-1$.
Euler's identity for $\Phi_N=F^N/N$ gives
\[
 a_N(\xi)\cdot\xi=F(\xi)^N=N\Phi_N(\xi).
\]
Standard consequences of uniform convexity are
\begin{align}
 |a_N(\xi)|&\le C_F|\xi|^{N-1},\label{eq:a-growth}\\
 (a_N(\xi)-a_N(\eta))\cdot(\xi-\eta)
 &\ge c_F(|\xi|+|\eta|)^{N-2}|\xi-\eta|^2
 \ge c_F|\xi-\eta|^N,\label{eq:a-monotonicity}\\
 |a_N(\xi)-a_N(\eta)|
 &\le C_F(|\xi|+|\eta|)^{N-2}|\xi-\eta|.
 \label{eq:a-difference}
\end{align}
The Hessian $D^2\Phi_N(\zeta)$ is comparable to $|\zeta|^{N-2}I$ for $\zeta\ne0$, by homogeneity, norm equivalence, and \eqref{ass:F}. Integration on $\eta+t(\xi-\eta)$, $0<t<1$, gives \eqref{eq:a-monotonicity} and \eqref{eq:a-difference}. The last inequality in \eqref{eq:a-monotonicity} follows from $|\xi|+|\eta|\ge|\xi-\eta|$.  For $N=2$, $a_2$ is globally Lipschitz and strongly monotone. For $N>2$, Section~\ref{sec:annular-compactness} uses \eqref{eq:a-monotonicity}--\eqref{eq:a-difference} without linearization.

\subsection{Wulff geometry and logarithmic compactness}

We first state the local regularity estimate used in the compactness arguments. Its constants depend on the fixed anisotropy, not on a positive lower bound for the gradient.

\begin{lem}\label{lem:local-C1-compactness}
Let $B_{2r}(x_0)\Subset D$ and let $w\in W^{1,N}_{loc}(D)$ be a weak solution of $\diver a_N(\nabla w)=g$ in $D$. If $w$ and $g$ are bounded on $B_{2r}(x_0)$, then $w\in C^{1,\alpha}(B_r(x_0))$ for some $\alpha\in(0,1)$. The local $C^{1,\alpha}$ bound depends only on $F$, $N$, $r$, and the $L^\infty$ norms of $w$ and $g$ on $B_{2r}(x_0)$. In particular, uniform such bounds imply local precompactness in $C^1$.
\end{lem}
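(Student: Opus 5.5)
The plan is to reduce Lemma~\ref{lem:local-C1-compactness} to the classical $C^{1,\alpha}$ theory for degenerate quasilinear equations of $p$-Laplacian type, in the form due to DiBenedetto, Tolksdorf and Lieberman. That theory applies to $\diver A(\nabla w)=g$ with $A\in C^0(\RN)\cap C^1(\RN\setminus\{0\})$ satisfying
\[
 \langle DA(\xi)\eta,\eta\rangle\ge\lambda|\xi|^{p-2}|\eta|^2,\qquad |DA(\xi)|\le\Lambda|\xi|^{p-2}\quad(\xi\ne0),
\]
with $g\in L^\infty$. It gives $w\in C^{1,\alpha}$ on interior subdomains with $\alpha$ and the bound depending only on $N,p,\lambda,\Lambda$, $\|w\|_{L^\infty}$ and $\|g\|_{L^\infty}$. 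So the work consists of verifying these structure conditions for $A=a_N$ with $p=N$ and then tracking constants.

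First, $a_N=D\Phi_N$ is $C^1$ away from $0$ because $F\in C^2(\RN\setminus\{0\})$. It is continuous at $0$ with $a_N(0)=0$ by \eqref{eq:a-growth}. The Jacobian is $Da_N=D^2\Phi_N$, which is homogeneous of degree $N-2$. The paper already notes that $D^2\Phi_N(\zeta)$ is comparable to $|\zeta|^{N-2}I$. Concretely, write $D^2\Phi_N=F^{N-2}D^2(F^2/2)+(N-2)F^{N-2}F_\xi\otimes F_\xi$. Both terms are positive semidefinite, and the first is bounded below by $\lambda_F F^{N-2}|\eta|^2$ by \eqref{ass:F}. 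Norm equivalence and compactness of the unit sphere (where $D^2F$ is continuous) give the upper bound. This yields $\lambda,\Lambda$ depending only on $F,N$. For $N=2$, the same bounds make the operator uniformly elliptic, and the classical theory applies directly.

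Second, I would check the quantitative dependence. By Caccioppoli with the test function $\eta^N(w-k)$, $\|\nabla w\|_{L^N(B_{3r/2})}$ is controlled by $\|w\|_{L^\infty(B_{2r})}$ and $\|g\|_{L^\infty(B_{2r})}$. The gradient sup bound (DiBenedetto/Tolksdorf via Moser iteration on the differentiated equation, justified by regularizing $a_N$ to $a_N^\varepsilon$ with $\varepsilon$-uniform structure constants) then bounds $\|\nabla w\|_{L^\infty(B_{5r/4})}$. Finally, the Hölder gradient estimate on $B_r$ gives the $C^{1,\alpha}$ bound with the stated dependence, after rescaling to the unit ball.

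Third, precompactness follows from Arzelà--Ascoli. A uniform $C^{1,\alpha}(B_r(x_0))$ bound makes $\{w,\nabla w\}$ equicontinuous and bounded, so families are precompact in $C^1(\overline{B_{r'}(x_0)})$ for $r'<r$. A covering/diagonal argument gives local $C^1$ precompactness.

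The main obstacle is the loss of uniform ellipticity at $\nabla w=0$ for $N>2$. It also has to be certified that no lower gradient bound and no third derivatives of $F$ enter the constants. I would handle this through the approximation $a_N^\varepsilon(\xi)=D\big(\tfrac1N(F(\xi)^2+\varepsilon)^{N/2}\big)$, smoothed, whose structure constants are uniform in $\varepsilon$. Solutions $w_\varepsilon$ with the same boundary data satisfy uniform estimates and converge to $w$ by monotonicity \eqref{eq:a-monotonicity} and uniqueness of the Dirichlet problem. This is a standard citation rather than new work.
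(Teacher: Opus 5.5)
Your proposal is correct. For $N>2$ it is the paper's argument. You establish $c|\xi|^{N-2}|\zeta|^2\le\langle Da_N(\xi)\zeta,\zeta\rangle$ and $|Da_N(\xi)|\le C|\xi|^{N-2}$ for $\xi\ne0$, and then invoke DiBenedetto--Tolksdorf. Your splitting $D^2\Phi_N=F^{N-2}D^2(F^2/2)+(N-2)F^{N-2}F_\xi\otimes F_\xi$ is a clean way to get these bounds.

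The routes differ at $N=2$. There $a_2=D(F^2/2)$ is Lipschitz, but unless $F^2$ is a quadratic form it is not differentiable at $\xi=0$: $Da_2$ is $0$-homogeneous, so it extends continuously to the origin only if it is constant. Your remark that ``the classical theory applies directly'' therefore holds only in two senses. It holds for theorems of Tolksdorf/Lieberman type, which only require $A\in C^1(\RN\setminus\{0\})$ and allow $p=2$, $\kappa=0$. It also holds after the mollification in your regularization; note that $(F^2+\varepsilon)^{N/2}$ by itself changes nothing when $N=2$, so the smoothing is what does the work there.

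The paper avoids that citation and argues by hand in the planar case:
\begin{enumerate}
\item Difference quotients give $w\in W^{2,2}_{loc}$, using only $g\in L^2_{loc}$.
\item Each $\pa_i w$ then solves $\diver(B(x)\nabla\pa_i w)=\diver(g e_i)$. Here $B=Da_2(\nabla w)$ off the critical set and any fixed elliptic matrix on it, which is harmless because $D^2w=0$ a.e.\ there.
\item De Giorgi's estimate with bounded divergence-form data gives H\"older continuity of $\nabla w$.
\end{enumerate}

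Your approach is uniform in $N$ but rests entirely on the degenerate-regularity citation in the planar case. The paper's planar argument is elementary and needs only linear measurable-coefficient theory. It also shows explicitly why the non-differentiability of $a_2$ at the origin is irrelevant.
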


\begin{proof}
Condition~\eqref{ass:F}, homogeneity, and compactness of the unit sphere give, for $\xi\ne0$,
\[
 c|\xi|^{N-2}|\zeta|^2
 \le \langle Da_N(\xi)\zeta,\zeta\rangle
 \le C|\xi|^{N-2}|\zeta|^2,
 \quad |Da_N(\xi)|\le C|\xi|^{N-2}.
\]
For $N>2$, $Da_N$ extends continuously by zero at the origin. The standard scalar $N$-growth regularity estimates therefore apply; see \cite{DiBenedetto1983,Tolksdorf1984}.

For $N=2$, the map $a_2$ is globally Lipschitz and strongly monotone, although it need not be differentiable at zero. Difference quotients in the weak equation give $w\in W^{2,2}_{loc}$. Only $g\in L^2_{loc}(D)$ is used: discrete integration by parts places the difference quotient on the test function, and ellipticity estimates the resulting term. The Sobolev chain rule then yields
\[
 \diver(B(x)\nabla\pa _iw)=\pa _i g
       =\diver(g e_i),
\]
where $e_i$ is the $i$th coordinate vector and $B(x)=Da_2(\nabla w(x))$ when $\nabla w(x)\ne0$. On the zero-gradient set choose any fixed matrix with the same ellipticity bounds. This choice does not change the identity, since $D^2w=0$ almost everywhere on that set. The resulting $B$ is measurable, bounded, and uniformly elliptic. The inhomogeneous De Giorgi estimates, with the bounded vector field $g e_i$ on the right, give local H\"older bounds for $\pa _iw$; see \cite[Chapter~8]{GilbargTrudinger2001}. Caccioppoli's inequality controls the local $L^2$ norm of the gradient in terms of the stated bounds. The uniform $C^{1,\alpha}$ estimate follows also in dimension two; local $C^1$ precompactness follows from Arzel\`a--Ascoli.
\end{proof}

For $x\ne0$, strict convexity gives a unique maximizer $\xi_x$ of
$x\cdot\xi$ on $\{F=1\}$, and differentiation of the support function
gives $\xi_x=\nabla F^0(x)$. The multiplier in
$x=\lambda F_\xi(\xi_x)$ equals $F^0(x)$ by Euler's identity.
Thus
\be \label{eq:polar-identities}
 F(\nabla F^0)=1,\quad
 F_\xi(\nabla F^0)=\frac{x}{F^0},\quad
 x\cdot\nabla F^0=F^0,\quad
 a_N(\nabla F^0)=\frac{x}{F^0}.
\ee
The regularity of $F^0$ also follows from \eqref{ass:F}. Indeed, the Legendre map $D(F^2/2)$ has positive definite derivative away from zero, and its inverse is $D((F^0)^2/2)$. The inverse function theorem gives $F^0\in C^2(\RN\setminus\{0\})$. Euler's identity implies $\nabla F^0\ne0$ there, so every Wulff sphere has a $C^2$ boundary. No smoothness of either norm at zero is used.

Geometrically, $F$ and $F^0$ measure covectors and tangent vectors, respectively;
the sets $W_r$ are the balls of the corresponding flat normed geometry.
For a conformal change $F^0\mapsto e^uF^0$, the dual norm is $e^{-u}F$ and
the volume density, normalized by Lebesgue measure for the undeformed norm,
is $e^{Nu}\,\d x$. Hence, for $\varphi\in C_c^1(\RN)$,
\[
 \int_{\RN}(e^{-u}F(\nabla\varphi))^N e^{Nu}\,\d x
 =\int_{\RN}F(\nabla\varphi)^N\,\d x.
\]
The $N$-energy is therefore unchanged by the conformal rescaling.
Euclidean inversion, however, need not preserve a general anisotropic operator.

\begin{lem}
For every nonnegative measurable $g$,
\be \label{eq:polar-integration}
 \int_{\RN}g(F^0(x))\, \d x
 =N\kap _N\int_0^\infty g(r)r^{N-1}\, \d r.
\ee
Moreover,
\be \label{eq:minkowski}
 \int_{\pa  \W_R}x\cdot\nu\, \d\mathcal H^{N-1}
 =N\kap _NR^N,
\ee
and $\ln  F^0$ is $\QN$-harmonic on $\RN\setminus\{0\}$, with
\be \label{eq:log-boundary}
 \int_{\pa  \W_R}a_N(\nabla\ln  F^0)\cdot\nu\,\d\mathcal H^{N-1}
 =N\kap _N.
\ee
\end{lem}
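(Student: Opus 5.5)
The lemma has three parts, which I would prove in the following order.

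\emph{Polar integration.} I would start with the layer-cake formula for the homogeneous function $F^0$. For $g=\mathbf 1_{[0,r)}$ one has $\int g(F^0)\,\d x=|W_r|=\kap_N r^N$, because $W_r=rW_1$ and Lebesgue measure scales by $r^N$. This equals $N\kap_N\int_0^r s^{N-1}\,\d s$. Both sides of \eqref{eq:polar-integration} define measures on $[0,\infty)$ through $g$: the left side is the pushforward of Lebesgue measure under $F^0$, the right side is $N\kap_N r^{N-1}\,\d r$. Since they agree on intervals $[0,r)$, they agree on Borel sets, and by monotone convergence on all nonnegative measurable $g$.

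\emph{Minkowski identity.} For \eqref{eq:minkowski} I would apply the divergence theorem to the field $x$ on $W_R$. This is legitimate because $\pa W_R$ is a $C^2$ hypersurface (shown just above via $\nabla F^0\ne0$) and $W_R$ is a bounded convex domain. Since $\diver x=N$, the flux equals $N|W_R|=N\kap_N R^N$.

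\emph{Harmonicity of $\ln F^0$ and its flux.} On $\RN\setminus\{0\}$ the function $\ln F^0$ is $C^2$, with $\nabla\ln F^0=\nabla F^0/F^0$. Homogeneity of degree $N-1$ of $a_N$ and the identity $a_N(\nabla F^0)=x/F^0$ from \eqref{eq:polar-identities} give
\[
 a_N(\nabla\ln F^0)=(F^0)^{-(N-1)}a_N(\nabla F^0)=\frac{x}{(F^0)^N}.
\]
This is $C^1$ away from zero. Its divergence is
\[
 N(F^0)^{-N}-N(F^0)^{-N-1}\,x\cdot\nabla F^0=0,
\]
using $x\cdot\nabla F^0=F^0$. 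So $\ln F^0$ is a classical, hence weak, $Q_N$-harmonic function on $\RN\setminus\{0\}$.

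For \eqref{eq:log-boundary}, note that $F^0=R$ on $\pa W_R$. The flux therefore equals $R^{-N}\int_{\pa W_R}x\cdot\nu\,\d\mathcal H^{N-1}$, which is $N\kap_N$ by \eqref{eq:minkowski}.

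No step is a real obstacle. The only point needing care is justifying the divergence theorem on $\pa W_R$, and that regularity was already established before the lemma.
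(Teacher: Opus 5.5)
Your proposal is correct and follows essentially the same proof as the paper: the distribution-function (pushforward) argument from $|W_r|=\kappa_N r^N$, the divergence theorem for the field $x$ on $W_R$, and the computation $a_N(\nabla\ln F^0)=x/(F^0)^N$ with vanishing divergence, followed by $F^0=R$ on $\partial W_R$ and \eqref{eq:minkowski}. Your added details, namely uniqueness of the measure from agreement on $[0,r)$ and the passage from classical to weak $Q_N$-harmonicity, only make explicit what the paper leaves implicit.
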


\begin{proof}
Since $|\W_r|=\kap _Nr^N$, integration with respect to the distribution function of $F^0$ gives \eqref{eq:polar-integration}. The divergence theorem applied to $x$ gives \eqref{eq:minkowski}. By \eqref{eq:polar-identities} and homogeneity,
\[
 a_N(\nabla\ln  F^0)=\frac{x}{(F^0)^N},\quad
 \diver\Big(\frac{x}{(F^0)^N}\Big)
 =\frac{N}{(F^0)^N}-\frac{N\,x\cdot\nabla F^0}{(F^0)^{N+1}}=0
 \quad (x\ne0).
\]
On $\pa \W_R$, integration and \eqref{eq:minkowski} yield
$\int_{\pa \W_R}a_N(\nabla\ln  F^0)\cdot\nu\,\d\mathcal H^{N-1}
=R^{-N}N\kap _NR^N=N\kap _N$.
\end{proof}

On $\pa W_R$, \eqref{eq:polar-identities} gives $x\cdot\nu=R F(\nu)$. Thus \eqref{eq:minkowski} identifies the anisotropic perimeter:
\[
 P_F(W_R):=\int_{\pa W_R}F(\nu)\,\d\mathcal H^{N-1}
 =N\kap _N R^{N-1}.
\]
In particular, the constant $N\kap _N$ in \eqref{eq:log-boundary} is the anisotropic perimeter of $W_1$.

The same logarithmic profile also minimizes the $N$-energy in a Wulff annulus. Here $E\Subset\Om $ means that the closure of $E$ is compactly contained in $\Om $. For a bounded Lipschitz domain $\Om $ with $\overline{\W_r}\subset\Om $, define
\[
 \capFN(\W_r,\Om )
 :=\inf\Big\{\int_\Om  F(\nabla\varphi)^N\,\d x:
 \varphi\in W^{1,N}(\Om ),\,  \varphi=0\text{ a.e. on }\W_r,
 \varphi-1\in W^{1,N}_0(\Om )
\Big\}.
\]
The last condition specifies the outer trace. This gives the exact capacity formula and its monotonicity with respect to bounded Lipschitz outer domains. The capacity comparison below uses only $C^2$ Wulff annuli.
\begin{lem}
For $0<r<R$,
\be \label{eq:wulff-capacity}
 \capFN(\W_r,\W_R)
 =\frac{N\kap _N}{[\ln (R/r)]^{N-1}}.
\ee
If $\W_r\Subset\Om _1\subset\Om _2$ and both outer domains are bounded and Lipschitz, then
\be \label{eq:capacity-monotonicity}
 \capFN(\W_r,\Om _1)\ge\capFN(\W_r,\Om _2).
\ee
\end{lem}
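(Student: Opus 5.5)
To prove \eqref{eq:wulff-capacity} I would compare against the logarithmic profile
\[
 \varphi_*(x)=\frac{\ln(F^0(x)/r)}{\ln(R/r)}
\]
on $\W_R\setminus\overline{\W_r}$, extended by $0$ on $\W_r$. It is admissible: it is Lipschitz, vanishes on $\W_r$, and equals $1$ on $\pa\W_R$, so $\varphi_*-1\in W^{1,N}_0(\W_R)$. By \eqref{eq:polar-identities} we have $F(\nabla\ln F^0)=1/F^0$. Then \eqref{eq:polar-integration} with $g(s)=s^{-N}\mathbf 1_{(r,R)}(s)$ gives
\[
 \int F(\nabla\varphi_*)^N\,\d x=\frac{N\kap_N\ln(R/r)}{[\ln(R/r)]^N},
\]
which is the upper bound.

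For the lower bound I would use convexity of $\Phi_N$ and the $\QN$-harmonicity of $\ln F^0$. Take any admissible $\varphi$; by density I may assume $\varphi$ is smooth up to the boundary on the annulus $A=\W_R\setminus\overline{\W_r}$, with $\varphi=0$ on $\pa\W_r$ and $\varphi=1$ on $\pa\W_R$. The energy on $\W_r$ is zero, so only $A$ matters. Convexity gives $\Phi_N(\nabla\varphi)\ge\Phi_N(\nabla\varphi_*)+a_N(\nabla\varphi_*)\cdot\nabla(\varphi-\varphi_*)$. Since $a_N(\nabla\varphi_*)=[\ln(R/r)]^{1-N}x/(F^0)^N$ is divergence free in $A$, the divergence theorem on the $C^2$ annulus reduces the cross term to boundary integrals of $(\varphi-\varphi_*)\,a_N(\nabla\varphi_*)\cdot\nu$. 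These vanish, because $\varphi-\varphi_*=0$ on both $\pa\W_r$ and $\pa\W_R$. Hence $\int_A F(\nabla\varphi)^N\ge\int_A F(\nabla\varphi_*)^N$.

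The only delicate point is justifying the boundary traces for general $W^{1,N}$ competitors. On the inner boundary, the condition $\varphi=0$ a.e. on $\W_r$ forces the trace from the annulus side to vanish. On the outer boundary, $\varphi-1\in W^{1,N}_0$ does the same. Alternatively, I would approximate $\varphi-\varphi_*$ in $W^{1,N}_0(A)$ by $C_c^\infty(A)$ functions, for which the cross term vanishes by the weak harmonicity of $\varphi_*$; that weak harmonicity holds since $\varphi_*$ is $C^2$ in $A$. This is the step I expect to need the most care, and the approximation route avoids trace theory entirely.

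For \eqref{eq:capacity-monotonicity}: given $\varphi$ admissible for $\Om_1$, extend it by $1$ on $\Om_2\setminus\Om_1$. Because $\varphi-1\in W^{1,N}_0(\Om_1)$, the function $\varphi-1$ extended by zero lies in $W^{1,N}_0(\Om_2)$. The extension still vanishes on $\W_r$, and its gradient vanishes outside $\Om_1$, so it has the same energy. Taking the infimum gives the inequality.
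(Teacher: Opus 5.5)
Your proposal is correct and follows the paper's proof. The logarithmic profile $\ln(F^0/r)/\ln(R/r)$ gives the upper bound by polar integration, convexity of $\Phi_N$ together with the $Q_N$-harmonicity of that profile gives the matching lower bound, and extension by one gives the monotonicity; your approximation of $\varphi-\varphi_*$ in $W^{1,N}_0(\W_R\setminus\overline{\W_r})$ by $C_c^\infty$ functions justifies why the cross term vanishes, a step the paper asserts without comment.
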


\begin{proof}
In $\W_R\setminus\overline{\W_r}$ set
\[
 \varphi_0(x)=\frac{\ln (F^0(x)/r)}{\ln (R/r)}.
\]
The function $\varphi_0$ is $Q_N$-harmonic and has the prescribed traces. For any admissible $\varphi$, convexity gives
\begin{align*}
\int_{\W_R\setminus\overline{\W_r}}\Phi_N(\nabla\varphi)\,\d x
 &\ge\int_{\W_R\setminus\overline{\W_r}}\Phi_N(\nabla\varphi_0)\,\d x
 +\int_{\W_R\setminus\overline{\W_r}} a_N(\nabla\varphi_0)\cdot\nabla(\varphi-\varphi_0)\,\d x\\
 &=\int_{\W_R\setminus\overline{\W_r}}\Phi_N(\nabla\varphi_0)\,\d x.
\end{align*}
Thus \eqref{eq:polar-integration} and $F(\nabla F^0)=1$ give
\[
 \capFN(\W_r,\W_R)
 =\frac{N\kap _N}{[\ln (R/r)]^N}\int_r^R\frac{\d s}{s}
 =\frac{N\kap _N}{[\ln (R/r)]^{N-1}}.
\]
Extension by one from $\Om _1$ to $\Om _2$ preserves admissibility and energy, proving \eqref{eq:capacity-monotonicity}.
\end{proof}

The coarea formula gives small boundary integrals for integrable functions.

\begin{lem}\label{lem:good-radii}
If $g\ge0$ belongs to $L^1(\{F^0>1\})$, there exist $R_j\to\infty$ such that
\[
 R_j\int_{\pa  \W_{R_j}}g\,\d\mathcal H^{N-1}\to0\quad\text{ as }\, j\to\infty.
\]
\end{lem}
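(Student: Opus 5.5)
The plan is to combine the coarea formula with the polar integration formula \eqref{eq:polar-integration}, after which the statement reduces to an elementary fact about integrable functions on the half-line. Since $F^0\in C^2(\RN\setminus\{0\})$ with $\nabla F^0\ne0$, and $F^0$ is Lipschitz, the coarea formula applies to $F^0$:
\[
 \int_{\{F^0>1\}}g\,|\nabla F^0|\,\d x=\int_1^\infty\Big(\int_{\pa\W_r}g\,\d\mathcal H^{N-1}\Big)\d r .
\]
We have used that the level sets $\{F^0=r\}$ coincide with $\pa\W_r$.

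First we bound $|\nabla F^0|$. By \eqref{eq:polar-identities}, $F(\nabla F^0)=1$, so the norm equivalence gives $|\nabla F^0|\le c_F^{-1}$ everywhere off the origin. Setting
\[
 h(r):=\int_{\pa\W_r}g\,\d\mathcal H^{N-1},
\]
we obtain $\int_1^\infty h(r)\,\d r\le c_F^{-1}\|g\|_{L^1(\{F^0>1\})}<\infty$. Here $h$ is measurable, with values in $[0,\infty]$ and finite for a.e.\ $r$. If one prefers to avoid the $|\nabla F^0|$ weight, one can instead use $|\nabla F^0|\ge C_F^{-1}$ together with the coarea formula for $g/|\nabla F^0|$. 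Either direction suffices, because only an upper bound on $\int h$ is needed.

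Next comes the one-dimensional step: if $h\ge0$ and $h\in L^1(1,\infty)$, then $\liminf_{r\to\infty} r\,h(r)=0$. Suppose instead that $r h(r)\ge\varepsilon>0$ for a.e.\ $r\ge r_0$. Then $\int_{r_0}^\infty h\,\d r\ge\varepsilon\int_{r_0}^\infty \d r/r=\infty$, which is a contradiction. Hence for every $k$ the set $\{r\ge k:\ r h(r)<1/k\}$ has positive measure, and we may choose $R_k$ in it. By construction $R_k\ge k\to\infty$ and $R_k h(R_k)<1/k\to0$, which gives the claim with $R_j=R_k$.

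The only point needing care is the legitimacy of the coarea formula for merely measurable $g\ge0$, together with the identification of the level sets with the $C^2$ hypersurfaces $\pa\W_r$. Both are standard, since $F^0$ is Lipschitz and $C^2$ away from zero with nonvanishing gradient. The remaining steps are elementary.
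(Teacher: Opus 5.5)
Your argument is correct and is essentially the paper's proof: coarea plus the upper bound $|\nabla F^0|\le c_F^{-1}$ reduces everything to an integrable $h$ on $(1,\infty)$. The paper then picks one radius in each dyadic interval $[2^j,2^{j+1}]$ by averaging, which gives $R_jh(R_j)\le C\int_{\{2^j<F^0<2^{j+1}\}}g\,\d x$, whereas you argue by contradiction with $\int\d r/r=\infty$.

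One small slip is in your parenthetical aside. The route through the coarea formula for $g/|\nabla F^0|$ also needs the upper bound $|\nabla F^0|\le c_F^{-1}$, to get $g\le c_F^{-1}g/|\nabla F^0|$, not the lower bound $|\nabla F^0|\ge C_F^{-1}$. So ``either direction suffices'' is inaccurate, though your main argument does not depend on it.
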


\begin{proof}
Coarea, the uniform bound for $|\nabla F^0|$, and averaging over $[2^j,2^{j+1}]$ give a radius $R_j$ in that interval such that
\[
 R_j\int_{\pa \W_{R_j}}g\,\d\mathcal H^{N-1}
 \le 2C_F\int_{\{2^j<F^0<2^{j+1}\}}g\,\d x\to0.
\]
The last limit follows from the integrability of $g$.
\end{proof}

Set $f:=Ve^{Nu}$ and
\[
 b_u=\sup_{\RN}u+1,\quad v=b_u-u\ge1.
\]
The evenness $F(-\xi)=F(\xi)$ in \eqref{ass:F} gives
$\Phi_N(-\xi)=\Phi_N(\xi)$ and hence $a_N(-\xi)=-a_N(\xi)$.
As $\nabla v=-\nabla u$, equation~\eqref{maineq} becomes
\be \label{eq:v-equation}
 \diver a_N(\nabla v)=f\quad\text{ in }\, \RN.
\ee
Testing \eqref{eq:v-equation} with a negative power of $v$ gives an exact logarithmic energy identity.

\begin{prop}
Let $v\in C^1_{loc}(\RN)\cap W^{1,N}_{loc}(\RN)$ be a weak solution of~\eqref{eq:v-equation}, with $v\ge1$ and $f\in L^1(\RN)$. Then
\be \label{eq:automatic-log-energy}
 (N-1)\int_{\RN}F(\nabla\ln  v)^N\, \d x
 =\int_{\RN}\frac{f}{v^{N-1}}\, \d x
 \le\int_{\RN}f_+\, \d x<\infty.
\ee
\end{prop}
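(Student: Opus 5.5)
We test \eqref{eq:v-equation} with the function $\phi=\eta\,v^{1-N}$, where $\eta\in C_c^\infty(\RN)$ is a cutoff with $0\le\eta\le1$. Then we let the cutoff expand. Since $v\in C^1_{loc}\cap W^{1,N}_{loc}$ and $v\ge1$, the function $\phi$ is a compactly supported $W^{1,N}$ function. By density it is an admissible test function. Its gradient is
\[
 \nabla\phi=v^{1-N}\nabla\eta-(N-1)\eta\,v^{-N}\nabla v .
\]
The weak formulation has the sign $\int a_N(\nabla v)\cdot\nabla\phi=-\int f\phi$, because the equation is $\diver a_N(\nabla v)=f$. Using $a_N(\xi)\cdot\xi=F(\xi)^N$ and homogeneity, we have $v^{-N}F(\nabla v)^N=F(\nabla\ln v)^N$. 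This gives
\[
 (N-1)\int_{\RN}\eta\,F(\nabla\ln v)^N\,\d x
 =\int_{\RN}\eta\,\frac{f}{v^{N-1}}\,\d x
 +\int_{\RN}v^{1-N}a_N(\nabla v)\cdot\nabla\eta\,\d x .
\]
We should double-check the sign convention against \eqref{maineq}. There $-Q_Nu=f$ and $\nabla v=-\nabla u$, so $\diver a_N(\nabla v)=f$, and testing gives $-\int a_N(\nabla v)\cdot\nabla\phi=\int f\phi$. This is the sign that yields the identity with $+f$ on the right.

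Next we choose $\eta=\eta_R$ adapted to Wulff geometry. We take the logarithmic cutoff $\eta_R=1$ on $W_R$, $\eta_R=0$ outside $W_{R^2}$, and $\eta_R=2-\ln F^0/\ln R$ in between. The error term is bounded using \eqref{eq:a-growth}:
\[
 |v^{1-N}a_N(\nabla v)\cdot\nabla\eta_R|\le C\,F(\nabla\ln v)^{N-1}|\nabla\eta_R| .
\]
By Hölder's inequality, the error is at most
\[
 C\Big(\int_{W_{R^2}\setminus W_R}F(\nabla\ln v)^N\Big)^{(N-1)/N}\|\nabla\eta_R\|_{L^N}.
\]
By \eqref{eq:wulff-capacity}, or directly by \eqref{eq:polar-integration}, we get $\|\nabla\eta_R\|_{L^N}^N\le C(\ln R)^{1-N}\to0$.

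The main obstacle is that we do not yet know $\int F(\nabla\ln v)^N<\infty$, so the annular energy factor might not be small. We handle this by absorption. Set $E(R)=\int\eta_R F(\nabla\ln v)^N$, and use $|f|v^{1-N}\le|f|\in L^1$. By Young's inequality the error term is at most
\[
 \tfrac{N-1}{2}E(R)+C\,\|\nabla\eta_R\|_{L^N}^N ,
\]
where we use that $F(\nabla\ln v)^N\,|\nabla\eta_R|^{0}$ on the support is controlled by the $\eta_R$-weighted energy. For that control, a cleaner route is to replace $\eta_R$ by $\eta_R^N$. The error then involves $\eta_R^{N-1}|\nabla\eta_R|$, and Young's inequality yields $\tfrac{N-1}{2}\int\eta_R^N F(\nabla\ln v)^N+C\int|\nabla\eta_R|^N$. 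Consequently
\[
 \int\eta_R^NF(\nabla\ln v)^N\le C\|f\|_{L^1}+C(\ln R)^{1-N},
\]
and monotone convergence gives $\int_{\RN}F(\nabla\ln v)^N<\infty$.

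With finiteness established, we return to the Hölder bound. The annular energy $\int_{W_{R^2}\setminus W_R}F(\nabla\ln v)^N$ is now a tail of a convergent integral, so it tends to zero, and $\|\nabla\eta_R\|_{L^N}\to0$ as well. Hence the error term vanishes as $R\to\infty$. On the right-hand side, $\eta_R^N f v^{1-N}\to fv^{1-N}$ by dominated convergence, with dominating function $|f|$. This proves the identity in \eqref{eq:automatic-log-energy}. The final inequality follows from $v\ge1$: we have $fv^{1-N}\le f_+v^{1-N}\le f_+$, and $\int f_+<\infty$ since $f\in L^1$.
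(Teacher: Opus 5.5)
Your argument is correct and is essentially the paper's proof. You test with $\eta_R^N v^{1-N}$, absorb the cutoff term by Young's inequality to get finiteness of the logarithmic energy, and then pass to the limit using H\"older's inequality with a logarithmic cutoff of vanishing critical energy and dominated convergence. Your first attempt with $\eta$ in place of $\eta^N$ does not allow absorption, as you noticed, so it should simply be deleted; using the Wulff cutoff instead of the paper's smoothed Euclidean one makes no difference.
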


\begin{proof}
Let $0\le\eta\le1$ be compactly supported and use
\[
 \phi=\frac{\eta^N}{v^{N-1}}
\]
as a test function.  Since
\[
 \nabla\phi
 =N\eta^{N-1}v^{1-N}\nabla\eta
 -(N-1)\eta^Nv^{-N}\nabla v
\]
and $a_N(\nabla v)\cdot\nabla v=F(\nabla v)^N$, the weak identity becomes
\be\label{eq:log-test-identity}
 (N-1)\int_{\RN}\eta^N F(\nabla\ln v)^N\,\d x
 =\int_{\RN}f\frac{\eta^N}{v^{N-1}}\,\d x
 +N\int_{\RN}\eta^{N-1}v^{1-N}a_N(\nabla v)\cdot\nabla\eta\,\d x.
\ee
As $0\le\eta\le1$ and $v\ge1$, the first term on the right of \eqref{eq:log-test-identity} is at most $\int_{\RN}f_+\,\d x$.  Furthermore, by \eqref{eq:a-growth} and norm equivalence,
\[
 v^{1-N}|a_N(\nabla v)|
 \le C_F F(\nabla\ln  v)^{N-1}.
\]
Young's inequality in the form $AB\le \var  A^{N/(N-1)}+C_\var  B^N$ therefore gives
\[
 \int_{\RN}\eta^N F(\nabla\ln v)^N\,\d x
 \le C_{F,N}\int_{\RN}f_+\,\d x
 +C_{F,N}\int_{\RN}F(\nabla\eta)^N\,\d x.
\]

It remains to let the support of $\eta$ tend to infinity with critical energy tending to zero.  For $R>e$, start from the radial cutoff
\[
 \eta_R(x)=
\left\{\begin{aligned}
    &1,&&|x|\le R,\\
 &2-\dfrac{\ln |x|}{\ln  R},&&R<|x|<R^2,\\
& 0,&&|x|\ge R^2,
\end{aligned}\right.
\]
and smooth it near the interfaces.  On the transition annulus, $|\nabla\eta_R|\le C/(|x|\ln  R)$.  Hence
\be \label{eq:log-cutoff}
 \int_{\RN}F(\nabla\eta_R)^N\, \d x
 \le \frac{C_{F,N}}{(\ln  R)^N}
 \int_R^{R^2}\frac{\d r}{r}
 \le\frac{C_{F,N}}{(\ln R)^{N-1}}\to0\quad\text{ as }\, R\to\infty.
\ee
Fatou's lemma gives $F(\nabla\ln v)^N\in L^1(\RN)$. The last term in \eqref{eq:log-test-identity}, with $\eta=\eta_R$, is bounded in absolute value by
\[
 C\Big(\int_{\operatorname{supp}\nabla\eta_R}
 F(\nabla\ln  v)^N\, \d x\Big)^{(N-1)/N}
 \Big(\int_{\RN} F(\nabla\eta_R)^N\, \d x\Big)^{1/N},
\]
which tends to zero as $R\to\infty$. The estimates $|f|\eta_R^Nv^{1-N}\le|f|$ and $\eta_R^NF(\nabla\ln v)^N\le F(\nabla\ln v)^N$ justify dominated convergence in \eqref{eq:log-test-identity}, proving \eqref{eq:automatic-log-energy}.
\end{proof}

Theorem~\ref{thm:threshold} can fail when $\sup_{\RN}u=\infty$, even if the remaining assumptions hold, as the following example shows.

\begin{exa}
In dimension two with $F(\xi)=|\xi|$, set $r=|x|$ and
\[
 u(x)=x_1+\ln (1+r^2)+2e^{-r^2},\quad
 f(x)=-\frac4{(1+r^2)^2}+8(1-r^2)e^{-r^2},\quad
 V(x)=f(x)e^{-2u(x)}.
\]
Then $u,V$ are smooth, $-\Delta u=Ve^{2u}=f$, and $f\in L^1(\R^2)$. $V$ changes sign: $f(0)=4$, whereas $f<0$ for $r\ge1$. Integration gives
\[
 \int_{\R^2}Ve^{2u}\,\d x=-4\pi.
\]
On the cone $x_1<-|x|/2$, one has $|V(x)|\ge c e^{|x|}(1+|x|^2)^{-4}$ for large $|x|$, so the lower cone bound holds for every finite exponent $\gamma$. However, $\sup_{\R^2}u=\infty$. This example does not satisfy \eqref{eq:basic-class}; it shows why that upper bound cannot be omitted.
\end{exa}

\begin{prop}\label{prop:multiplicative-convergence}
Let $A\Subset\RN\setminus\{0\}$ be bounded, connected, and Lipschitz. Set
\[
 m_A(R):=\dashint_A \ln  v(Ry)\, \d y,
 \quad
 c_A(R):=e^{m_A(R)}.
\]
Then, as $R\to\infty$,
\be \label{eq:log-convergence}
 \ln \frac{v(R\,\cdot)}{c_A(R)}\to 0
 \quad\text{ in }\, W^{1,N}(A),
\ee
and, for every $1\le p<\infty$,
\be \label{eq:multiplicative-convergence}
 \frac{v(R\,\cdot)}{c_A(R)}\to 1
 \quad\text{ in }\, L^p(A).
\ee
\end{prop}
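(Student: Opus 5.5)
The plan is to combine the logarithmic energy identity \eqref{eq:automatic-log-energy} with the scale invariance of the critical $N$-energy and the Poincar\'e inequality on $A$. Set $w_R(y):=\ln v(Ry)-m_A(R)$ for $y\in A$. Then $\nabla w_R(y)=R\,(\nabla\ln v)(Ry)$. The change of variables $x=Ry$ gives
\[
 \int_A F(\nabla w_R)^N\,\d y=\int_{RA}F(\nabla\ln v)^N\,\d x .
\]
Since $A\Subset\RN\setminus\{0\}$ is bounded, there are $0<a<b$ with $A\subset\{a<|y|<b\}$. Hence $RA\subset\{|x|>aR\}$, and the right-hand side is at most the tail $\int_{\{|x|>aR\}}F(\nabla\ln v)^N\,\d x$. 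By \eqref{eq:automatic-log-energy} and $f\in L^1$ (which holds because $|f|=|V|e^{Nu}$ is integrable in \eqref{eq:basic-class}), $F(\nabla\ln v)^N\in L^1(\RN)$. The tail therefore tends to zero as $R\to\infty$. Norm equivalence $c_F|\xi|\le F(\xi)$ then gives $\|\nabla w_R\|_{L^N(A)}\to0$.

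Next, because $A$ is bounded, connected and Lipschitz, the Poincar\'e--Wirtinger inequality holds on $A$. Since $w_R$ has zero mean on $A$, it gives $\|w_R\|_{L^N(A)}\le C_A\|\nabla w_R\|_{L^N(A)}\to0$. This proves \eqref{eq:log-convergence}, because $w_R=\ln(v(R\cdot)/c_A(R))$.

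For \eqref{eq:multiplicative-convergence} we must pass from $w_R\to0$ to $e^{w_R}\to1$ in every $L^p(A)$. Convergence in $L^N$ alone does not control exponentials, so the main obstacle is to obtain uniform exponential integrability of $w_R$. I would use the Moser--Trudinger inequality on the Lipschitz domain $A$ for mean-zero functions, in the Euclidean version; no sharp constant is needed. It gives
\[
 \int_A \exp\Big(\beta\,\frac{|w|^{N/(N-1)}}{\|\nabla w\|_{L^N(A)}^{N/(N-1)}}\Big)\d y\le C_A
\]
for some $\beta=\beta(N,A)>0$. Since $\|\nabla w_R\|_{L^N(A)}=:\delta_R\to0$, Young's inequality gives $p|w_R|\le \beta|w_R|^{N/(N-1)}\delta_R^{-N/(N-1)}+C(p,N,\beta)\,\delta_R^{N}p^{N}$. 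Consequently $\int_A e^{p|w_R|}\,\d y$ is bounded uniformly for large $R$, for every $p<\infty$. An alternative to Moser--Trudinger is to bound the moments $\|w_R\|_{L^q}\le C q^{(N-1)/N}\delta_R$ via Sobolev embedding and sum the exponential series.

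Finally, we have the pointwise bound $|e^{w_R}-1|^p\le |w_R|^p e^{p|w_R|}$. H\"older's inequality then gives
\[
 \|e^{w_R}-1\|_{L^p(A)}^p\le\|w_R\|_{L^{2p}(A)}^p\Big(\int_A e^{2p|w_R|}\Big)^{1/2}.
\]
The first factor tends to zero: apply the previous bound with $2p$ in place of $p$ to see that $w_R$ is bounded in every $L^q$, and combine this with $w_R\to0$ in $L^N$ by interpolation, or directly via the moment bound. This yields \eqref{eq:multiplicative-convergence}. Continuity of $v$ (from $C^1_{loc}$) ensures that all quantities are well defined and that $m_A(R)$ is finite.
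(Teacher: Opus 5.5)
Your proof is correct and follows the paper's route. You use scale invariance of the critical energy together with \eqref{eq:automatic-log-energy} for the gradient, then Poincar\'e--Wirtinger for the mean-zero function, then uniform Moser--Trudinger exponential integrability and H\"older with $|e^s-1|\le|s|e^{|s|}$. The only cosmetic difference is that you invoke the mean-zero Moser--Trudinger inequality on the Lipschitz domain directly, whereas the paper gets there through a bounded Sobolev extension and a cutoff to zero-trace functions.
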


\begin{proof}
Set
\[
 h_R(y)=\ln  v(Ry)-m_A(R).
\]
Since $\nabla_y\ln  v(Ry)=R\nabla_x\ln  v(Ry)$ and the exponent equals the dimension,
\[
 \int_A F(\nabla h_R)^N\, \d y
 =\int_{RA}F(\nabla\ln  v)^N\, \d x.
\]
Since $\inf_{y\in A}|Ry|\to\infty$ as $R\to\infty$, the right-hand side tends to zero by \eqref{eq:automatic-log-energy}.  The normalization $\dashint_A h_R\,\d y=0$ and Poincar\'e's inequality give $h_R\to0$ in $W^{1,N}(A)$, proving \eqref{eq:log-convergence}.

The passage from $h_R$ to $e^{h_R}$ is not automatic from Sobolev convergence at the critical exponent.  Use a bounded Sobolev extension from the fixed Lipschitz domain $A$, followed by a cutoff in a fixed containing ball. The resulting zero-trace extensions tend to zero in $W^{1,N}$. Applying the critical Moser--Trudinger inequality~\cite{Trudinger1967} to these extensions gives a constant $\tau_A>0$ such that
\[
 \sup_{R\ge R_0}\int_A
 \exp(\tau_A|h_R|^{N/(N-1)})\, \d y<\infty
\]
for all sufficiently large $R$. For fixed $1\le p<\infty$, Young's
inequality gives $\sup_{R\ge R_0}\|e^{|h_R|}\|_{L^{2p}(A)}<\infty$.
The Sobolev embedding gives $\|h_R\|_{L^{2p}(A)}\to0$. Therefore
$|e^s-1|\le |s|e^{|s|}$ and H\"older's inequality yield
\[
 \|e^{h_R}-1\|_{L^p(A)}
 \le\|h_R\|_{L^{2p}(A)}\|e^{|h_R|}\|_{L^{2p}(A)}
 \to0\quad\text{ as }\,R\to\infty,
\]
proving~\eqref{eq:multiplicative-convergence}.
\end{proof}

\begin{rem}\label{rem:annular-normalizations}
If two fixed annuli $A$ and $E$ are contained in a common bounded connected Lipschitz set away from the origin, apply Proposition~\ref{prop:multiplicative-convergence} on the larger set.  The means of $\ln  v(R\cdot)$ over $A$ and $E$ then differ by $o(1)$, and therefore
\[
 \frac{c_A(R)}{c_E(R)}\to1\quad\text{ as }\, R\to\infty.
\]
The leading multiplicative scale is therefore independent of the fixed annulus.
\end{rem}

\begin{rem}
Convergence~\eqref{eq:multiplicative-convergence} is relative to $c_A(R)$. If $c_A(R)\asymp\ln R$, it gives only $v(R\,\cdot)-c_A(R)=o(\ln R)$ in $L^p(A)$. The gradient estimates for the Pohozaev boundary term are proved separately in Section~\ref{sec:annular-compactness}.
\end{rem}


We now combine \eqref{eq:multiplicative-convergence} with \eqref{eq:cone-growth}, where $\gamma>-N$, to prove Theorem~\ref{thm:threshold}. No upper growth assumption on $V$ is used in this section.

Fix
\[
 A_0:=\{y\in \RN:1<|y|<2\},\quad
 A_1:=\{y\in \RN:\tfrac12<|y|<3\},
\]
and write
\[
 m(R):=\dashint_{A_1}\ln  v(Ry)\, \d y,
 \quad c(R):=e^{m(R)}.
\]
The normalization is positive, and Proposition~\ref{prop:multiplicative-convergence} identifies $c(R)$ with the average of $v(R\cdot)$ on any fixed subset of $A_1$ of positive measure, up to a relative error tending to zero.

\subsection{Proof of Theorem~\ref{thm:threshold}}

We first use the cone hypothesis to obtain the lower bound for the annular normalization. The growth of $|V|$ and the exponential nonlinearity determine the logarithmic rate.

\begin{prop}\label{prop:cone-forcing}
Under \eqref{eq:cone-growth},
\be \label{eq:cone-normalization-lower}
 \liminf_{R\to\infty}\frac{c(R)}{\ln  R}
 \ge 1+\frac\gamma N>0.
\ee
\end{prop}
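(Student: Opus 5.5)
We argue by contradiction: the integrability $\int|V|e^{Nu}<\infty$ together with the cone growth will force $v$ to be large on cone annuli. Suppose the conclusion fails. Then there are $\delta>0$ and $R_k\to\infty$ with
\[
 c(R_k)\le(1-\delta)\Big(1+\frac\gamma N\Big)\ln R_k .
\]
Recall $u=b_u-v$, so $e^{Nu}=e^{Nb_u}e^{-Nv}$. Fix a nonempty open subcone $\Gamma'\Subset\Gamma$ and the set $S:=\Gamma'\cap A_0\subset A_1$, which has positive measure. For $R_k\ge R_0$, \eqref{eq:cone-growth} gives
\[
 \int_{R_kS}|V|e^{Nu}\,\d x
 \ge c\,R_k^{N+\gamma}\int_S e^{-Nv(R_ky)}\,\d y .
\]
Here the change of variables contributes $R_k^N$, and on $R_kS$ we have $|x|^\gamma\asymp R_k^\gamma$.

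The left side tends to $0$ as $k\to\infty$, being the tail integral of an $L^1$ function over regions escaping to infinity. For the lower bound on the right, apply Proposition~\ref{prop:multiplicative-convergence} on $A_1$. With $p=1$ it gives $\|v(R_k\cdot)/c(R_k)-1\|_{L^1(A_1)}\to0$. By Chebyshev, the set
\[
 S_k:=\{y\in S: v(R_ky)\le(1+\delta/2)c(R_k)\}
\]
satisfies $|S_k|\ge|S|/2$ for large $k$.

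On $S_k$ we have
\[
 e^{-Nv(R_ky)}\ge e^{-N(1+\delta/2)c(R_k)}\ge R_k^{-(1+\delta/2)(1-\delta)(N+\gamma)} .
\]
This uses $N+\gamma>0$, which holds because $\gamma>-N$, so the hypothesis on $c(R_k)$ gives an upper bound on the exponent. Since $(1+\delta/2)(1-\delta)<1$ for small $\delta>0$, the right side of the integral bound is at least
\[
 c\,|S|\,R_k^{(N+\gamma)\epsilon}\to\infty
\]
for some $\epsilon>0$. This contradicts the vanishing of the left side.

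This yields \eqref{eq:cone-normalization-lower}, and its right side is positive because $\gamma>-N$.

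The main subtlety is that $c(R)$ can be of order $\ln R$. Additive $L^p$ control of $v-c$ would then be useless, since it would only give errors of size $o(\ln R)$ inside an exponential. The multiplicative convergence \eqref{eq:multiplicative-convergence} is exactly the right tool: it only needs $v\le(1+\delta/2)c$ on a set of positive measure, and the slack $\delta$ absorbs the relative error.

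A minor point is to ensure that $S$ is Lipschitz-contained in $A_1$, which is connected and Lipschitz, so the proposition applies there. Measurability of the sector suffices.
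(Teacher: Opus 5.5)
Your argument is correct and is essentially the paper's proof. Both combine three ingredients: the cone lower bound on a fixed piece of $A_1\cap\Gamma$, the vanishing of the tail mass $\int_{RE}|f|\,\d x$, and the multiplicative convergence of $v(R\,\cdot)/c(R)$ on $A_1$. The difference is only in how the exponential is handled. The paper applies Jensen's inequality to obtain $\dashint_E v(Ry)\,\d y\ge(1+\gamma/N)\ln R+N^{-1}\ln(c/\delta_R)$ directly and then divides by $\ln R$. You instead argue by contradiction and use Chebyshev to find a set of measure at least $|S|/2$ on which $v(R_k\,\cdot)\le(1+\delta/2)c(R_k)$.
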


\begin{proof}
Choose a ball $E\Subset A_1\cap\Gamma$. Condition~\eqref{eq:cone-growth} gives $|V(Ry)|\ge cR^\gamma$ on $E$ for all sufficiently large $R$, also when $\gamma<0$.  With $f=Ve^{Nu}$ and $u=b_u-v$, it follows that
\[
 \delta_R:=\int_{RE}|f|\, \d x
 \ge cR^{N+\gamma}\int_Ee^{-Nv(Ry)}\, \d y.
\]
Since $\inf_{y\in E}|Ry|\to\infty$, absolute integrability gives $\delta_R\to0$ as $R\to\infty$.  Jensen's inequality yields
\[
 \delta_R\ge cR^{N+\gamma}
 \exp\Big(-\frac N{|E|}\int_Ev(Ry)\, \d y\Big),
\]
where the positive constant $c$ may change from line to line.  Consequently,
\[
 \dashint_E v(Ry)\, \d y
 \ge\Big(1+\frac\gamma N\Big)\ln  R
 +\frac1N\ln \frac c{\delta_R}.
\]
By Proposition~\ref{prop:multiplicative-convergence} on $A_1$, the left-hand side is $c(R)(1+o(1))$.  For large $R$, the final term on the right is nonnegative.  Division by $\ln R$ proves \eqref{eq:cone-normalization-lower}.
\end{proof}

Proposition~\ref{prop:cone-forcing} uses only $|V|$ on the cone. For the complementary upper estimate, integration over sublevel sets of $v$ gives an identity involving the signed function $f$. Define
\[
 G(t):=\int_{\{v<t\}}f\, \d x,\quad  t>1.
\]
The signed quantity $G(t)$ is a nonnegative boundary integral for almost every level.

\begin{prop}\label{prop:sublevel-boundary}
For almost every $t>1$,
\be \label{eq:sublevel-boundary}
 G(t)=\int_{\{v=t\}}
 \frac{F(\nabla v)^N}{|\nabla v|}\,\d\mathcal H^{N-1}\ge0,
\ee
where the integrand is defined to be zero when $\nabla v=0$.  Moreover,
\be \label{eq:G-limit}
 G(t)\to  M:=\int_{\RN}f\, \d x
 \quad\text{ as }\, t\to\infty,
\ee
and, for every $1<s<t$,
\be \label{eq:layer-energy}
 \int_{\{s<v<t\}}F(\nabla v)^N\, \d x
 =\int_s^tG(\tau)\,\d\tau.
\ee
In particular, $M\ge0$.
\end{prop}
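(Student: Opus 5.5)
The plan is to test the weak form of \eqref{eq:v-equation} with truncations of $v$ at a level $t$, pass to the limit in the truncation width, and identify the resulting boundary term by the coarea formula.

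\textbf{Step 1: the test function.} For $t>1$ and small $h>0$, set $\psi_h(s)=1$ for $s\le t$, $\psi_h(s)=1-(s-t)/h$ for $t<s<t+h$, and $\psi_h(s)=0$ for $s\ge t+h$. We would like to use $\phi=\psi_h(v)$. It is Lipschitz in $v$, but its support $\{v<t+h\}$ need not be compact, so we multiply by the cutoff $\eta_R$ of \eqref{eq:log-cutoff}. This gives
\[
\frac1h\int_{\{t<v<t+h\}}\eta_RF(\nabla v)^N\,\d x=\int f\psi_h(v)\eta_R\,\d x-\int\psi_h(v)a_N(\nabla v)\cdot\nabla\eta_R\,\d x.
\]
Here we used $a_N(\nabla v)\cdot\nabla v=F(\nabla v)^N$. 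The sign is right: $\diver a_N(\nabla v)=f$ means $-\int a_N\cdot\nabla\phi=\int f\phi$, and $\nabla\psi_h(v)=-h^{-1}\nabla v$ on the layer.

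\textbf{Step 2: removing the cutoff (the main obstacle).} The difficulty is the error term $\int\psi_h(v)a_N(\nabla v)\cdot\nabla\eta_R$, since $\{v<t+h\}$ may be unbounded. On $\{v<t+h\}$ we have $|a_N(\nabla v)|\le C(t+h)^{N-1}F(\nabla\ln v)^{N-1}$. Then H\"older's inequality, \eqref{eq:automatic-log-energy} and \eqref{eq:log-cutoff} bound this term by
\[
C(t+h)^{N-1}\,\|F(\nabla \ln v)\|_{L^N(\operatorname{supp}\nabla\eta_R)}^{N-1}\,(\ln R)^{-(N-1)/N}\to0 .
\]
The left side is monotone in $R$ and $f\in L^1$, so letting $R\to\infty$ gives
\[
\frac1h\int_{\{t<v<t+h\}}F(\nabla v)^N\,\d x=\int f\psi_h(v)\,\d x .
\]
This identity also shows, after letting $t$ vary, that $F(\nabla v)^N\in L^1(\{v<T\})$ for every $T$.

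\textbf{Step 3: the layer identity and \eqref{eq:sublevel-boundary}.} As $h\to0$, dominated convergence gives
\[
\int f\psi_h(v)\,\d x\to\int_{\{v\le t\}}f\,\d x .
\]
Integrating in $t$ over $(s,t)$ is cleaner than letting $h\to0$ directly. Since $\int_{\{v\le\tau\}}f=G(\tau)$ for all but countably many $\tau$, integrating the Step 2 identity and using the layer-cake/Fubini formula yields \eqref{eq:layer-energy}, since the left side averages to $\int_{\{s<v<t\}}F(\nabla v)^N$ as $h\to0$. Next apply coarea to the locally integrable function $F(\nabla v)^N/|\nabla v|$ on $\{\nabla v\ne0\}$; the critical set contributes nothing. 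This gives
\[
\int_{\{s<v<t\}}F(\nabla v)^N\,\d x=\int_s^t\int_{\{v=\tau\}}\frac{F(\nabla v)^N}{|\nabla v|}\,\d\mathcal H^{N-1}\,\d\tau .
\]
Comparing the two integrands in $\tau$ yields \eqref{eq:sublevel-boundary} for a.e.\ $t$, and nonnegativity is evident.

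\textbf{Step 4: the limit \eqref{eq:G-limit}.} Since $v$ is finite everywhere, $\{v<t\}\uparrow\RN$, so dominated convergence with $|f|\in L^1$ gives $G(t)\to M$. Finally, $M\ge0$ as a limit along a.e.\ $t$ of nonnegative numbers.
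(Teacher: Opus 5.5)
Your argument is correct and follows the paper's proof: the same test function $\eta_R\psi_h(v)$, the same H\"older/logarithmic-energy bound to remove the cutoff, and coarea plus a.e.\ identification of integrands. The only difference is the order of the last steps: you integrate in the level first to obtain \eqref{eq:layer-energy} and then differentiate, whereas the paper lets the width tend to zero at a.e.\ level (using that $\mu$ has at most countably many atoms) and integrates afterwards. One small slip: the cutoff error term in your Step~1 has the wrong sign, which is harmless since that term tends to zero.
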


\begin{proof}
For $t>1$ and $\var >0$, let $q_{t,\var }$ equal one on $(-\infty,t]$, decrease linearly to zero on $[t,t+\var ]$, and vanish on $[t+\var ,\infty)$.  Let $\eta_R$ be the logarithmic cutoffs used in \eqref{eq:log-cutoff}.  Testing $\diver a_N(\nabla v)=f$ with $\eta_Rq_{t,\var }(v)$ gives
\be \label{eq:boundary-test}
 \int_{\RN} f\eta_Rq_{t,\var }(v)\, \d x
 =\frac1\var \int_{\{t<v<t+\var \}}
       \eta_RF(\nabla v)^N\, \d x
 -\int_{\RN} q_{t,\var }(v)a_N(\nabla v)\cdot\nabla\eta_R\, \d x.
\ee
The test is justified by the Sobolev chain rule and compactly supported approximation.  On the support of $q_{t,\var }(v)$,
\[
 |a_N(\nabla v)|
 \le C(t+\var )^{N-1}F(\nabla\ln  v)^{N-1}.
\]
H\"older's inequality therefore bounds the last term in \eqref{eq:boundary-test} by
\[
 C(t+\var )^{N-1}
 \Big(\int_{\operatorname{supp}\nabla\eta_R}
               F(\nabla\ln  v)^N\, \d x\Big)^{(N-1)/N}
 \Big(\int_{\RN} F(\nabla\eta_R)^N\, \d x\Big)^{1/N},
\]
which tends to zero as $R\to\infty$. On $\{t<v<t+\var\}$, $F(\nabla v)^N\le(t+\var)^NF(\nabla\ln v)^N\in L^1(\RN)$. Dominated convergence in \eqref{eq:boundary-test} gives
\be \label{eq:epsilon-boundary}
 \int_{\RN} fq_{t,\var }(v)\, \d x
 =\frac1\var \int_{\{t<v<t+\var \}}
                  F(\nabla v)^N\, \d x.
\ee

The measure on $\R$ defined by $\mu(B)=\int_{\{v\in B\}}|f|\, \d x$ is finite and has at most countably many atoms.  Thus, for almost every $t$, the left-hand side of \eqref{eq:epsilon-boundary} converges to $G(t)$ as $\var \downarrow0$.  On the right, coarea followed by Lebesgue differentiation gives \eqref{eq:sublevel-boundary}.  No regular-value assumption is needed.

Since $v$ is finite everywhere and $f\in L^1(\RN)$, dominated convergence gives \eqref{eq:G-limit}.  Integrating \eqref{eq:sublevel-boundary} in $t$ and applying coarea once more proves \eqref{eq:layer-energy}.  The limit in \eqref{eq:G-limit}, together with the almost-everywhere nonnegativity of $G$, implies $M\ge0$.
\end{proof}

Multiplicative convergence in $L^p$ does not exclude a connected low-value set joining the two boundary spheres. Lemma~\ref{lem:annular-capacity} gives a uniform positive capacity bound for such sets; combined with \eqref{eq:log-convergence}, it yields a bounded sublevel component and the truncation in \eqref{eq:capacity-chain}.

For $E\subset A_1$, let $\mathcal C(E)$ consist of the functions
$\zeta\in W^{1,N}(A_1)$ that satisfy $\zeta\ge1$ almost everywhere on an open neighborhood of $E$. We define the relative $(1,N)$-Sobolev capacity of $E$ in $A_1$ by
\[
 \SobCap_{1,N}(E;A_1)
 :=\inf_{\zeta\in\mathcal C(E)}
 \int_{A_1}(|\nabla\zeta|^N+|\zeta|^N)\, \d y.
\]

\begin{lem}\label{lem:annular-capacity}
There exists $c_{\mathrm{cap}}>0$, depending only on $N$, such that every compact connected set $C\subset\overline{A_0}$ with $C\cap\pa B_1\ne\varnothing$ and $C\cap\pa B_2\ne\varnothing$ satisfies
\[
 \SobCap_{1,N}(C;A_1)\ge c_{\mathrm{cap}}.
\]
\end{lem}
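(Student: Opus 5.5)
We argue by contradiction and compactness, testing on radial segments. Suppose $C_k\subset\overline{A_0}$ are compact connected sets meeting both $\pa B_1$ and $\pa B_2$, and $\zeta_k\in\mathcal C(C_k)$ satisfy $\int_{A_1}(|\nabla\zeta_k|^N+|\zeta_k|^N)\,\d y\to0$. Truncating, we may assume $0\le\zeta_k\le1$; this does not increase the energy and keeps admissibility. The goal is to show that a function which is at least one on an open neighborhood of a connected set crossing the annulus carries a definite amount of $W^{1,N}$ energy. The constant depends only on $N$, since $A_0,A_1$ are fixed Euclidean annuli.

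The main tool is slicing by spheres together with one-dimensional Sobolev embedding. For $\rho\in(1,2)$, the sphere $\pa B_\rho$ meets $C$, because $C$ is connected and $|y|$ takes the values $1$ and $2$ on $C$. Hence for each such $\rho$ there is a point $p_\rho\in C\cap\pa B_\rho$. There $\zeta\ge1$ on a neighborhood, and on $\pa B_\rho$ we may assume $\zeta$ is continuous and $\ge1$ near $p_\rho$ for a.e.\ $\rho$, since $W^{1,N}$ traces on spheres lie in $W^{1,N}(\pa B_\rho)$ for a.e.\ $\rho$. The key estimate is then as follows. If $\int_{\pa B_\rho}(|\nabla_\tau\zeta|^N+|\zeta|^N)\,\d\mathcal H^{N-1}$ is small, then $\zeta|_{\pa B_\rho}$ cannot reach $1$ at some point while having small $L^N$ norm.

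The case $N\ge2$ needs care. On the $(N-1)$-dimensional sphere the exponent $N$ exceeds the dimension $N-1$, so Morrey's embedding $W^{1,N}(\pa B_\rho)\hookrightarrow C^{0,1/N}$ holds with a constant uniform for $\rho\in[1,2]$. This gives
\[
 1\le\sup_{\pa B_\rho}\zeta\le C_N\Big(\int_{\pa B_\rho}(|\nabla_\tau\zeta|^N+|\zeta|^N)\,\d\mathcal H^{N-1}\Big)^{1/N}.
\]
Integrating over $\rho\in(1,2)$ and using $|\nabla_\tau\zeta|\le|\nabla\zeta|$, we get
\[
 1\le C_N^N\int_{A_0}(|\nabla\zeta|^N+|\zeta|^N)\,\d y.
\]
This yields the lemma directly, with $c_{\mathrm{cap}}=C_N^{-N}$, so no contradiction argument is actually needed.

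The main obstacle is justifying the slice estimate for a general Sobolev $\zeta$. First approximate $\zeta$ by smooth functions in $W^{1,N}(A_1)$; the condition $\zeta\ge1$ on an open neighborhood $U\supset C$ is preserved under mollification on a slightly smaller neighborhood, at least approximately. Concretely, replace $\zeta$ by $\zeta*\varphi_\epsilon$, which is $\ge1$ on a neighborhood of $C$ for small $\epsilon$ by compactness of $C$ and the a.e.\ bound on $U$. Apply the estimate to the mollified functions and pass to the limit in energy. Fubini then guarantees the slice energies are finite for a.e.\ $\rho$, and continuity of the smooth approximant makes the pointwise value at $p_\rho$ meaningful. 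In dimension $N=2$ the slices are circles, and $W^{1,2}(S^1)\subset C^{0,1/2}$ works the same way.
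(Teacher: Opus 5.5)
Your argument is correct and is essentially the paper's proof. You slice by the spheres $\pa B_\rho$, $1<\rho<2$, use connectedness so that every slice meets $C$, apply the embedding $W^{1,N}(\mathbb S^{N-1})\hookrightarrow C^0$ (valid because $N>N-1$) uniformly in $\rho$, and integrate over $\rho$. The only differences are minor: you mollify to make the value at $p_\rho$ meaningful, whereas the paper works directly with the a.e.\ slices, which lie in $W^{1,N}$ and are $\ge1$ a.e.\ on a nonempty open subset of the sphere; and, as you note yourself, the contradiction setup is unnecessary.
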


\begin{proof}
Let $\zeta\in\mathcal C(C)$ and let $U\subset A_1$ be an open neighborhood of $C$ on which $\zeta\ge1$ almost everywhere.  Connectedness gives $C\cap\pa B_r\ne\varnothing$ for every $r\in[1,2]$.  For almost every such $r$, the restriction of $\zeta$ belongs to $W^{1,N}(\mathbb S^{N-1})$ and is at least one almost everywhere on the nonempty open set $\{\theta:r\theta\in U\}$.  Since $N>N-1$, the tangential Sobolev embedding gives
\[
 1\le C_N\int_{\mathbb S^{N-1}}
 (|\zeta(r\theta)|^N+
       |\nabla_\theta\zeta(r\theta)|^N)\,\d\theta.
\]
Integration over $1<r<2$ and the comparability of polar and Cartesian measures on this annulus yield
\[
 1\le C_N\int_{A_1}(|\zeta|^N+|\nabla\zeta|^N)\, \d y.
\]
Taking the infimum proves the claim.
\end{proof}

\begin{lem}\label{lem:bounded-sublevel-component}
Fix $0<\var <1/4$.  For all sufficiently large $R$ and every $t_R$ satisfying
\be \label{eq:tR-range}
 (1-2\var )c(R)<t_R<(1-\var )c(R),
\ee
the origin belongs to $\{v<t_R\}$, and its connected component $D_R$ satisfies
\[
 D_R\subset B_{2R}.
\]
\end{lem}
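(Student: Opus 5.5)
The proof has two parts. First we show that the origin lies in the sublevel set. Then we show that the component $D_R$ cannot cross the annulus $RA_0$, which we do by combining the capacity bound of Lemma~\ref{lem:annular-capacity} with the logarithmic convergence \eqref{eq:log-convergence} on $A_1$.

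\emph{Origin.} Since $v$ is continuous, $v(0)$ is a fixed finite number. By Proposition~\ref{prop:cone-forcing}, $c(R)\ge \tfrac12(1+\gamma/N)\ln R\to\infty$. Hence $t_R>(1-2\var)c(R)>v(0)$ for large $R$, so $0\in\{v<t_R\}$. The set $\{v<t_R\}$ is open, so $D_R$ is an open connected set containing $0$.

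\emph{Setting up the contradiction.} Suppose $D_R\not\subset B_{2R}$. A connected open set containing $0\in B_R$ and a point outside $B_{2R}$ contains a path $\sigma$ in $D_R$ joining these points. After rescaling by $R$, the path $\sigma/R$ meets both $\pa B_1$ and $\pa B_2$. Take the closure of a connected piece of $(\sigma/R)\cap\overline{A_0}$ that crosses from $\pa B_1$ to $\pa B_2$; for instance, use the subarc between the last exit time from $\overline{B_1}$ and the first subsequent hitting time of $\pa B_2$. This gives a compact connected set $C\subset\overline{A_0}$ meeting both spheres. On $C$ we have $v(Ry)\le t_R<(1-\var)c(R)$, so
\[
 h_R(y):=\ln v(Ry)-m(R)\le\ln(1-\var)<0\quad\text{on }C.
\]
By continuity of $v$, this strict inequality, with $\ln(1-\var)$ replaced by $\tfrac12\ln(1-\var)$, persists on an open neighborhood of $C$ in $A_1$.

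\emph{Test function and conclusion.} Set
\[
 \zeta:=\min\Big\{1,\frac{2(h_R)_-}{|\ln(1-\var)|}\Big\}\in W^{1,N}(A_1).
\]
Then $\zeta\ge1$ on a neighborhood of $C$, so $\zeta\in\mathcal C(C)$. Its norm satisfies
\[
 \int_{A_1}(|\nabla\zeta|^N+|\zeta|^N)\,\d y\le C_\var\|h_R\|_{W^{1,N}(A_1)}^N,
\]
where we use that truncation does not increase $|\nabla\cdot|$. By Proposition~\ref{prop:multiplicative-convergence} applied with $A=A_1$, the right-hand side tends to zero as $R\to\infty$, and the convergence is uniform in the choice of $t_R$ within the range \eqref{eq:tR-range}. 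So for $R$ large this quantity is smaller than $c_{\mathrm{cap}}$, contradicting Lemma~\ref{lem:annular-capacity}. Hence $D_R\subset B_{2R}$.

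\emph{Expected obstacle.} The main technical point is extracting the compact connected crossing set $C\subset\overline{A_0}$ from the path. The continuum must lie inside $\overline{A_0}$, not merely in $A_1$, and it must touch both spheres. The subarc argument above handles this, using continuity of $v$ (from $C^1_{loc}$) to work with a path.

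A second point is that the threshold $R_\ast$ beyond which the contradiction holds must be independent of $t_R$. This holds because the only inputs are the $W^{1,N}$ smallness of $h_R$ and the fixed gap $|\ln(1-\var)|$, neither of which depends on $t_R$.
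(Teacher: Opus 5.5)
Your proposal is correct and follows the paper's proof essentially step for step: $v(0)<t_R$ via Proposition~\ref{prop:cone-forcing}, a crossing continuum $C\subset\overline{A_0}$ extracted from a path in $D_R$, and the truncation $\min\{1,2(h_R)_-/|\ln(1-\var)|\}$, whose capacity is at most $C_\var\|h_R\|_{W^{1,N}(A_1)}^N\to0$, contradicting Lemma~\ref{lem:annular-capacity}. The only cosmetic difference is that the paper first bounds the capacity of the whole compact bad set $E_R^-=\{y\in\overline{A_0}:h_R(y)\le-a_\var\}$ and then observes that $C_R\subset E_R^-$, whereas you test directly on $C$.
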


\begin{proof}
Set $h_R(y)=\ln  v(Ry)-m(R)$ and $a_\var =-\ln (1-\var )>0$.  By Proposition~\ref{prop:multiplicative-convergence}, $h_R\to0$ in $W^{1,N}(A_1)$.  Continuity shows that
\[
 \zeta_R:=\min\Big\{1,\frac{2(h_R)_-}{a_\var }\Big\}
\]
belongs to $\mathcal C(E_R^-)$ for the compact set
\[
 E_R^-:=\{y\in\overline{A_0}:h_R(y)\le-a_\var \}.
\]
Therefore
\be \label{eq:bad-capacity}
 \SobCap_{1,N}(E_R^-;A_1)
 \le C_\var \|h_R\|_{W^{1,N}(A_1)}^N\to 0.
\ee
Proposition~\ref{prop:cone-forcing} gives $c(R)\to\infty$, so $v(0)<t_R$ for large $R$.

If $D_R$ contained a point outside $B_{2R}$, its openness and connectedness would provide a path from the origin to that point in $D_R$.  The part of the path between its last visit to $\pa  B_R$ before its first visit to $\pa  B_{2R}$, rescaled by $R^{-1}$, would give a compact connected set $C_R\subset\overline{A_0}$ with $C_R\cap\pa B_1\ne\varnothing$ and $C_R\cap\pa B_2\ne\varnothing$.  For $y\in C_R$,
\[
 v(Ry)<t_R<(1-\var )c(R),
 \quad h_R(y)<-a_\var .
\]
Thus $C_R\subset E_R^-$, contradicting \eqref{eq:bad-capacity} and Lemma~\ref{lem:annular-capacity}.
\end{proof}

\begin{proof}[Proof of Theorem~\ref{thm:threshold}]
Fix $\var\in (0, 1/4)$ and choose any $t_R$ in \eqref{eq:tR-range}.  Let $D_R$ be given by Lemma~\ref{lem:bounded-sublevel-component} and set
\[
 s_R:=t_R^{1/2}.
\]
Then $s_R\to\infty$ and $s_R/t_R\to0$.  Fix $r>0$ and choose a constant $\Lambda=\Lambda(F)>0$ so that $\overline{B_{2R}}\subset\W_{\Lambda R}$ for every $R>0$.  Since $v$ is bounded on $\overline{\W_r}$, for large $R$ the connected set $\overline{\W_r}$ lies in $\{v<s_R\}$ and contains the origin.  Consequently,
\[
 \overline{\W_r}\subset D_R\subset B_{2R}\Subset\W_{\Lambda R}.
\]

Define on $\W_{\Lambda R}$
\[
 \psi_R(x):=
\left\{\begin{aligned}
  &\max\Big\{0,\dfrac{v(x)-s_R}{t_R-s_R}\Big\},&&x\in D_R,\\
 &1,&&x\notin D_R.
\end{aligned}\right.
\]
Every point of $\pa  D_R$ has value $v=t_R$.  Hence $\psi_R$ is continuous across $\pa  D_R$.  Moreover, $v$ is Lipschitz on the compact outer ball, and the same is true of $\psi_R$: if one point lies in $D_R$ and the other outside, apply the Lipschitz bound for $v$ at a boundary point between them on the joining segment.  Thus $\psi_R\in W^{1,N}(\W_{\Lambda R})$, it vanishes near $\overline{\W_r}$, and it equals one near $\pa \W_{\Lambda R}$.  This construction avoids any regularity assumption on the sublevel boundary.

Formula~\eqref{eq:wulff-capacity} and Proposition~\ref{prop:sublevel-boundary} give
\begin{align}
 \frac{N\kap _N}{[\ln (\Lambda R/r)]^{N-1}}
 &\le\int_{\W_{\Lambda R}}F(\nabla\psi_R)^N\, \d x\notag\\
 &\le\frac1{(t_R-s_R)^N}
       \int_{\{s_R<v<t_R\}}F(\nabla v)^N\, \d x\notag\\
 &=\frac1{(t_R-s_R)^N}\int_{s_R}^{t_R}G(\tau)\,\d\tau.
 \label{eq:capacity-chain}
\end{align}
The Sobolev gradient of $\psi_R$ equals $(t_R-s_R)^{-1}\nabla v$ almost everywhere on $D_R\cap\{s_R<v<t_R\}$ and vanishes elsewhere. On the complement of $D_R$ the function is identically one, and the Sobolev gradient of a function vanishes almost everywhere on each of its level sets. The same fact applies to the level sets $v=s_R$ and $v=t_R$. No measure-zero or regular-value property of $\pa  D_R$ is used in \eqref{eq:capacity-chain}.  Absolute integrability also gives
\be \label{eq:G-uniform}
 \sup_{\tau\ge s_R}|G(\tau)-M|
 \le\int_{\{v\ge s_R\}}|f|\, \d x\to 0.
\ee
We already know that $M\ge0$.  Combining \eqref{eq:capacity-chain} and \eqref{eq:G-uniform} yields
\be \label{eq:tR-upper}
 \frac{t_R-s_R}{\ln (\Lambda R/r)}
 \le\Big(\frac{M+o(1)}{N\kap _N}\Big)^{1/(N-1)},
\ee
where $M+o(1)$ can be chosen nonnegative.  If $M=0$, this contradicts $s_R/t_R\to0$, \eqref{eq:tR-range}, and the positive lower bound \eqref{eq:cone-normalization-lower}.  Thus $M>0$.

Writing $\gamma_0=(M/(N\kap _N))^{1/(N-1)}$ and using $s_R/t_R\to0$ in \eqref{eq:tR-upper}, we obtain
\[
 \limsup_{R\to\infty}\frac{c(R)}{\ln  R}
 \le\frac{\gamma_0}{1-2\var }.
\]
Letting $\var\downarrow0$ and using \eqref{eq:cone-normalization-lower}, we obtain
\be \label{eq:annular-scale-bounds}
 1+\frac\gamma N
 \le\liminf_{R\to\infty}\frac{c(R)}{\ln  R}
 \le\limsup_{R\to\infty}\frac{c(R)}{\ln  R}
 \le\gamma_0.
\ee
In particular, $M\ge N\kap _N(1+\gamma/N)^{N-1}$.
\end{proof}

At equality, \eqref{eq:annular-scale-bounds} determines the leading annular profile.

\begin{cor}
Under the hypotheses of Theorem~\ref{thm:threshold}, if $\gamma_0=1+\gamma/N$, then $c(R)/\ln  R\to1+\gamma/N$ and, for every bounded connected Lipschitz set $A\Subset\RN\setminus\{0\}$ and every $p\in [1,\infty)$,
\be \label{eq:equality-Lp-limit}
 \frac{u(R\,\cdot)}{\ln  R}
 \to -\Big(1+\frac\gamma N\Big)
 \quad\text{ in }\, L^p(A)\quad\text{ as }\, R\to\infty.
\ee
\end{cor}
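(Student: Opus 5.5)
The argument reads the two conclusions off the sandwich \eqref{eq:annular-scale-bounds} and then transfers the limit of the normalization $c(R)$ to $u$ by the multiplicative convergence of Proposition~\ref{prop:multiplicative-convergence}.

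First, if $\gamma_0=1+\gamma/N$, then in \eqref{eq:annular-scale-bounds} the outer bounds coincide. Hence
\[
 \lim_{R\to\infty}\frac{c(R)}{\ln R}=1+\frac\gamma N .
\]
This is the first assertion, and it concerns only the fixed annulus $A_1$.

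Second, we pass to an arbitrary bounded connected Lipschitz set $A\Subset\RN\setminus\{0\}$. Choose a bounded connected Lipschitz set $\tilde A\Subset\RN\setminus\{0\}$ containing both $A$ and $A_1$; for instance, take a large Euclidean annulus. By Remark~\ref{rem:annular-normalizations}, applied on $\tilde A$, we get $c_A(R)/c(R)\to1$, and therefore $c_A(R)/\ln R\to1+\gamma/N$. Proposition~\ref{prop:multiplicative-convergence} gives $v(R\cdot)/c_A(R)\to1$ in $L^p(A)$ for every $1\le p<\infty$. Multiplying by the scalar $c_A(R)/\ln R$, which converges, yields
\[
 \frac{v(R\,\cdot)}{\ln R}\to1+\frac\gamma N\quad\text{in }L^p(A).
\]

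Finally, $u=b_u-v$ with $b_u$ a fixed constant, so $b_u/\ln R\to0$ uniformly on $A$. Since $A$ has finite measure, this term also tends to zero in $L^p(A)$. Subtracting it gives \eqref{eq:equality-Lp-limit}.

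The only point needing care is the comparison of normalizations between $A$ and $A_1$ when they are far apart or disjoint. This is handled by enlarging to the common connected set $\tilde A$, since the $W^{1,N}$ convergence in \eqref{eq:log-convergence} holds on any such fixed set. There is no real obstacle beyond this bookkeeping.
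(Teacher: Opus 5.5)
Your proposal is correct and follows the paper's proof. Equality collapses the sandwich \eqref{eq:annular-scale-bounds}, Remark~\ref{rem:annular-normalizations} compares the normalizations on a common connected set, and Proposition~\ref{prop:multiplicative-convergence} gives $v(R\,\cdot)/\ln R\to1+\gamma/N$ in $L^p(A)$, after which $u=b_u-v$ finishes; you only spell out the enlargement to a large connected annulus that the paper leaves implicit.
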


\begin{proof}
At equality, both outer terms in \eqref{eq:annular-scale-bounds} equal $1+\gamma/N$.  The comparison of the annular normalizations and Proposition~\ref{prop:multiplicative-convergence} imply $v(R\cdot)/\ln  R\to1+\gamma/N$ in every finite $L^p(A)$.  Since $u=b_u-v$ and $b_u/\ln  R\to0$, this gives \eqref{eq:equality-Lp-limit}.
\end{proof}

\begin{rem}
Equality determines the leading annular profile in \eqref{eq:equality-Lp-limit}, not a bounded remainder. The term $-(2/N)\ln\ln F^0$ in \eqref{eq:equality-profile} gives finite absolute mass at the limiting growth rate, and \eqref{eq:equality-mass} proves sharpness.
\end{rem}

\begin{rem}\label{rem:measurable-sector}
The open-cone hypothesis in Theorem~\ref{thm:threshold} can be weakened. The proof of Proposition~\ref{prop:cone-forcing} only uses a fixed measurable set $E\subset A_1$ of positive measure such that $|V(Ry)|\ge cR^\gamma$ for all sufficiently large $R$ and almost every $y\in E$. Hence the open cone may be replaced by a measurable angular sector of positive surface measure. No global polynomial upper bound on $V$ is needed for this extension.
\end{rem}

\section{Annular compactness and the Pohozaev boundary limit}\label{sec:annular-compactness}

We compare $v$ with $F$-$N$-harmonic functions on annuli of increasing logarithmic width. Lemma~\ref{lem:small-mass-annuli} gives the small-mass scales, and \eqref{eq:comparison-gradient} estimates the critical gradient error under \eqref{eq:polynomial-upper}.

The external results are Ciraolo--Li's exponential comparison estimate and punctured-space harmonic rigidity theorem~\cite[Lemma~2.3 and Proposition~6.8]{CiraoloLi2024}. No pointwise expansion at infinity is assumed.

\subsection{Annular gradient criterion and boundary comparison}

We first compare the Pohozaev vector field with that of the logarithmic profile.

\begin{lem}\label{lem:pohozaev-boundary-comparison}
Let $u$ satisfy the hypotheses of Theorem~\ref{thm:threshold}, set
\[
 u_0=-\gamma_0\ln  F^0,\quad  q=\nabla(u-u_0),
\]
and let $\mathcal P_u=(x\cdot\nabla u)a_N(\nabla u)-x\Phi_N(\nabla u)$. On $\pa \W_R$,
\be \label{eq:log-profile-pohozaev}
 \mathcal P_{u_0}=\frac{N-1}{N}\frac{\gamma_0^N x}{(F^0)^N},\quad
 \int_{\pa \W_R}\mathcal P_{u_0}\cdot\nu\,\d\mathcal H^{N-1}
 =(N-1)\kap _N\gamma_0^N,
\ee
and
\be \label{eq:pohozaev-difference-N}
 |\mathcal P_u-\mathcal P_{u_0}|
 \le C R(R^{-1}+|q|)^{N-1}|q|
 \le C(R^{2-N}|q|+R|q|^N),
\ee
where $C$ depends only on $F$, $N$, and $\gamma_0$.
\end{lem}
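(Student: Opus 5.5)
The plan is a direct computation for \eqref{eq:log-profile-pohozaev}, followed by a mean-value estimate on the map $\xi\mapsto\mathcal P(\xi):=(x\cdot\xi)a_N(\xi)-x\Phi_N(\xi)$ for \eqref{eq:pohozaev-difference-N}.

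\emph{The logarithmic profile.} We have $\nabla u_0=-\gamma_0\nabla F^0/F^0$. By \eqref{eq:polar-identities}, $x\cdot\nabla u_0=-\gamma_0$. Since $a_N$ is odd and $(N-1)$-homogeneous, \eqref{eq:polar-identities} also gives
\[
 a_N(\nabla u_0)=-\gamma_0^{N-1}\frac{x}{(F^0)^N},
 \qquad
 \Phi_N(\nabla u_0)=\frac{\gamma_0^N}{N(F^0)^N}.
\]
Substituting,
\[
 \mathcal P_{u_0}=\gamma_0^N\frac{x}{(F^0)^N}-\frac{\gamma_0^N}{N}\frac{x}{(F^0)^N}=\frac{N-1}{N}\frac{\gamma_0^N x}{(F^0)^N}.
\]
On $\pa W_R$ we have $F^0=R$, so \eqref{eq:minkowski} gives the flux
\[
 \frac{N-1}{N}\gamma_0^NR^{-N}\cdot N\kap_NR^N=(N-1)\kap_N\gamma_0^N.
\]

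\emph{The difference estimate.} Write $\xi_0=\nabla u_0(x)$, so $|\xi_0|\asymp\gamma_0/R$ on $\pa W_R$, and $\xi=\xi_0+q$. Expand
\[
 \mathcal P(\xi)-\mathcal P(\xi_0)=(x\cdot q)\,a_N(\xi)+(x\cdot\xi_0)\big(a_N(\xi)-a_N(\xi_0)\big)-x\big(\Phi_N(\xi)-\Phi_N(\xi_0)\big).
\]
The three terms are bounded as follows.
\begin{itemize}
\item[(i)] The first term: by \eqref{eq:a-growth}, $|(x\cdot q)a_N(\xi)|\le CR|q|(|\xi_0|+|q|)^{N-1}$.
\item[(ii)] The second term: $|x\cdot\xi_0|=\gamma_0$, and \eqref{eq:a-difference} gives $|a_N(\xi)-a_N(\xi_0)|\le C(|\xi_0|+|q|)^{N-2}|q|$. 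Since $R^{-1}\lesssim|\xi_0|+|q|$, this is again at most $CR(R^{-1}+|q|)^{N-1}|q|$.
\item[(iii)] The third term: the mean value theorem on the segment, with $|a_N|\le C|\zeta|^{N-1}$ there, gives $|\Phi_N(\xi)-\Phi_N(\xi_0)|\le C(|\xi_0|+|q|)^{N-1}|q|$; multiplying by $|x|\le CR$ gives the same bound.
\end{itemize}
Using $|\xi_0|\le CR^{-1}$, the three bounds together give the first inequality in \eqref{eq:pohozaev-difference-N}. The second inequality follows from $(a+b)^{N-1}\le C(a^{N-1}+b^{N-1})$:
\[
 R\,(R^{-1}+|q|)^{N-1}|q|\le C\big(R^{2-N}|q|+R|q|^N\big).
\]

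\emph{Main obstacle.} The only delicate point is keeping the constants depending only on $F$, $N$, and $\gamma_0$ when $N=2$ or when $\xi_0$ and $\xi$ have very different sizes. Using \eqref{eq:a-difference} and the mean-value bound in terms of $(|\xi|+|\xi_0|)$, which is automatically at most $C(R^{-1}+|q|)$, avoids any case distinction. When $\gamma_0$ is bounded below (indeed $\gamma_0>0$ by Theorem~\ref{thm:threshold}), the constants depend only on $F$, $N$, and an upper bound for $\gamma_0$.
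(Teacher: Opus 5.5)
Your proposal is correct and follows essentially the same route as the paper. The profile identity comes from \eqref{eq:polar-identities} and the Minkowski formula \eqref{eq:minkowski}; the difference bound comes from \eqref{eq:a-difference}, $D\Phi_N=a_N$, $|\nabla u_0|\le C/R$ on $\partial W_R$, and the splitting $(a+b)^{N-1}b\le C(a^{N-1}b+b^N)$, and your three-term expansion just writes out the step the paper leaves implicit.
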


\begin{proof}
By~\eqref{eq:polar-identities},
$a_N(\nabla u_0)=-\gamma_0^{N-1}x/(F^0)^N$ and
$x\cdot\nabla u_0=-\gamma_0$. Substitution in $\mathcal P_{u_0}$ and
\eqref{eq:minkowski} give~\eqref{eq:log-profile-pohozaev}.
The first inequality in~\eqref{eq:pohozaev-difference-N} follows from
\eqref{eq:a-difference}, $D\Phi_N=a_N$, and $|\nabla u_0|\le C/R$ on
$\pa W_R$. The second follows from
$(a+b)^{N-1}b\le C(a^{N-1}b+b^N)$ with $a=R^{-1}$.
\end{proof}

For $R>0$, define the annular gradient error at scale $R$ by
\[
 \mathfrak G_N(R):=
 \int_{\W_{2R}\setminus \W_R}
 F (\nabla u+\gamma_0\nabla\ln  F^0 )^N\, \d x.
\]
Vanishing annular gradient error gives the two boundary limits in \eqref{eq:PB-boundary}.

\begin{prop}
\label{prop:annular-gradient-error-PB}
Let \(u\) satisfy the assumptions of Theorem~\ref{thm:threshold}.
If
\[
 \liminf_{R\to\infty}\mathfrak G_N(R)=0,
\]
then \(u\) satisfies the Pohozaev condition at infinity.
\end{prop}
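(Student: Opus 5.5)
Choose $R_k\to\infty$ with $\mathfrak G_N(R_k)\to0$; along this sequence we produce radii $r_k\in[R_k,2R_k]$ satisfying both limits in \eqref{eq:PB-boundary}. Write $q=\nabla(u-u_0)$ with $u_0=-\gamma_0\ln F^0$. Since $F(\xi)\ge c_F|\xi|$, we have
\[
 \int_{W_{2R_k}\setminus W_{R_k}}|q|^N\,\d x\le C\,\mathfrak G_N(R_k)\to0 .
\]
The plan is to use the coarea formula with $F^0$ on this annulus, as in Lemma~\ref{lem:good-radii}, and to pick one radius where three nonnegative boundary quantities are simultaneously small. The first is $r\int_{\pa W_r}|f|$; it is controlled by $\int_{W_{2R_k}\setminus W_{R_k}}|f|\to0$, since $f=Ve^{Nu}\in L^1$. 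The second is $r\int_{\pa W_r}|q|^N$; it is controlled by $\mathfrak G_N(R_k)$. The third is $r^{N-1}\cdot r^{-1}\int_{\pa W_r}|q|$, which handles the lower-order term.

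Using coarea with $|\nabla F^0|\le C$ bounded above and below, and averaging over $r\in[R_k,2R_k]$ with weight $\d r/r$, the measure of radii where any one of these quantities exceeds a fixed multiple of its annular average is at most a third of the $\d r/r$-measure. A common good radius $r_k$ therefore exists, and it satisfies
\[
 r_k\int_{\pa W_{r_k}}|f|\,\d\mathcal H^{N-1}\le C\int_{W_{2R_k}\setminus W_{R_k}}|f|\,\d x\to0,
 \qquad
 r_k\int_{\pa W_{r_k}}|q|^N\,\d\mathcal H^{N-1}\le C\,\mathfrak G_N(R_k)\to0 .
\]
This gives the first limit in \eqref{eq:PB-boundary} directly.

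For the second limit we apply \eqref{eq:pohozaev-difference-N} on $\pa W_{r_k}$:
\[
 \Big|\int_{\pa W_{r_k}}(\mathcal P_u-\mathcal P_{u_0})\cdot\nu\Big|\le C\,r_k^{2-N}\int_{\pa W_{r_k}}|q|+C\,r_k\int_{\pa W_{r_k}}|q|^N .
\]
The second term tends to zero by the choice of $r_k$. For the first term we use H\"older on $\pa W_{r_k}$, whose area is comparable to $r_k^{N-1}$:
\[
 r_k^{2-N}\int|q|\le r_k^{2-N}\big(r_k^{N-1}\big)^{(N-1)/N}\Big(\int|q|^N\Big)^{1/N}=\Big(r_k\int_{\pa W_{r_k}}|q|^N\Big)^{1/N}\to0 .
\]
Hence the third quantity needs no separate selection. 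Combined with \eqref{eq:log-profile-pohozaev}, this yields $\int_{\pa W_{r_k}}\mathcal P_u\cdot\nu\to(N-1)\kap_N\gamma_0^N$.

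Finally, pass to a strictly increasing subsequence of $r_k$, as allowed in Definition~\ref{def:pohozaev-infinity}. The main obstacle is the bookkeeping of the scaling exponents, namely checking that the H\"older bound for the lower-order term $R^{2-N}|q|$ closes with the exact powers of $r_k$. It does, because the exponent $N$ matches the dimension. The other point to verify is that a single radius can be chosen for both limits, and the simultaneous Chebyshev selection above handles this. No regularity beyond $u\in C^1$ is needed, since the boundary traces on the $C^2$ Wulff spheres are classical.
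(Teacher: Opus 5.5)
Your proposal is correct and follows the paper's proof essentially step by step. The steps match: a coarea selection of $r_k\in[R_k,2R_k]$ making $r_k\int_{\pa W_{r_k}}(|f|+|q|^N)\,\d\mathcal H^{N-1}\to0$, then \eqref{eq:pohozaev-difference-N} with H\"older on $\pa W_{r_k}$ reducing the lower-order term to $(r_k\int_{\pa W_{r_k}}|q|^N)^{1/N}\to0$, and finally \eqref{eq:log-profile-pohozaev}. The only cosmetic difference is that the paper obtains the common radius by applying coarea once to the sum $|f|+|q|^N$, rather than through your simultaneous Chebyshev selection.
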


\begin{proof}
Choose \(R_j\to\infty\) such that \(\mathfrak G_N(R_j)\to0\), and set
\[
 w:=u+\gamma_0\ln  F^0,
 \quad q:=\nabla w.
\]
Since \(f\in L^1(\RN)\),
\[
 \int_{\W_{2R_j}\setminus \W_{R_j}}|f|\, \d x\to 0.
\]
Also, $\mathfrak G_N(R_j)\to0$ and norm equivalence give $\int_{\W_{2R_j}\setminus\W_{R_j}}|q|^N\,\d x\to0$.  Applying the coarea formula on
\(\W_{2R_j}\setminus \W_{R_j}\) to \(|f|+|q|^N\), we find
\(r_j\in[R_j,2R_j]\) such that
\be \label{eq:annular-good-radius-N}
 r_j\int_{\pa  \W_{r_j}}
  (|f|+|q|^N )\,\d\mathcal H^{N-1}\to 0.
\ee
Formula~\eqref{eq:annular-good-radius-N} gives the vanishing boundary mass in \eqref{eq:PB-boundary}.

Let \(u_0=-\gamma_0\ln  F^0\).  Estimate~\eqref{eq:pohozaev-difference-N} in Lemma~\ref{lem:pohozaev-boundary-comparison} gives
\[
 |\mathcal P_u-\mathcal P_{u_0}|
 \le C_F (r_j^{2-N}|q|+r_j|q|^N )
 \quad\hbox{ on }\,\pa  \W_{r_j}.
\]
Writing
\[
 A_j:=\int_{\pa  \W_{r_j}}|q|^N\,\d\mathcal H^{N-1},
\]
condition \eqref{eq:annular-good-radius-N} gives \(r_jA_j\to0\). H\"older's inequality and
surface scaling, \(\mathcal H^{N-1}(\pa  \W_r)\asymp r^{N-1}\), imply
\[
 r_j^{2-N}\int_{\pa  \W_{r_j}}|q|\,\d\mathcal H^{N-1}
 \le C_F(r_jA_j)^{1/N}.
\]
Hence
\[
 \int_{\pa  \W_{r_j}}
 |\mathcal P_u-\mathcal P_{u_0}|\,\d\mathcal H^{N-1}
 \le C_F(r_jA_j)^{1/N}+C_Fr_jA_j\to 0.
\]
The formula \eqref{eq:log-profile-pohozaev} for the logarithmic profile now gives \eqref{eq:PB-boundary}.
\end{proof}

By Proposition~\ref{prop:annular-gradient-error-PB}, it remains to prove $\liminf_{R\to\infty}\mathfrak G_N(R)=0$ under \eqref{eq:polynomial-upper}.

\subsection{Small-mass annuli and \texorpdfstring{$F$-$N$-harmonic}{F-N-harmonic} functions}

For \(n\ge0\), set
\be \label{eq:dyadic-f-mass}
 E_n:=\W_{2^{n+1}}\setminus \W_{2^n},
 \quad
 \mu_n:=\int_{E_n}|f|\, \d x.
\ee
Then $\sum_{n=0}^{\infty}\mu_n<\infty$. Lemma~\ref{lem:small-mass-annuli} gives annuli with sufficiently small absolute mass for \eqref{eq:comparison-gradient}.

\begin{lem}\label{lem:small-mass-annuli}
Let \((\mu_n)_{n\ge0}\in\ell^1\) be nonnegative and extend it by zero to
negative indices.  For every \(A>0\), there are integers \(n_j\to\infty\)
such that, with
\[
 L_j:=\lfloor A\ln (n_j+2)\rfloor,
 \quad
 \delta_j:=\sum_{|m-n_j|\le2L_j+4}\mu_m,
\]
one has
\be \label{eq:small-mass-annuli-rate}
 n_j\delta_j\to0\quad\text{ as }\,j\to\infty.
\ee
In particular, \(L_j=o(n_j)\) and \(n_j-2L_j\to\infty\).
\end{lem}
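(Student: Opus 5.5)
We argue by contradiction, using only $\sum_m\mu_m<\infty$.

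\emph{Reduction.} Write $S=\sum_m\mu_m$ and set $W_n:=2L(n)+4$ with $L(n)=\lfloor A\ln(n+2)\rfloor$. Put
\[
\delta(n):=\sum_{|m-n|\le W_n}\mu_m .
\]
It suffices to prove
\[
\liminf_{n\to\infty} n\,\delta(n)=0 .
\]
Indeed, this lets us choose $n_j\to\infty$ with $n_j\delta_j\to0$. The remaining two claims are immediate, since $L_j=O(\ln n_j)$.

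\emph{Contradiction hypothesis.} Suppose instead that there are $c>0$ and $n_*$ with $n\,\delta(n)\ge c$ for all $n\ge n_*$. Then $\delta(n)\ge c/n$, and we sum this over a sparse set of $n$.

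\emph{Disjoint windows.} For large $K$, the windows $I_n=[n-W_n,n+W_n]$ with $K\le n\le 2K$ all have width at most $w_K:=2(2A\ln(2K+2)+4)+1=O(\ln K)$.

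Pick integers $K\le n_1<n_2<\dots$ in $[K,2K]$ with consecutive gaps greater than $w_K$. These windows are pairwise disjoint, and there are at least $\lfloor K/(w_K+1)\rfloor\ge cK/\ln K$ of them for large $K$. Hence
\[
\sum_{[K-w_K,\,2K+w_K]}\mu_m\ \ge\ \sum_i \delta(n_i)\ \ge\ \frac{c'K}{\ln K}\cdot\frac{c}{2K}=\frac{c''}{\ln K}.
\]

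\emph{Summing dyadically.} Take $K=2^k$. The blocks $[2^k-w,\,2^{k+1}+w]$ overlap only boundedly, because $w=O(k)\ll 2^k$. Summing over $k$ therefore gives
\[
\sum_k \frac{1}{\ln 2^k}\sim\sum_k\frac1k=\infty\ \lesssim\ S<\infty ,
\]
which is a contradiction. Hence $\liminf_n n\delta(n)=0$.

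The only delicate point is the counting in the disjoint-windows step. It works because the window width is only logarithmic in $n$, and the resulting $1/\ln K$ loss still yields a divergent harmonic series over dyadic blocks. The extension of $\mu$ by zero to negative indices plays no role for large $n$.
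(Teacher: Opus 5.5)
Your argument is correct and is essentially the paper's proof. Both assume $n\,\delta(n)\ge c$ eventually, use the $O(\ln K)$ window width to get $\sum_{m\sim K}\mu_m\gtrsim 1/\ln K$, and contradict summability through $\sum_k 1/k=\infty$ over geometric scales. The only difference is cosmetic: the paper sums over all $K\le n\le 2K$ and divides by the overlap multiplicity (with disjoint blocks $[K/2,3K]$, $K=8^k$), whereas you select a disjoint subfamily of windows and use dyadic blocks with bounded overlap.
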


\begin{proof}
Let
\[
 L(n)=\lfloor A\ln (n+2)\rfloor,
 \quad
 b_n=\sum_{|m-n|\le2L(n)+4}\mu_m.
\]
If \(\liminf_{n\to\infty}nb_n>0\), then for some \(\var >0\) and all sufficiently
large \(n\), \(b_n\ge\var /n\).  Summing over \(K\le n\le2K\) gives
\[
 \sum_{n=K}^{2K}b_n\ge\var\sum_{n=K}^{2K}\frac1n\ge\frac{\var}{2}.
\]
For large \(K\), every index \(m\) counted on the left belongs to
\([K/2,3K]\), and each \(m\) occurs at most \(C_A\ln K\) times.
Consequently,
\[
 \sum_{\substack{m\in\mathbb Z\\K/2\le m\le3K}}\mu_m
 \ge \frac{c\var }{C_A\ln  K}.
\]
For \(K=8^k\), the intervals \([K/2,3K]\) are pairwise disjoint, but
\(\sum_{k=1}^{\infty}1/\ln (8^k)=\infty\).  This contradicts \(\sum_{m=0}^{\infty}\mu_m<\infty\).
Therefore \(\liminf_{n\to\infty}nb_n=0\). We may thus choose a subsequence $n_j\to\infty$ such that $n_jb_{n_j}\to0$, which gives \eqref{eq:small-mass-annuli-rate}.
\end{proof}

The exponential comparison estimate gives the following $L^p$ and critical gradient estimates, uniformly in the domain and the boundary data.

\begin{lem}\label{lem:nonlinear-comparison}
Let \(\Om \subset\RN\) be a bounded Lipschitz domain and let
\(U,h\in W^{1,N}(\Om )\) satisfy \(U-h\in W_0^{1,N}(\Om )\) and
\be \label{eq:comparison-equations-general}
 \diver a_N(\nabla U)=g,
 \quad
 \diver a_N(\nabla h)=0
 \quad\text{ in }\, \Om ,
\ee
where \(g\in L^1(\Om )\cap L^\infty(\Om )\). Set
\[
 \delta:=\|g\|_{L^1(\Om )},
 \quad \Lambda:=\|g\|_{L^\infty(\Om )}.
\]
If \(\delta=0\), then \(U=h\). If \(\delta>0\), define the dimensionless entropy
\[
 \mathcal H_\Om (g)
 :=\frac1\delta\int_\Om  |g|\ln \frac{|\Om ||g|}{\delta}\,\d x,
 \quad 0\ln 0:=0.
\]
It satisfies $0\le\mathcal H_\Om (g)\le\ln (\Lambda|\Om |/\delta)$. For every finite \(p\ge1\),
\be \label{eq:comparison-Lp}
 \|U-h\|_{L^p(\Om )}
 \le C_{p,F,N}|\Om |^{1/p}\delta^{1/(N-1)},
\ee
and
\begin{align}\label{eq:comparison-gradient}
 \int_\Om  F(\nabla U-\nabla h)^N\,\d x
 &\le C_{F,N}\delta^{N/(N-1)}(1+\mathcal H_\Om (g))\notag\\
 &\le C_{F,N}\delta^{N/(N-1)}
 \Big[1+\ln \Big(e+\frac{\Lambda|\Om |}{\delta}\Big)\Big].
\end{align}
\end{lem}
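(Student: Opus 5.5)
The plan is to reduce everything to the exponential comparison estimate \cite[Lemma~2.3]{CiraoloLi2024} and to the strong monotonicity \eqref{eq:a-monotonicity}. Write $w:=U-h\in W^{1,N}_0(\Om)$. If $\delta=0$, then $g=0$ a.e. Testing both equations in \eqref{eq:comparison-equations-general} with $w$ gives $\int_\Om(a_N(\nabla U)-a_N(\nabla h))\cdot\nabla w\,\d x=0$, so \eqref{eq:a-monotonicity} yields $\nabla w=0$ and hence $U=h$. The entropy bounds are elementary. Put $\rho:=|g|/\delta$, which is a probability density on $\Om$. Then $\mathcal H_\Om(g)=\int_\Om\rho\ln(|\Om|\rho)\,\d x\ge0$ by Jensen's inequality applied to $s\ln s$ with respect to the normalized measure $\d x/|\Om|$. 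The upper bound follows from $\ln(|\Om|\rho)\le\ln(\Lambda|\Om|/\delta)$ on the support of $\rho$.

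For \eqref{eq:comparison-Lp}, I would invoke the exponential comparison of Ciraolo--Li. In the form I expect to use, there are constants $c_{F,N},C_{F,N}$ such that
\[
 \int_\Om\exp\Big(\frac{c_{F,N}|w|}{\delta^{1/(N-1)}}\Big)\,\d x\le C_{F,N}|\Om|.
\]
This is the anisotropic $N$-Laplacian analogue of the Brezis--Merle inequality, and it follows by testing with truncations $T_k(w-s)$ and using the anisotropic isoperimetric inequality (Wulff shapes). Expanding the exponential, $|w|^p\le C_p\delta^{p/(N-1)}e^{c|w|/\delta^{1/(N-1)}}$, so integrating gives \eqref{eq:comparison-Lp}.

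For the gradient estimate \eqref{eq:comparison-gradient}, I would test the difference of the equations with $w$. Then \eqref{eq:a-monotonicity} gives
\[
 c_F\int_\Om|\nabla w|^N\,\d x\le\int_\Om(a_N(\nabla U)-a_N(\nabla h))\cdot\nabla w\,\d x=-\int_\Om gw\,\d x\le\delta\int_\Om\rho|w|\,\d x .
\]
The integral $\int\rho|w|$ is handled by the Young/Fenchel duality inequality $ab\le e^{a}+b\ln b-b$, applied with $a=c|w|/\delta^{1/(N-1)}$ and $b=|\Om|\rho$ against $\d x/|\Om|$. This gives
\[
 \int_\Om\rho\frac{c|w|}{\delta^{1/(N-1)}}\,\d x\le\dashint_\Om e^{c|w|/\delta^{1/(N-1)}}\,\d x+\mathcal H_\Om(g)\le C+\mathcal H_\Om(g).
\]
Hence $\int|\nabla w|^N\le C\delta\cdot\delta^{1/(N-1)}(1+\mathcal H_\Om(g))=C\delta^{N/(N-1)}(1+\mathcal H_\Om(g))$. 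Norm equivalence $F\le C_F|\cdot|$ then gives the first line of \eqref{eq:comparison-gradient}, and the entropy bound gives the second line. All constants are independent of $\Om$ and of the boundary data because the test function is $w$ and the exponential estimate is scale invariant.

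The main obstacle is the exponential estimate with a constant $C_{F,N}|\Om|$ that is uniform over arbitrary bounded Lipschitz domains and arbitrary boundary data $h$. For the nonlinear operator one cannot subtract potentials, so the comparison must be carried out directly on $U$ versus $h$. I would first try the level-set argument. Set $\mu(s)=|\{|w|>s\}|$. Testing with $(w-s)_+\wedge k$ and using monotonicity together with the anisotropic isoperimetric inequality should give $(N\kap_N^{1/N})^{N/(N-1)}\mu(s)^{1}\lesssim -\mu'(s)\,\delta^{1/(N-1)}$, in the spirit of the Talenti/Aguilar--Peral proof. Integrating this differential inequality yields exponential decay of $\mu$, which is what is needed. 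Failing that, I would cite \cite[Lemma~2.3]{CiraoloLi2024} directly, after checking that its hypotheses only involve \eqref{ass:F}.
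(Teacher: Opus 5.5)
Your proposal is correct and follows the paper's proof essentially step for step: the Ciraolo--Li exponential (Brezis--Merle type) estimate for $U-h$ with $\lambda\sim\delta^{-1/(N-1)}$; the bound $s^p\le C_pe^s$ for the $L^p$ estimate; the Fenchel--Young inequality $tb\le t\ln t-t+e^b$ integrated against $\d x/|\Om|$ to produce the entropy term; and testing the difference of the equations with $U-h$ together with strong monotonicity. Your Talenti-type level-set sketch, which applies strong monotonicity in the tested difference of the equations before the isoperimetric step, is a sound way to see why the cited exponential estimate is uniform in $\Om$ and in the boundary data, whereas the paper simply cites \cite[Lemma~2.3]{CiraoloLi2024}.
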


\begin{proof}
Set \(z=U-h\).  Applying the anisotropic Brezis--Merle estimate
of Ciraolo--Li~\cite[Lemma~2.3]{CiraoloLi2024} to \(-g\), let $\mu_{F,N}>0$ denote the constant in that estimate and choose
\[
 \lam =\frac{\mu_{F,N}}2\,\delta^{-1/(N-1)}.
\]
Then
\[
 \int_\Om  e^{\lam |z|}\, \d x\le2|\Om |.
\]
For each finite $p\ge1$, the inequality $s^p\le C_pe^s$ for $s\ge0$ gives
$\int_\Om |z|^p\,\d x\le C_p|\Om |\lam ^{-p}$, proving \eqref{eq:comparison-Lp}.

We next estimate the integral involving $g$. Set $t=|\Om ||g|/\delta$. Jensen's inequality for $t\ln t$ gives $\mathcal H_\Om(g)\ge0$, and $t\le\Lambda|\Om|/\delta$ gives the upper bound. The elementary inequality
\[
 t b\le t\ln  t-t+e^b\quad(t\ge0,\ b\in\R)
\]
with $b=\lam |z|$, integrated against $\d x/|\Om |$, yields
\[
 \frac{\lam }{\delta}\int_\Om  |g||z|\,\d x
 \le\mathcal H_\Om (g)-1+\frac1{|\Om |}\int_\Om  e^{\lam |z|}\,\d x
 \le\mathcal H_\Om (g)+1.
\]
Consequently,
\be \label{eq:gz-bound}
 \int_\Om  |g||z|\,\d x
 \le C_{F,N}\delta^{N/(N-1)}(1+\mathcal H_\Om (g)).
\ee
Finally, subtract the two weak equations in
\eqref{eq:comparison-equations-general} and test with \(z\).  The strong
monotonicity estimate \eqref{eq:a-monotonicity}, norm equivalence, and
\eqref{eq:gz-bound} give
\[
 c_{F,N}\int_\Om  F(\nabla z)^N\, \d x
 \le \Big|\int_\Om  gz\, \d x \Big|
 \le\int_\Om |g||z|\, \d x,
\]
which is \eqref{eq:comparison-gradient}.
\end{proof}

\begin{rem}
The proof combines a distribution estimate for $U-h$ with
strong monotonicity.  Thus its constants remain independent of the
possibly vanishing gradients of the two solutions.  This is the form needed
when $N>2$.
\end{rem}

\subsection{Proof of Theorem~\ref{thm:annular-pohozaev}}

For \(k\in\mathbb Z\), define the Wulff annuli $\mathcal B_k\Subset\mathcal A_k$ by
\[
 \mathcal A_k:=\{2^{k-2}<F^0<2^{k+2}\},
 \quad
 \mathcal B_k:=\{2^{k-1}<F^0<2^{k+1}\},
\]
and set
\[
 \om _k(h):=\operatorname*{osc}_{\mathcal A_k}h
 :=\sup_{\mathcal A_k}h-\inf_{\mathcal A_k}h.
\]
We call $h$ $F$-$N$-harmonic when it is a weak solution of $\diver a_N(\nabla h)=0$. If $h$ is $F$-$N$-harmonic on $W_{r_2}\setminus\overline{W_{r_1}}$, the boundary quantity
\[
 \mathcal Q(h):=
 \int_{\pa  \W_r}a_N(\nabla h)\cdot\nu\, \d\mathcal H^{N-1}
\]
is independent of $r\in(r_1,r_2)$. We compare $\mathcal Q(h)$ with the oscillation of $h$, starting with Caccioppoli's inequality.

\begin{lem}
\label{lem:harmonic-caccioppoli}
If \(h\) is \(F\)-\(N\)-harmonic on \(\mathcal A_k\), then
\be \label{eq:harmonic-caccioppoli}
 \int_{\mathcal B_k}F(\nabla h)^N\, \d x
 \le C_{F,N}\,\om _k(h)^N.
\ee
\end{lem}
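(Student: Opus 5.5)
The estimate will follow from the standard Caccioppoli argument. Its key point is that constants subtracted from $h$ leave both the equation and the oscillation unchanged. Set
\[
 \bar h:=\tfrac12\Big(\sup_{\mathcal A_k}h+\inf_{\mathcal A_k}h\Big),\qquad \tilde h:=h-\bar h .
\]
Then $\tilde h$ is again $F$-$N$-harmonic on $\mathcal A_k$, with $\nabla\tilde h=\nabla h$ and $\sup_{\mathcal A_k}|\tilde h|\le\om_k(h)/2$. If $\om_k(h)=\infty$ there is nothing to prove, so assume it is finite.

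Choose a cutoff $\eta\in C_c^\infty(\mathcal A_k)$ with $0\le\eta\le1$ and $\eta=1$ on $\mathcal B_k$. Using the Wulff inclusions $B_{c_Fr}\subset W_r\subset B_{C_Fr}$, the distance from $\overline{\mathcal B_k}$ to $\pa\mathcal A_k$ is at least $c_F2^{k}$. We may therefore take $|\nabla\eta|\le C_F2^{-k}$, and a concrete choice is $\eta=\chi(F^0/2^k)$ for a fixed profile $\chi$.

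Test the weak equation with $\phi=\eta^N\tilde h$. This is admissible because $h\in W^{1,N}_{loc}$ and is bounded on $\mathcal A_k$; in the application $h$ is $C^1$ by Lemma~\ref{lem:local-C1-compactness}. Since $a_N(\xi)\cdot\xi=F(\xi)^N$, we obtain
\[
 \int\eta^NF(\nabla h)^N\,\d x=-N\int\eta^{N-1}\tilde h\,a_N(\nabla h)\cdot\nabla\eta\,\d x .
\]
By \eqref{eq:a-growth} and norm equivalence, $|a_N(\nabla h)|\le C F(\nabla h)^{N-1}$. Young's inequality in the form $AB\le\tfrac12A^{N/(N-1)}+C B^N$, with $A=\eta^{N-1}F(\nabla h)^{N-1}$ and $B=|\tilde h||\nabla\eta|$, lets us absorb half of the left-hand side. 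This gives
\[
 \int_{\mathcal B_k}F(\nabla h)^N\,\d x\le\int\eta^NF(\nabla h)^N\,\d x\le C_{F,N}\int_{\mathcal A_k}|\tilde h|^N|\nabla\eta|^N\,\d x .
\]

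Finally, the right-hand side is at most
\[
 C\,\om_k(h)^N\,2^{-kN}|\mathcal A_k| .
\]
Since $|\mathcal A_k|=\kap_N(2^{(k+2)N}-2^{(k-2)N})\le C_N2^{kN}$, the powers of $2^k$ cancel, which is the scale invariance of the critical exponent. This yields \eqref{eq:harmonic-caccioppoli} with a constant independent of $k$.

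The only delicate point is the absorption step. It requires $\int\eta^NF(\nabla h)^N<\infty$, which holds because $h\in W^{1,N}$ on the support of $\eta$. No other obstacle arises.
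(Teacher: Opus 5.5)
Your proof is correct and is essentially the paper's argument: test with $\eta^N(h-c)$ for a constant $c$ between $\inf_{\mathcal A_k}h$ and $\sup_{\mathcal A_k}h$ (you take the midpoint), absorb via \eqref{eq:a-growth} and Young's inequality, and use $|\mathcal A_k|\asymp 2^{Nk}$ to cancel $|\nabla\eta|^N\le C2^{-kN}$. One minor point: your concrete cutoff $\chi(F^0/2^k)$ is only $C^2$, not $C^\infty$, since $F^0\in C^2(\RN\setminus\{0\})$, but a $C^1$ cutoff is all the argument needs (the paper uses $C^1_c$).
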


\begin{proof}
Choose $\eta\in C_c^1(\mathcal A_k)$ with $\eta=1$ on $\mathcal B_k$ and $|\nabla\eta|\le C2^{-k}$, and choose $c$ between the infimum and supremum of $h$ on $\mathcal A_k$. Testing with $\eta^N(h-c)$, using \eqref{eq:a-growth}, and applying Young's inequality give
\[
 \int_{\mathcal B_k}F(\nabla h)^N\,\d x
 \le C_{F,N}\int_{\mathcal A_k}|h-c|^N|\nabla\eta|^N\,\d x
 \le C_{F,N}\om _k(h)^N,
\]
since $|\mathcal A_k|\asymp2^{Nk}$. This proves \eqref{eq:harmonic-caccioppoli}.
\end{proof}

The homogeneity of $a_N$ gives $\mathcal Q(ch)=c^{N-1}\mathcal Q(h)$ for $c>0$. We shall also use the following result of Ciraolo and Li~\cite[Proposition~6.8]{CiraoloLi2024}: an $F$-$N$-harmonic function $H$ on $\RN\setminus\{0\}$ satisfying
\[
 |H(x)|\le C(1+|\ln  F^0(x)|)
\]
has the form $H=\beta\ln  F^0+c$. Norm equivalence gives their Euclidean logarithmic growth bound, and $F^0(-x)=F^0(x)$ identifies their two dual norms. The growth bound is sufficient, without a sign condition on $H$ or a bounded remainder relative to a specified logarithm.

\begin{lem}\label{lem:annular-growth}
There are an integer \(J_F\ge2\) and \(\var _F>0\), depending only on
\(F\) and \(N\), with the following property.  Suppose that \(h\) is
\(F\)-\(N\)-harmonic on an annulus containing \(\mathcal A_k\) for all
\(|k|\le J_F\).  If \(\om _0(h)>0\) and
\[
 |\mathcal Q(h)|\le\var _F\om _0(h)^{N-1},
\]
then
\[
 \max_{|k|\le J_F}\om _k(h)\ge2\om _0(h).
\]
The same conclusion holds after shifting the indices.
\end{lem}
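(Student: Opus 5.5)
We argue by contradiction and compactness. Suppose no such $J_F,\var_F$ exist. Then for $J=j$ and $\var=1/j$, $j\ge2$, there are $F$-$N$-harmonic functions $h_j$ on annuli containing $\mathcal A_k$ for all $|k|\le j$, with $\om_0(h_j)>0$ and $|\mathcal Q(h_j)|\le j^{-1}\om_0(h_j)^{N-1}$, but $\om_k(h_j)<2\om_0(h_j)$ for all $|k|\le j$. We normalize
\[
 H_j:=\frac{h_j-h_j(x_*)}{\om_0(h_j)}
\]
for a fixed point $x_*$ with $F^0(x_*)=1$. Since $a_N$ is $(N-1)$-homogeneous, $H_j$ is still $F$-$N$-harmonic. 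It also satisfies $\om_0(H_j)=1$, $|\mathcal Q(H_j)|\le 1/j$, and $\om_k(H_j)\le2$ for $|k|\le j$. Consecutive annuli $\mathcal A_k$ and $\mathcal A_{k+1}$ overlap, so the oscillation bounds chain together. On $\{2^{k-2}<F^0<2^{k+2}\}$, $|k|\le j$, this gives $|H_j|\le 2(|k|+1)$. Hence $|H_j(x)|\le C(1+|\ln F^0(x)|)$ on $\{2^{-j-2}<F^0<2^{j+2}\}$, with $C$ independent of $j$.

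The next step is compactness. On each fixed compact subset of $\RN\setminus\{0\}$ the $H_j$ are eventually defined and uniformly bounded. Lemma~\ref{lem:local-C1-compactness}, with $g=0$, gives uniform $C^{1,\alpha}$ bounds there. A diagonal argument then yields a subsequence with $H_j\to H$ in $C^1_{loc}(\RN\setminus\{0\})$. The limit $H$ is $F$-$N$-harmonic on $\RN\setminus\{0\}$, since the weak formulation passes to the limit under $C^1_{loc}$ convergence. It satisfies the growth bound $|H|\le C(1+|\ln F^0|)$.

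Now we identify the limit. By the Ciraolo--Li rigidity result quoted before the lemma, $H=\beta\ln F^0+c$. The flux $\mathcal Q$ is computed on a fixed Wulff sphere $\pa W_1$ where $C^1$ convergence holds, so $\mathcal Q(H)=\lim\mathcal Q(H_j)=0$. Homogeneity and \eqref{eq:log-boundary} give $\mathcal Q(\beta\ln F^0)=|\beta|^{N-2}\beta\,N\kap_N$, so $\beta=0$ and $H$ is constant.

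The main obstacle is that we must keep $\om_0(H)=1$ in the limit, yet $\mathcal A_0$ is open and uniform convergence on it is not immediate. We handle this by uniform convergence on the compact closure $\overline{\mathcal A_0}$, which lies inside the region where the $H_j$ are harmonic with uniform bounds (for instance inside $\mathcal A_{-1}\cup\mathcal A_0\cup\mathcal A_1$). On $\overline{\mathcal A_0}$ the oscillation of $H_j$ equals $1$, because $\sup$ and $\inf$ over an open set agree with those over its closure for continuous functions. Hence $\operatorname{osc}_{\overline{\mathcal A_0}}H=1$, contradicting the constancy of $H$. This yields $J_F$ and $\var_F$ depending only on $F$ and $N$.

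For the shifted statement we use translation invariance of the problem in the index $k$. If $h$ is harmonic near $\mathcal A_{k_0+k}$ for $|k|\le J_F$, consider $\tilde h(x)=h(2^{k_0}x)$. It is $F$-$N$-harmonic, $\om_k(\tilde h)=\om_{k+k_0}(h)$, and $\mathcal Q(\tilde h)=\mathcal Q(h)$ by the critical scaling $(2^{k_0})^{N-1}\cdot(2^{k_0})^{1-N}=1$. The shifted conclusion therefore follows from the unshifted one.
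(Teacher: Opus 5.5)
Your proposal is correct and follows the paper's proof essentially step by step: contradiction with normalized $F$-$N$-harmonic functions, chaining the oscillation bounds to obtain logarithmic growth, $C^1_{loc}$ compactness via Lemma~\ref{lem:local-C1-compactness}, identification of the limit as $\beta\ln F^0+c$ by the Ciraolo--Li rigidity result, and $\beta=0$ from the vanishing flux, which contradicts $\om_0=1$. Two points the paper leaves implicit are spelled out in your version: you keep the central oscillation in the limit through uniform convergence on $\overline{\mathcal A_0}$, and you derive the shifted statement from the unshifted one by the scaling $\tilde h(x)=h(2^{k_0}x)$.
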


\begin{proof}
If the statement were false, then for every \(j\) one could find an
\(F\)-\(N\)-harmonic function \(h_j\) on an annulus covering the dyadic
indices \(|k|\le j\) such that, after subtracting a constant and dividing by
its central oscillation,
\[
 \om _0(h_j)=1,
 \quad
 |\mathcal Q(h_j)|\le j^{-1},
 \quad
 \max_{|k|\le j}\om _k(h_j)\le2.
\]
Fix \(y_*\in\pa  \W_1\) and normalize \(h_j(y_*)=0\).  The intersections $\mathcal A_k\cap\mathcal A_{k+1}\ne\varnothing$ and the oscillation bounds give
\[
 |h_j(x)|\le2(1+|k|),\quad x\in\mathcal A_k,\quad |k|\le j.
\]
Lemma~\ref{lem:local-C1-compactness}, applied with zero right-hand side on a covering by fixed interior balls, and a diagonal argument give, after
passing to a subsequence,
\[
 h_j\to  h_\infty
 \quad\text{ in }\, C^1_{loc}(\RN\setminus\{0\}).
\]
The limit is $F$-$N$-harmonic, with $\om_0(h_\infty)=1$ and $\mathcal Q(h_\infty)=0$, and obeys
\[
 |h_\infty(x)|\le C(1+|\ln  F^0(x)|).
\]
Ciraolo--Li~\cite[Proposition~6.8]{CiraoloLi2024} and $F^0(-x)=F^0(x)$ give
\[
 h_\infty=\beta\ln  F^0+c
\]
for some \(\beta,c\in\R\).  By \eqref{eq:log-boundary}, its boundary integral is
\[
 \mathcal Q(h_\infty)
 =N\kap _N|\beta|^{N-2}\beta.
\]
The zero boundary integral gives \(\beta=0\), contradicting
\(\om _0(h_\infty)=1\).
\end{proof}

We next compare the boundary integrals of the solution and its $F$-$N$-harmonic comparison function. Estimate~\eqref{eq:a-difference} gives the comparison directly, without a linearized equation.

\begin{lem}\label{lem:boundary-stability}
Let $D$ be an annulus containing $\overline{\mathcal A_k}$ and let
$U,h\in W^{1,N}_{loc}(D)$, with $h$ a weak solution of $\diver a_N(\nabla h)=0$ in $D$.
Set $z=U-h$ and $I_k=[2^{k-1/2},2^{k+1/2}]$. For any prescribed measurable set $E_k\subset I_k$ with $|I_k\setminus E_k|=0$, there exists $r_k\in E_k$ such that
\begin{align}
 & \,\Big|
 \mathcal Q(h)-
 \int_{\pa  \W_{r_k}}a_N(\nabla U)\cdot\nu\,\d\mathcal H^{N-1}
  \Big|\notag\\
 \le&\,
 C_{F,N}\om _k(h)^{N-2}
 \Big(\int_{\mathcal B_k}|\nabla z|^N\, \d x\Big)^{1/N}
 +C_{F,N}
 \Big(\int_{\mathcal B_k}|\nabla z|^N\, \d x\Big)^{(N-1)/N}.
 \label{eq:boundary-stability}
\end{align}
\end{lem}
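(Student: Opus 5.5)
The plan is to write the flux difference pointwise via \eqref{eq:a-difference} and then choose a good radius by averaging over $r\in I_k$ with the coarea formula. The Wulff spheres $\pa\W_r$ with $r\in I_k$ lie in $\overline{\mathcal B_k'}$, where $\mathcal B_k'=\{2^{k-1/2}\le F^0\le 2^{k+1/2}\}\Subset\mathcal B_k$. Since $\mathcal Q(h)$ is independent of $r$, for every $r\in I_k$ we have
\[
 \Big|\mathcal Q(h)-\int_{\pa\W_r}a_N(\nabla U)\cdot\nu\,\d\mathcal H^{N-1}\Big|
 \le C\int_{\pa\W_r}(|\nabla h|+|\nabla z|)^{N-2}|\nabla z|\,\d\mathcal H^{N-1}.
\]
For $N=2$ the weight is just $1$. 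For $N>2$ we split $(|\nabla h|+|\nabla z|)^{N-2}\le C(|\nabla h|^{N-2}+|\nabla z|^{N-2})$.

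Next, average over $r\in I_k$. Coarea with $|\nabla F^0|\asymp1$ gives
\[
 \frac1{|I_k|}\int_{I_k}\int_{\pa\W_r}g\,\d\mathcal H^{N-1}\,\d r\le C2^{-k}\int_{\mathcal B_k}g\,\d x
\]
for $g\ge0$. Hence there is a set of $r\in I_k$ of positive measure, and we take $r_k$ in it and in $E_k$ since $|I_k\setminus E_k|=0$, on which the boundary integral is at most (a constant times) $2^{-k}\int_{\mathcal B_k}(|\nabla h|^{N-2}|\nabla z|+|\nabla z|^{N-1})\,\d x$. To be safe, I would choose $r_k$ so that both pieces are simultaneously bounded by a constant times their averages, using Chebyshev on each and taking the intersection of two sets each of measure greater than $|I_k|/2$. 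The $z$-term is handled by H\"older: $2^{-k}\int_{\mathcal B_k}|\nabla z|^{N-1}\le 2^{-k}|\mathcal B_k|^{1/N}\|\nabla z\|_{L^N(\mathcal B_k)}^{N-1}\le C\|\nabla z\|_{L^N}^{N-1}$, because $|\mathcal B_k|\asymp2^{Nk}$. This is the second term of \eqref{eq:boundary-stability}.

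The main obstacle is the mixed term $2^{-k}\int_{\mathcal B_k}|\nabla h|^{N-2}|\nabla z|$. The Caccioppoli estimate \eqref{eq:harmonic-caccioppoli} only gives the integral bound $\int_{\mathcal B_k}|\nabla h|^N\le C\om_k(h)^N$. Using H\"older with exponents $N/(N-2)$, $N$ and $N/1$ on the remaining factor $1$, we get
\[
 2^{-k}\|\nabla h\|_{L^N}^{N-2}\|\nabla z\|_{L^N}|\mathcal B_k|^{1/N}\le C\om_k(h)^{N-2}\|\nabla z\|_{L^N(\mathcal B_k)}.
\]
So the integral form suffices and no pointwise gradient bound is needed. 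The scaling is consistent: the factor $2^{-k}$ cancels with $|\mathcal B_k|^{1/N}$. If one prefers, Caccioppoli on $\mathcal A_k$ can be applied with $\mathcal B_k$ itself, since \eqref{eq:harmonic-caccioppoli} is stated exactly there. Combining the two estimates at the chosen $r_k$ yields \eqref{eq:boundary-stability}. The boundary traces of $\nabla U$ and $\nabla h$ on $\pa\W_r$ are meaningful for almost every $r$ by Fubini/coarea, which is why $E_k$ can be any full-measure subset of $I_k$.
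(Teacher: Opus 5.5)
Your proposal is correct and follows essentially the same route as the paper. Both arguments use the pointwise bound from \eqref{eq:a-difference}, coarea averaging over $I_k$ to select $r_k\in E_k$, and H\"older's inequality combined with the Caccioppoli estimate \eqref{eq:harmonic-caccioppoli}, so that the factor $|\{F^0\in I_k\}|^{1/N}\asymp 2^k$ cancels the averaging factor $2^{-k}$. The only difference is cosmetic: you intersect two Chebyshev sets for the two pieces, whereas the paper averages their sum directly, and both work equally well.
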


\begin{proof}
All surface integrals involving $U$ below are taken at radii where their
coarea representatives are defined.  By \eqref{eq:a-difference},
\[
 |a_N(\nabla U)-a_N(\nabla h)|
 \le C_{F,N}
  (|\nabla h|^{N-2}|\nabla z|+|\nabla z|^{N-1} ).
\]
Let
\[
 I_k=[2^{k-1/2},2^{k+1/2}],
 \quad
 \mathcal S_k=\{x:F^0(x)\in I_k\}\Subset\mathcal B_k.
\]
The coarea formula and the uniform bounds for \(|\nabla F^0|\) give
\begin{align*}
 \int_{I_k}\int_{\pa  \W_r}
 |a_N(\nabla U)-a_N(\nabla h)|\,\d\mathcal H^{N-1}\, \d r\le C_F\int_{\mathcal S_k}
 |a_N(\nabla U)-a_N(\nabla h)|\, \d x.
\end{align*}
Coarea defines the surface integrals for almost every $r\in I_k$. Since $E_k$ has full measure and $|I_k|\asymp2^k$, averaging gives $r_k\in E_k$ where the boundary integral is at most $C2^{-k}$ times the volume integral. For a positive average use twice its value; for a zero average the integral vanishes almost everywhere.

Write
\[
 Z_k:=\int_{\mathcal B_k}|\nabla z|^N\, \d x.
\]
Using H\"older's inequality with a third factor equal to one,
Lemma~\ref{lem:harmonic-caccioppoli}, and
\(|\mathcal S_k|^{1/N}\asymp2^k\), we obtain
\begin{align*}
 \int_{\mathcal S_k}|\nabla h|^{N-2}|\nabla z|\, \d x
 \le
 \Big(\int_{\mathcal B_k}|\nabla h|^N\,\d x\Big)^{(N-2)/N}
 Z_k^{1/N}|\mathcal S_k|^{1/N}\le C_{F,N}\om _k(h)^{N-2}Z_k^{1/N}2^k,
\end{align*}
and similarly
\[
 \int_{\mathcal S_k}|\nabla z|^{N-1}\, \d x
 \le C_{F,N}Z_k^{(N-1)/N}2^k.
\]
For $N=2$, the first term in \eqref{eq:boundary-stability} means $CZ_k^{1/2}$,
including when $\om _k(h)=0$.  The factor \(2^k\) cancels the averaging
factor \(2^{-k}\).  Since the boundary integral
of \(h\) is independent of the radius, this proves
\eqref{eq:boundary-stability}.
\end{proof}

\begin{proof}[Proof of Theorem~\ref{thm:annular-pohozaev}]
Recall
\[
 v=b_u-u\ge1,
 \quad
 \diver a_N(\nabla v)=f,
 \quad
 f=Ve^{Nu}\in L^1(\RN).
\]
By Theorem~\ref{thm:threshold}, \(M>0\) and
\[
 \gamma_0=\Big(\frac{M}{N\kap _N}\Big)^{1/(N-1)}>0.
\]
We first obtain compactness for the functions $h_j$, then transfer it to the
solution.  All constants below are independent of $j$ and of the dyadic index in the indicated range. Unless specified otherwise, limits involving $j$ are taken as $j\to\infty$.

\textbf{Step 1:} uniform multiplicative estimates and small-mass scales.

Fix the Wulff annulus
\[
 \mathcal A_*:=\{2^{-3}<F^0<2^3\}
\]
and write
\[
 c_*(S):=\exp\Big(\dashint_{\mathcal A_*} \ln  v(Sy)\, \d y\Big).
\]
The proof of Proposition~\ref{prop:multiplicative-convergence}, together with the
fact that the exterior part of the integral in \eqref{eq:automatic-log-energy} tends to zero, gives for every finite
\(p\)
\be \label{eq:uniform-convergence-N}
 \sup_{S\ge T}
 \Big\|\frac{v(S\,\cdot)}{c_*(S)}-1 \Big\|_{L^p(\mathcal A_*)}
 \to 0
 \quad\text{ as }\, T\to\infty.
\ee
Indeed, the rescaled critical energy is bounded by the exterior integral
\(\int_{\{F^0>cT\}}F(\nabla\ln v)^N\,\d x\), which tends to zero uniformly in
\(S\ge T\).  Poincar\'e and the critical Moser--Trudinger inequality then give
\eqref{eq:uniform-convergence-N} exactly as before.

Estimate~\eqref{eq:annular-scale-bounds} and Remark~\ref{rem:annular-normalizations} give
\be \label{eq:uniform-c-upper-N}
 c_*(S)\le C\ln  S
 \quad\hbox{for all sufficiently large }S.
\ee

Choose \(A>0\) so large that
\be \label{eq:A-choice-N}
 \frac{A\ln 2}{2J_F}>2,
\ee
where \(J_F\) is from Lemma~\ref{lem:annular-growth}.  Apply
Lemma~\ref{lem:small-mass-annuli} to the sequence in
\eqref{eq:dyadic-f-mass}.  With the resulting \(n_j,L_j,\delta_j\), set
\[
 R_j:=2^{n_j},
 \quad
 \Om _j:=\{2^{-2L_j}<F^0<2^{2L_j}\},
\]
\[
 v_j(y):=v(R_jy),
 \quad
 f_j(y):=R_j^Nf(R_jy).
\]
Then
\be \label{eq:Fj-L1-N}
 d_j:=\|f_j\|_{L^1(\Om _j)}\le\delta_j.
\ee
Moreover, the independent polynomial upper bound \eqref{eq:polynomial-upper}
and \(u\le\sup u\) give
\[
 \ln  (e+\|f_j\|_{L^\infty(\Om _j)} )
 \le C(n_j+L_j),
\]
and
\be \label{eq:Omega-volume-N}
 |\Om _j|\le C_F2^{2NL_j}.
\ee

\textbf{Step 2:} $F$-$N$-harmonic functions and uniform bounds.

Let \(h_j\) solve the \(F\)-\(N\)-harmonic Dirichlet problem on
\(\Om _j\) with boundary data \(v_j\):
\[
 \begin{cases}
  \diver a_N(\nabla h_j)=0&\text{ in }\, \Om _j,\\
  h_j=v_j&\text{ on }\, \pa \Om _j.
 \end{cases}
\]
Minimization of $\int_{\Om _j}\Phi_N(\nabla h)\,\d y$ on $v_j+W^{1,N}_0(\Om _j)$ gives a unique $h_j$ by coercivity and strict convexity. The boundary values are understood in the trace sense. Since \(v_j\ge1\) on the boundary, comparison with the constant solution
implies \(h_j\ge1\) in \(\Om _j\).  Set \(z_j=v_j-h_j\).  If \(d_j=0\),
uniqueness for the homogeneous Dirichlet problem gives \(z_j\equiv0\).  For
\(d_j>0\), Lemma~\ref{lem:nonlinear-comparison} and
\eqref{eq:Fj-L1-N}--\eqref{eq:Omega-volume-N} give
\be \label{eq:zj-energy-N}
 \mathcal Z_j:=\int_{\Om _j}F(\nabla z_j)^N\, \d y
 \le Cd_j^{N/(N-1)}
  (1+n_j+L_j+|\ln  d_j| )
 \to 0.
\ee
We use the convention \(\mathcal Z_j=0\) when \(d_j=0\).  The convergence
follows from \(n_jd_j\le n_j\delta_j\to0\), \(L_j=o(n_j)\), and
\(d_j^{N/(N-1)}|\ln  d_j|\to0\).

We next estimate $h_j$ using $h_j=v_j-z_j$.
Choose once and for all a finite exponent \(p\) so large that
\[
 \frac{3NA\ln 2}{p}<\frac1{N-1}.
\]
For large $j$, all annuli $\{2^{k-3}<F^0<2^{k+3}\}$ with $|k|\le L_j$ are contained in $\Om _j$, since $L_j\to\infty$. For these indices, scaling
\eqref{eq:uniform-convergence-N} to the physical radius \(R_j2^k\), together
with \eqref{eq:uniform-c-upper-N} and \(L_j=o(n_j)\), gives
\[
 \Big(\dashint_{\{2^{k-3}<F^0<2^{k+3}\}} |v_j|^p\, \d y\Big)^{1/p}
 \le Cn_j.
\]
On the other hand, \eqref{eq:comparison-Lp} and
\eqref{eq:Omega-volume-N} imply, uniformly for \(|k|\le L_j\),
\begin{align*}
 \Big(\dashint_{\{2^{k-3}<F^0<2^{k+3}\}} |z_j|^p\, \d y\Big)^{1/p}
 \le C_p d_j^{1/(N-1)}2^{3NL_j/p}\to 0.
\end{align*}
The volume ratio is largest when $k=-L_j$. In fact,
if \(\theta:=3NA\ln 2/p<1/(N-1)\), then
\[
 d_j^{1/(N-1)}2^{3NL_j/p}
 \le C(n_jd_j)^{1/(N-1)}
 n_j^{\theta-1/(N-1)}\to 0.
\]
Thus the $L^p$ averages of $h_j$ are $O(n_j)$. Rescale
\(\{2^{k-3}<F^0<2^{k+3}\}\) to the fixed annulus
\(\mathcal A_*\).  The positive function \(h_j\) is \(F\)-\(N\)-harmonic there.  Serrin's local boundedness estimate~\cite[Theorem~2]{Serrin1964}, applied on finitely many interior balls of the fixed annulus, gives
\[
 1\le h_j\le Cn_j,
 \quad\hbox{ on }\,\mathcal A_k,
 \quad
 \om _k(h_j)\le Cn_j
 \quad \forall\, k\in\mathbb Z\ \text{with }|k|\le L_j-4.
\]

\textbf{Step 3:} uniform oscillation estimates and identification of the boundary integral.

Let
\[
 \mathcal Q_j:=\mathcal Q(h_j).
\]
For \(|k|\le L_j-4\), Lemma~\ref{lem:boundary-stability}, norm equivalence, and
\eqref{eq:zj-energy-N} give a radius
\(r_{j,k}\asymp2^k\) such that
\[
 \Big|
 \mathcal Q_j-
 \int_{\pa  \W_{r_{j,k}}}a_N(\nabla v_j)\cdot\nu\,
 \d\mathcal H^{N-1}
  \Big|
 \le C\om _k(h_j)^{N-2}\mathcal Z_j^{1/N}
 +C\mathcal Z_j^{(N-1)/N}.
\]
The radius can also be chosen in the full-measure set on which the weak divergence identity for the global solution holds. Thus
\[
 \int_{\pa  \W_{r_{j,k}}}a_N(\nabla v_j)\cdot\nu\,\d\mathcal H^{N-1}
 =\int_{\W_{r_{j,k}}}f_j\, \d y
 =\int_{\W_{R_jr_{j,k}}}f\, \d x.
\]
Set
\[
 \tau_j:=
 \int_{\{F^0>cR_j2^{-L_j}\}}|f|\, \d x,
\]
where \(c>0\) is fixed and small.  Since
\(R_j2^{-L_j}=2^{n_j-o(n_j)}\to\infty\), one has \(\tau_j\to0\), and the
integrals of $f$ above differ from \(M\) by at most \(\tau_j\), uniformly
for \(|k|\le L_j-4\).  Consequently
\be \label{eq:boundary-master-N}
 |\mathcal Q_j-M|
 \le \tau_j
 +C\om _k(h_j)^{N-2}\mathcal Z_j^{1/N}
 +C\mathcal Z_j^{(N-1)/N},
 \quad \forall\, k\in\mathbb Z\ \text{with }|k|\le L_j-4.
\ee

We now prove the scale-independent bound
\be \label{eq:omega-uniform-N}
 \sup_{\substack{k\in\mathbb Z\\|k|\le L_j/2}}\om_k(h_j)\le K
\ee
for a fixed constant $K$. For large $j$ we have $\tau_j\le1$ and $\mathcal Z_j\le1$. Choose $K\ge1$, independently of $j$, so large that
\[
 \frac{M+1}{K^{N-1}}+\frac C K+\frac C{K^{N-1}}<\var _F,
\]
with the constants from \eqref{eq:boundary-master-N}. Suppose an index $k_j$ with $|k_j|\le L_j/2$ satisfies $\om _{k_j}(h_j)\ge K$. Set
\[
 M_m:=\max_{|\ell-k_j|\le mJ_F}\om _\ell(h_j),\quad
 m_j:=\Big\lfloor\frac{L_j/2-8}{J_F}\Big\rfloor.
\]
For $j$ large, $m_j\ge1$. Whenever $0\le m<m_j$, choose a maximizing index $\ell_m$. All indices up to the next step have absolute value at most $L_j-8$, so both the boundary estimate and the uniform $Cn_j$ bound remain available. Dividing \eqref{eq:boundary-master-N} at $\ell_m$ by $M_m^{N-1}$ gives
\be \label{eq:relative-boundary-N}
 \frac{|\mathcal Q_j|}{M_m^{N-1}}
 \le\frac{M+\tau_j}{M_m^{N-1}}
   +C\frac{\mathcal Z_j^{1/N}}{M_m}
   +C\frac{\mathcal Z_j^{(N-1)/N}}{M_m^{N-1}}
 <\var _F.
\ee
Here $M_m\ge K$. Lemma~\ref{lem:annular-growth}, centered at $\ell_m$, yields $M_{m+1}\ge2M_m$. Iteration therefore gives
\[
 Cn_j\ge M_{m_j}\ge 2^{m_j}K.
\]
But $m_j\ge L_j/(2J_F)-C_F$ and $L_j=\lfloor A\ln (n_j+2)\rfloor$, so
\[
 2^{m_j}\ge c_F(n_j+2)^{A\ln 2/(2J_F)}.
\]
This contradicts \eqref{eq:A-choice-N}. Thus \eqref{eq:omega-uniform-N} holds. The division in \eqref{eq:relative-boundary-N} precedes the limit: for $N>2$, the preliminary $O(n_j)$ oscillation bound together with $\mathcal Z_j\to0$ would not by itself make $\om _k(h_j)^{N-2}\mathcal Z_j^{1/N}$ tend to zero.

Now take \(k=0\) in \eqref{eq:boundary-master-N}.  The uniform oscillation bound
and \(\mathcal Z_j,\tau_j\to0\) give
\be \label{eq:Qj-to-M-N}
 \mathcal Q_j\to  M>0.
\ee

\textbf{Step 4:} limiting logarithmic profile and vanishing annular gradient error.

Fix \(y_*\in\pa  \W_1\).  The intersections $\mathcal A_k\cap\mathcal A_{k+1}\ne\varnothing$ and
\eqref{eq:omega-uniform-N} give
\[
 |h_j(y)-h_j(y_*)|
 \le C(1+|k|),
 \quad \forall\, k\in\mathbb Z\ \text{with }|k|\le L_j/2-2,\ \forall\, y\in\mathcal A_k.
\]
Lemma~\ref{lem:local-C1-compactness} and a diagonal argument therefore give,
after passing to a subsequence,
\[
 h_j-h_j(y_*)\to  h_\infty
 \quad\text{ in }\, C^1_{loc}(\RN\setminus\{0\}).
\]
The limit is \(F\)-\(N\)-harmonic and satisfies
\[
 |h_\infty(y)|\le C(1+|\ln  F^0(y)|).
\]
By Ciraolo--Li's rigidity theorem on $\RN\setminus\{0\}$
\cite[Proposition~6.8]{CiraoloLi2024} and $F^0(-x)=F^0(x)$,
\[
 h_\infty(y)=\beta\ln  F^0(y)+c
\]
for some \(\beta,c\in\R\).  Our normalization at \(y_*\), where
\(F^0(y_*)=1\), gives \(c=0\).  Moreover, local \(C^1\) convergence on a
fixed Wulff sphere and \eqref{eq:Qj-to-M-N} give
\[
 M=\mathcal Q(h_\infty)
 =N\kap _N|\beta|^{N-2}\beta.
\]
Since \(M>0\), \(\beta>0\), and by the definition of \(\gamma_0\),
\(\beta=\gamma_0\).  Every convergent subsequence has the same limit, hence
\be \label{eq:hj-log-convergence-N}
 h_j-h_j(y_*)\to \gamma_0\ln  F^0
 \quad\text{ in }\, C^1_{loc}(\RN\setminus\{0\}).
\ee

Let $0<a<b<\infty$ be fixed.  By \eqref{eq:zj-energy-N} and
\eqref{eq:hj-log-convergence-N},
\be \label{eq:all-annular-gradient-limit}
 \int_{\W_b\setminus\W_a}
 F (\nabla v_j-\gamma_0\nabla\ln  F^0 )^N\, \d y
 \to 0.
\ee
Convergence~\eqref{eq:hj-log-convergence-N} holds on every compact subset of $\RN\setminus\{0\}$, so one sequence suffices for all $0<a<b<\infty$. The constants $h_j(y_*)$ need not converge. The evenness of $F$ and critical scale invariance give
\be \label{eq:all-annular-gradient-error-limit}
 \int_{\W_{bR_j}\setminus\W_{aR_j}}
 F (\nabla u+\gamma_0\nabla\ln  F^0 )^N\, \d x
 \to 0.
\ee
Since $a$ and $b$ are arbitrary, \eqref{eq:all-annular-gradient-error-limit} proves
the first limit in \eqref{eq:auto-annular-gradient-error}; the energy limit is computed below. Taking $(a,b)=(1,2)$ and applying the coarea selection in Proposition~\ref{prop:annular-gradient-error-PB} gives radii $r_j\in[R_j,2R_j]$ satisfying both limits in \eqref{eq:PB-boundary}. Passing to a subsequence, we may assume that $r_j$ is strictly increasing.

Take $\mathcal A$ and $\alpha_j$ as in the theorem, and set
$U_j(y)=u(R_jy)+\gamma_0\ln F^0(y)-\alpha_j$.
For any fixed Wulff annulus $D\Subset\RN\setminus\{0\}$ containing
$\overline{\mathcal A}$, \eqref{eq:all-annular-gradient-error-limit} gives
$\|\nabla U_j\|_{L^N(D)}\to0$. The normalization
$\dashint_{\mathcal A}U_j\,\d y=0$ and Poincar\'e's inequality give
$\|U_j\|_{W^{1,N}(D)}\to0$. Every compact subset of the punctured space
is contained in such an annulus, proving the stated strong convergence.
The Sobolev embedding also gives convergence in every finite local
$L^p$ space.

Finally, $|F(\xi)-F(\eta)|\le F(\xi-\eta)\le C_F|\xi-\eta|$,
so~\eqref{eq:all-annular-gradient-limit} gives convergence of the corresponding
$L^N$ gradient norms.  The Wulff polar formula and
$F(\nabla\ln  F^0)=1/F^0$ yield
\begin{align*}
 \lim_{j\to\infty}
 \int_{\W_{bR_j}\setminus\W_{aR_j}}F(\nabla u)^N\, \d x
=\gamma_0^N\int_{\W_b\setminus\W_a}\frac{\d y}{F^0(y)^N}=N\kap _N\gamma_0^N\ln \frac ba.
\end{align*}
This proves all the compactness and energy conclusions stated in the theorem.
\end{proof}

\begin{prop}\label{prop:larger-upper-growth}
In Theorem~\ref{thm:annular-pohozaev}, the polynomial bound \eqref{eq:polynomial-upper} may be replaced by \eqref{eq:larger-upper-growth}.
\end{prop}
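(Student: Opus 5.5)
The plan is to rerun the proof of Theorem~\ref{thm:annular-pohozaev} and check that \eqref{eq:polynomial-upper} enters in exactly one place. That place is Step~1, in the bound $\ln(e+\|f_j\|_{L^\infty(\Om_j)})\le C(n_j+L_j)$. Its only later use is through the logarithmic factor in \eqref{eq:comparison-gradient}, which gives \eqref{eq:zj-energy-N}. Everything else is independent of the upper growth of $V$: Theorem~\ref{thm:threshold}, the uniform multiplicative convergence \eqref{eq:uniform-convergence-N}, the bound \eqref{eq:uniform-c-upper-N}, Serrin's bound $h_j\le Cn_j$, the oscillation iteration leading to \eqref{eq:omega-uniform-N}, and the Ciraolo--Li rigidity step. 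So it suffices to recover $\mathcal Z_j\to0$ under \eqref{eq:larger-upper-growth}.

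Under \eqref{eq:larger-upper-growth} and $u\le\sup u$, on $\Om_j$ (physical radii at most $C2^{n_j+2L_j}$) we get
\[
 \ln\big(e+\|f_j\|_{L^\infty(\Om_j)}\big)\le C n_j+C\big[\ln(e+C2^{n_j+2L_j})\big]^{N/(N-1)}\le C n_j^{N/(N-1)},
\]
using $L_j=o(n_j)$. Then \eqref{eq:comparison-gradient} together with \eqref{eq:Omega-volume-N} gives
\[
 \mathcal Z_j\le C d_j^{N/(N-1)}\big(1+n_j^{N/(N-1)}+|\ln d_j|\big).
\]
The middle term equals $C(n_jd_j)^{N/(N-1)}$, which tends to $0$ because $n_j\delta_j\to0$ by Lemma~\ref{lem:small-mass-annuli}. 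The remaining terms tend to $0$ as before.

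The exponent $N/(N-1)$ is exactly critical here: the small-mass selection $n_j\delta_j\to0$ produces precisely this power. A larger exponent would require a stronger selection, which Lemma~\ref{lem:small-mass-annuli} cannot provide for a general $\ell^1$ sequence. This is consistent with the optimality claim in Remark~\ref{rem:upper-growth-planar}.

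I expect the main obstacle to be checking that no other step silently used polynomial growth. Two places need care.

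\emph{Step~2, the $L^p$ smallness of $z_j$.} It uses only \eqref{eq:comparison-Lp}, which involves $d_j$ and $|\Om_j|$ but not $\Lambda$, so it is unaffected.

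\emph{Step~3, the boundary identity and the tail $\tau_j$.} These use only $f\in L^1$.

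Hence \eqref{eq:zj-energy-N} holds, and Steps~3--4 go through verbatim. The conclusions are the compactness statement, \eqref{eq:auto-annular-gradient-error}, and, via Proposition~\ref{prop:annular-gradient-error-PB}, \eqref{eq:PB-boundary}. The choice of $A$ in \eqref{eq:A-choice-N} and of $p$ is unchanged.
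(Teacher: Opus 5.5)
Your proposal is correct and follows the paper's proof essentially verbatim. Like the paper, you isolate the single use of \eqref{eq:polynomial-upper} in the logarithmic factor of \eqref{eq:comparison-gradient}, bound $\ln(e+\|f_j\|_{L^\infty(\Om_j)})$ by $C n_j^{N/(N-1)}$ (the paper writes $C(1+(n_j+L_j)^{N/(N-1)})$, and you absorb the $L_j$ and $\ln|\Om_j|$ terms via $L_j=o(n_j)$), and conclude $\mathcal Z_j\to0$ from $(n_jd_j)^{N/(N-1)}\to0$ and $d_j^{N/(N-1)}|\ln d_j|\to0$. Your remark that the exponent is critical for Lemma~\ref{lem:small-mass-annuli} is only a heuristic aside and is not needed for the proof.
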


\begin{proof}
Use the same $n_j,L_j,\delta_j,\Om _j$ and $d_j$ as above, and set $q_N=N/(N-1)$. The upper bound on $u$ and \eqref{eq:larger-upper-growth} give
\[
 \ln (e+\|f_j\|_{L^\infty(\Om _j)})
 \le C(1+(n_j+L_j)^{q_N}).
\]
Indeed, $f_j(y)=2^{Nn_j}V(2^{n_j}y)e^{Nu(2^{n_j}y)}$ and $|y|\le C_F2^{2L_j}$ in $\Om _j$. The gradient-difference term is therefore bounded by
\[
 C d_j^{q_N}(1+(n_j+L_j)^{q_N}+L_j+|\ln  d_j|)\to0,
\]
with the case $f_j\equiv0$ treated as before. The convergence follows from $L_j=o(n_j)$, \eqref{eq:Fj-L1-N}, and \eqref{eq:small-mass-annuli-rate}: the leading term is at most $C(n_jd_j)^{q_N}\to0$, and $d_j^{q_N}|\ln d_j|\to0$.

The $L^p$ bound for $v_j-h_j$, the choice of $p$, and every subsequent oscillation and boundary estimate are unchanged: none uses the polynomial upper bound except through the logarithm of $\|f_j\|_\infty$. The conclusions of Theorem~\ref{thm:annular-pohozaev} follow.
\end{proof}

\section{Pohozaev identities and rigidity}\label{sec:pohozaev}

We combine \eqref{eq:PB-boundary} with the local Pohozaev identity. The radial and directional identities below correspond to the infinitesimal dilations $x\mapsto(1+t)x$ and translations $x\mapsto x+te$, where $e\in\RN$.

\subsection{Local and global Pohozaev identities}

For $y\in\RN$, write
\[
 \mathcal P_{u,y}:=((x-y)\cdot\nabla u)a_N(\nabla u)
 -(x-y)\Phi_N(\nabla u),
 \quad \mathcal P_u=\mathcal P_{u,0}.
\]
We first prove a local Pohozaev identity for $C^1$ weak solutions. Here $\mathcal D'(\RN)$ denotes the space of distributions. No second derivatives of $u$ are used at critical points.

\begin{lem}\label{lem:local-pohozaev}
Let $u$ be a weak solution of \eqref{maineq} with $u\in C^1_{loc}(\RN)$. Then
\[
 \diver\Big(\mathcal P_u+\frac{x}{N}Ve^{Nu}\Big)
 =\Big(V+\frac{x\cdot\nabla V}{N}\Big)e^{Nu}
 \quad\text{ in }\, \mathcal D'(\RN).
\]
In particular, for every $R>0$,
\be \label{eq:pohozaev-R}
 \int_{\pa \W_R}\Big[\mathcal P_u\cdot\nu
 +\frac{x\cdot\nu}{N}Ve^{Nu}\Big]\,\d\mathcal H^{N-1}
 =\int_{\W_R}\Big(V+\frac{x\cdot\nabla V}{N}\Big)e^{Nu}\, \d x.
\ee
The same statements hold with $x$ replaced by $x-y$ in the Pohozaev vector field, the boundary term involving $Ve^{Nu}$, and the derivative term.
\end{lem}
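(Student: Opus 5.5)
The plan is to prove the distributional identity by a Rellich--Pohozaev computation that uses only first derivatives of $u$, made rigorous by a regularization of the vector field. The formal computation is the starting point. For smooth $u$ with $\nabla u\ne0$,
\[
\diver\big((x\cdot\nabla u)a_N(\nabla u)\big)=(x\cdot\nabla u)\diver a_N(\nabla u)+\nabla(x\cdot\nabla u)\cdot a_N(\nabla u),
\]
and $\nabla(x\cdot\nabla u)=\nabla u+D^2u\,x$. Likewise
\[
\diver(x\Phi_N(\nabla u))=N\Phi_N(\nabla u)+a_N(\nabla u)\cdot D^2u\,x .
\]
The Hessian terms cancel, and $a_N(\nabla u)\cdot\nabla u=N\Phi_N(\nabla u)$ kills the remaining first-order terms. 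Hence
\[
\diver\mathcal P_u=(x\cdot\nabla u)\diver a_N(\nabla u)=-(x\cdot\nabla u)Ve^{Nu}.
\]
Adding
\[
\diver\Big(\tfrac{x}{N}Ve^{Nu}\Big)=Ve^{Nu}+\tfrac{x\cdot\nabla V}{N}e^{Nu}+(x\cdot\nabla u)Ve^{Nu}
\]
gives the claim.

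To avoid second derivatives of $u$, I would work in a form dual to the Hessian cancellation. Fix $\phi\in C_c^\infty(\RN)$ and consider the domain variation $u_t(x)=u(x+t\phi(x)x)$, or more simply use the test function $\psi=\phi\,(x\cdot\nabla u)$ after mollification. The cleanest route is to mollify $u$: set $u_\varepsilon=u*\rho_\varepsilon$, so $u_\varepsilon\to u$ in $C^1_{loc}$. The problem is that $u_\varepsilon$ does not solve the equation, so I would instead take the alternative route of inner variations. Since $u\in C^1_{loc}$ and $a_N$, $\Phi_N$ are continuous, all integrands are continuous. I would test the weak equation with
\[
\psi_\varepsilon=\phi\,(x\cdot\nabla u_\varepsilon)\in C^1_c .
\]
This is admissible by density, because the weak formulation extends to $W^{1,N}$ test functions with compact support. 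I would then expand
\[
\nabla\psi_\varepsilon=(x\cdot\nabla u_\varepsilon)\nabla\phi+\phi\,(\nabla u_\varepsilon+D^2u_\varepsilon x).
\]
The problematic term is
\[
\int\phi\,a_N(\nabla u)\cdot D^2u_\varepsilon\,x .
\]
I would handle it by writing $a_N(\nabla u)=a_N(\nabla u_\varepsilon)+\big(a_N(\nabla u)-a_N(\nabla u_\varepsilon)\big)$. The first part equals
\[
\int\phi\,x\cdot\nabla\big(\Phi_N(\nabla u_\varepsilon)\big),
\]
because $\Phi_N$ is $C^1$ with $D\Phi_N=a_N$, including at $\xi=0$, where $\Phi_N$ is $C^1$ since $N\ge2$. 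Integrating by parts then moves the derivative onto $\phi x$, leaving no second derivatives.

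The main obstacle is the error term
\[
\int\phi\,\big(a_N(\nabla u)-a_N(\nabla u_\varepsilon)\big)\cdot D^2u_\varepsilon\,x,
\]
since $D^2u_\varepsilon$ may blow up like $\varepsilon^{-1}\,\mathrm{osc}_\varepsilon\nabla u$. The first thing I would try is to use the local $C^{1,\alpha}$ regularity of Lemma~\ref{lem:local-C1-compactness}, which applies because $u$ and $Ve^{Nu}$ are locally bounded. It gives
\[
|\nabla u-\nabla u_\varepsilon|\le C\varepsilon^\alpha
\qquad\text{and}\qquad
|D^2u_\varepsilon|\le C\varepsilon^{\alpha-1}.
\]
With only the crude bound $|a_N(\nabla u)-a_N(\nabla u_\varepsilon)|\le C\varepsilon^{\alpha}$ (using \eqref{eq:a-difference} with bounded gradients), the product is $O(\varepsilon^{2\alpha-1})$. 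This vanishes only if $\alpha>1/2$, which is not guaranteed. If that fails, I would switch to the Noether/domain-variation argument. The weak form is stationary only for outer variations, so instead I would follow the standard approach for $C^1$ weak solutions of $p$-Laplace type equations, as in Degiovanni--Musesti--Squassina: mollify the vector field, not $u$. I would use the $W^{2,2}_{loc}$ regularity of $a_N(\nabla u)$ when $N>2$, or of $u$ when $N=2$ (the difference-quotient argument of Lemma~\ref{lem:local-C1-compactness} extends to give
\[
a_N(\nabla u)\in W^{1,2}_{loc}
\]
for $N$-Laplace type operators). This justifies the chain rule
\[
\nabla\big(\Phi_N(\nabla u)\big)=D^2u\,a_N(\nabla u)
\]
almost everywhere, and then the formal computation holds pointwise almost everywhere in the weak sense.

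Finally, \eqref{eq:pohozaev-R} follows by testing the distributional identity with smooth approximations of $\mathbf 1_{W_R}$ depending on $F^0$. Passing to the limit uses continuity of the field and the $C^2$ boundary $\pa W_R$. The $x-y$ version follows by translating coordinates.
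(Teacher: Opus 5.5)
There is a genuine gap, and it lies where you abandon the domain-variation argument. Your first route (testing with $\phi\,(x\cdot\nabla u_\var)$) stalls, as you say, on the commutator $\int\phi\,(a_N(\nabla u)-a_N(\nabla u_\var))\cdot D^2u_\var\,x$, which is only $O(\var^{2\alpha-1})$. But your reason for dropping inner variations, that ``the weak form is stationary only for outer variations,'' is false here, and this is the missing idea.

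Freeze $f=Ve^{Nu}$. For every $w$ with $w-u\in W^{1,N}_0(D)$, convexity of $\Phi_N$ gives $\int_D[\Phi_N(\nabla w)-fw]\,\d x\ge\int_D[\Phi_N(\nabla u)-fu]\,\d x+\int_D[a_N(\nabla u)\cdot\nabla(w-u)-f(w-u)]\,\d x$. The weak equation makes the last integral vanish, so $u$ minimizes the frozen functional on $u+W^{1,N}_0(D)$. The inner variations $w_t=u\circ T_t^{-1}$, with $T_t=\mathrm{id}+tX$ and $X\in C^1_c(D;\RN)$, coincide with $u$ near $\pa D$, so they are admissible competitors. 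After the change of variables $x=T_t(z)$, the map $t\mapsto\int_D[\Phi_N(\nabla w_t)-fw_t]\,\d x$ is differentiable using only $u\in C^1$ and $\Phi_N\in C^1$. Its minimum at $t=0$ gives $\int_D[a_N(\nabla u)\cdot(DX)^T\nabla u-\Phi_N(\nabla u)\diver X]\,\d x=\int_D fX\cdot\nabla u\,\d x$. With $X=\eta\,(x-y)$ and $a_N(\xi)\cdot\xi=N\Phi_N(\xi)$, this is $\diver\mathcal P_{u,y}=-f\,(x-y)\cdot\nabla u$ in $\mathcal D'(\RN)$, with no Hessian anywhere. The product rule for $\diver(\frac{x-y}{N}f)$, $f\in C^1_{loc}$, then finishes. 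This is exactly the paper's proof.

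Your fallback does not close the gap as written. First, $a_N(\nabla u)\in W^{1,2}_{loc}$ is only asserted, and you also write $W^{2,2}_{loc}$. It can in fact be obtained by difference quotients with \eqref{eq:a-monotonicity}--\eqref{eq:a-difference}, since $\nabla u\in L^\infty_{loc}$ and $f$ is locally Lipschitz. However, Lemma~\ref{lem:local-C1-compactness} uses difference quotients only for $N=2$.

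The more serious problem is the next step. For $N>2$ this regularity does not give $u\in W^{2,1}_{loc}$: on $\{\nabla u\ne0\}$ one only has $|D^2u|\lesssim|\nabla u|^{2-N}|\nabla a_N(\nabla u)|$, which may blow up near the degenerate set. So ``$\nabla(\Phi_N(\nabla u))=D^2u\,a_N(\nabla u)$ a.e.'' and ``the formal computation holds pointwise a.e.'' are unjustified. Nor do you show $\mathcal P_u\in W^{1,1}_{loc}$, which you need to identify its distributional divergence with a pointwise one.

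A repair is possible but heavier. $\Phi_N(\nabla u)$ and each product $\pa_iu\,(a_N(\nabla u))_j$ are $C^1$ functions of $a_N(\nabla u)$, being homogeneous of degree $N/(N-1)>1$ in it, so $\mathcal P_u\in W^{1,2}_{loc}$. You would then do the Hessian cancellation only on the open set $\{\nabla u\ne0\}$, where $u\in W^{2,2}_{loc}$, and use that Sobolev gradients vanish a.e. on zero sets. The minimization argument avoids all second-order regularity. For $N=2$ your fallback works as stated, and your derivation of \eqref{eq:pohozaev-R} and of the translated identity is correct.
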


\begin{proof}
We give a direct weak proof of the identity for variable $V$; compare the general Pohozaev identity in Ciraolo and Li~\cite[Lemma~4.2]{CiraoloLi2024}. Set $f(x)=V(x)e^{Nu(x)}$. On a bounded smooth domain $D\Subset\RN$, convexity of $\Phi_N$ and the weak equation imply
\[
 \int_D[\Phi_N(\nabla w)-fw]\,\d x
 \ge \int_D[\Phi_N(\nabla u)-fu]\,\d x,
 \quad w-u\in W^{1,N}_0(D).
\]
Here $f$ is kept fixed. Let $X\in C_c^1(D;\RN)$, set $T_t(x)=x+tX(x)$, and take $w_t=u\circ T_t^{-1}$ for $|t|$ small. Differentiation at $t=0$, using a change of variables for the gradient term, gives
\[
 \int_D[a_N(\nabla u)\cdot (DX)^T\nabla u
                 -\Phi_N(\nabla u)\diver X]\,\d x
 =\int_D f(x)X(x)\cdot\nabla u(x)\,\d x.
\]
The differentiation is justified by $u\in C^1(\overline D)$ and $\Phi_N\in C^1(\RN)$; it does not require a Hessian of $u$ or of $\Phi_N$ at zero. For $X(x)=\eta(x)(x-y)$, the identity $a_N(\nabla u)\cdot\nabla u=N\Phi_N(\nabla u)$ cancels the terms containing $\eta$ but not $\nabla\eta$. Consequently,
\[
 \diver\mathcal P_{u,y}
 =-f(x)\,(x-y)\cdot\nabla u(x)
 \quad\text{ in }\, \mathcal D'(\RN).
\]
Since $f\in C^1_{loc}$, the product rule gives
\[
 \diver\Big(\frac{x-y}{N}f(x)\Big)
 =f(x)+\frac{(x-y)\cdot\nabla V(x)}{N}e^{Nu(x)}
       +f(x)\,(x-y)\cdot\nabla u(x).
\]
Adding these identities proves the distributional formula. Testing with radial cutoffs in $F^0(x)$, by approximation, yields the boundary identity for almost every radius. Both sides are continuous in $R>0$, by continuity of the vector field, the integrand, and the parametrization $\pa  W_R=R\pa  W_1$. Thus it holds for every $R>0$. The translated vector field is treated in the same way.
\end{proof}

Lemma~\ref{lem:good-radii} gives the vanishing boundary mass in \eqref{eq:PB-boundary}. The global Pohozaev identity uses both limits in \eqref{eq:PB-boundary} for the same radii.

\begin{prop}\label{prop:global-pohozaev}
Let $u$ satisfy the hypotheses of Theorem~\ref{thm:threshold} and the Pohozaev condition at infinity (Definition~\ref{def:pohozaev-infinity}). Set $D(x)=x\cdot\nabla V(x)$. If at least one of $D_+e^{Nu}$ and $D_-e^{Nu}$ belongs to $L^1(\RN)$, then $|D|e^{Nu}\in L^1(\RN)$ and
\be \label{eq:global-pohozaev}
 \gamma_0^{N-1}((N-1)\gamma_0-N)
 =\frac1{N\kap _N}\int_{\RN}x\cdot\nabla V\,e^{Nu}\,\d x.
\ee
In particular, if $x\cdot\nabla V\le0$, then
\[
 (-x\cdot\nabla V)e^{Nu}\in L^1(\RN).
\]
\end{prop}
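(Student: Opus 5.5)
The plan is to evaluate the local identity \eqref{eq:pohozaev-R} of Lemma~\ref{lem:local-pohozaev} on the radii $r_j\to\infty$ from the Pohozaev condition at infinity, and then pass to the limit term by term.

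\textbf{Boundary side.} Since $|x\cdot\nu|\le C_F r_j$ on $\pa W_{r_j}$, the first limit in \eqref{eq:PB-boundary} gives
\[
 \int_{\pa W_{r_j}}\frac{x\cdot\nu}{N}Ve^{Nu}\,\d\mathcal H^{N-1}\to0 .
\]
The second limit gives $\int_{\pa W_{r_j}}\mathcal P_u\cdot\nu\,\d\mathcal H^{N-1}\to(N-1)\kap_N\gamma_0^N$. So the left-hand side of \eqref{eq:pohozaev-R} at $R=r_j$ tends to $(N-1)\kap_N\gamma_0^N$.

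\textbf{Interior side.} The right-hand side of \eqref{eq:pohozaev-R} splits as
\[
 \int_{W_{r_j}}Ve^{Nu}\,\d x+\frac1N\int_{W_{r_j}}De^{Nu}\,\d x .
\]
The first integral tends to $M=N\kap_N\gamma_0^{N-1}$ by $Ve^{Nu}\in L^1$ and dominated convergence. Hence
\[
 \frac1N\int_{W_{r_j}}De^{Nu}\,\d x\to(N-1)\kap_N\gamma_0^N-N\kap_N\gamma_0^{N-1}=:L ,
\]
which is a finite real number.

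\textbf{Integrability.} Suppose $D_+e^{Nu}\in L^1$; the case $D_-e^{Nu}\in L^1$ is symmetric. Then
\[
 \int_{W_{r_j}}D_-e^{Nu}\,\d x=\int_{W_{r_j}}D_+e^{Nu}\,\d x-\int_{W_{r_j}}De^{Nu}\,\d x\to\|D_+e^{Nu}\|_{L^1}-NL,
\]
which is finite. Since $W_{r_j}$ exhausts $\RN$ (take $r_j$ increasing) and the integrands are nonnegative, monotone convergence gives $D_-e^{Nu}\in L^1$. Therefore $|D|e^{Nu}\in L^1$.

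\textbf{Conclusion.} Dominated convergence now gives $\int_{\RN}De^{Nu}\,\d x=NL$. Dividing by $N\kap_N$ yields
\[
 \frac1{N\kap_N}\int_{\RN}De^{Nu}\,\d x=\gamma_0^{N-1}\big((N-1)\gamma_0-N\big),
\]
which is \eqref{eq:global-pohozaev}. The final claim follows because $x\cdot\nabla V\le0$ means $D_+=0$, which is trivially integrable.

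\textbf{Main obstacle.} The step needing care is that both limits in \eqref{eq:PB-boundary} hold along the \emph{same} radii $r_j$. This is what lets us extract a finite limit of the truncated $D$-integrals before knowing integrability. Definition~\ref{def:pohozaev-infinity} provides exactly this, so no further estimate is required. One also checks that \eqref{eq:pohozaev-R} holds for every $R$, not just almost every $R$; Lemma~\ref{lem:local-pohozaev} already states this.
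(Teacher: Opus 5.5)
Your proposal is correct and matches the paper's proof step for step. You evaluate the local identity \eqref{eq:pohozaev-R} at the common radii $r_j$, discard the $Ve^{Nu}$ boundary term via $|x\cdot\nu|\le C_F r_j$, and use $M=N\kappa_N\gamma_0^{N-1}$ to obtain a finite limit of the truncated $D$-integrals. You then pass from one-sided to full integrability by monotone convergence on the increasing Wulff balls.
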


\begin{proof}
Choose the increasing sequence in Definition~\ref{def:pohozaev-infinity}. Since $|x\cdot\nu|\le C_Fr_j$ on $\pa \W_{r_j}$, the boundary term involving $Ve^{Nu}$ in \eqref{eq:pohozaev-R} tends to zero as $j\to\infty$. Absolute mass integrability and the prescribed Pohozaev boundary limit give
\be \label{eq:pre-global}
 (N-1)\kap _N\gamma_0^N
 =M+\frac1N\lim_{j\to\infty}\int_{\W_{r_j}}D e^{Nu}\,\d x.
\ee
Suppose $D_+e^{Nu}\in L^1$. Its integral over $\W_{r_j}$ has a finite limit. Identity \eqref{eq:pre-global} then bounds the integrals of $D_-e^{Nu}$ on these increasing balls. Monotone convergence proves that $D_-e^{Nu}\in L^1$. Interchanging the two signs proves the other case. Thus the limit in \eqref{eq:pre-global} is an absolutely convergent Lebesgue integral. Substitution of $M=N\kap _N\gamma_0^{N-1}$ gives \eqref{eq:global-pohozaev}.
\end{proof}

A polynomial upper bound on $|V|$ is not an estimate on $|\nabla V|$. Proposition~\ref{prop:global-pohozaev} assumes one-sided derivative integrability when $x\cdot\nabla V$ changes sign; the upper growth assumption alone does not imply it.

\subsection{Boundary criteria and translation identities}

Finite exterior gradient error gives both limits in \eqref{eq:PB-boundary} for the same radii.
\begin{prop}\label{prop:finite-gradient-error}
A solution satisfying the hypotheses of Theorem~\ref{thm:threshold} and $\cG_\infty^{(N)}(u)<\infty$ satisfies the Pohozaev condition at infinity.
\end{prop}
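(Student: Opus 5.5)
The plan is to reduce Proposition~\ref{prop:finite-gradient-error} to Proposition~\ref{prop:annular-gradient-error-PB}. It suffices to show $\liminf_{R\to\infty}\mathfrak G_N(R)=0$. Set $g:=F(\nabla u+\gamma_0\nabla\ln F^0)^N$, which is nonnegative and, by hypothesis, lies in $L^1(\{F^0>1\})$. For every $R\ge1$ the annulus $W_{2R}\setminus W_R$ is contained in $\{F^0>1\}$, so
\[
 \mathfrak G_N(R)=\int_{W_{2R}\setminus W_R}g\,\d x\le\int_{\{F^0>R\}}g\,\d x\to0
 \quad\text{as } R\to\infty,
\]
by dominated (or monotone) convergence applied to the tails of an integrable function. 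Hence in fact $\lim_{R\to\infty}\mathfrak G_N(R)=0$, which is stronger than the $\liminf$ needed.

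Next I would invoke Proposition~\ref{prop:annular-gradient-error-PB} directly with the sequence $R_j=2^j$. Its proof uses only the hypotheses of Theorem~\ref{thm:threshold}, which are assumed here: $M>0$, so $\gamma_0$ is well defined, and $f\in L^1$. Through the coarea selection \eqref{eq:annular-good-radius-N} it produces radii $r_j\in[R_j,2R_j]$ at which both $r_j\int_{\pa W_{r_j}}(|f|+|q|^N)\to0$ and, via Lemma~\ref{lem:pohozaev-boundary-comparison}, the Pohozaev flux converges to $(N-1)\kap_N\gamma_0^N$. The same radii serve both limits, and since $r_j\in[2^j,2^{j+1}]$ they are strictly increasing, as Definition~\ref{def:pohozaev-infinity} requires.

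There is essentially no obstacle; the only point to check is that no growth bound on $V$ enters. The argument in Proposition~\ref{prop:annular-gradient-error-PB} never uses \eqref{eq:polynomial-upper}, and finite exterior error replaces the whole compactness machinery of Section~\ref{sec:annular-compactness}. I would also remark that the measurability of $g$ and the use of the coarea formula are justified because $u\in C^1_{loc}$ and $F^0\in C^2(\RN\setminus\{0\})$.
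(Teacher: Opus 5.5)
Your argument is correct and matches the paper's proof: finiteness of $\cG_\infty^{(N)}(u)$ gives $\mathfrak G_N(R)\to0$, since $\mathfrak G_N(R)$ is bounded by a tail integral of an integrable function, and Proposition~\ref{prop:annular-gradient-error-PB} then supplies the radii. One small correction: $r_j\in[2^j,2^{j+1}]$ only makes the radii nondecreasing, because consecutive radii could both equal $2^{j+1}$. Passing to a subsequence, which Definition~\ref{def:pohozaev-infinity} explicitly allows, makes them strictly increasing.
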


\begin{proof}
The finiteness of $\cG_\infty^{(N)}(u)$ implies $\mathfrak G_N(R)\to0$ as $R\to\infty$. Proposition~\ref{prop:annular-gradient-error-PB} applies.
\end{proof}

\begin{rem}
The converse of Proposition~\ref{prop:finite-gradient-error} fails: Proposition~\ref{prop:infinite-gradient-error-example} gives $\cG_\infty^{(N)}(u)=\infty$ but $\mathfrak G_N(R)\to0$. Moreover, Remark~\ref{rem:upper-growth-planar} gives the Pohozaev condition at infinity with $\liminf_{R\to\infty}\mathfrak G_2(R)>0$. Thus the criterion in Proposition~\ref{prop:annular-gradient-error-PB} is not necessary either.
\end{rem}

Theorem~\ref{thm:annular-pohozaev} gives the boundary assumption of Proposition~\ref{prop:global-pohozaev}. Under the stated one-sided derivative integrability, the annular estimates also give the following directional identity.

\begin{prop}
Assume the hypotheses of Theorem~\ref{thm:annular-pohozaev}. For a fixed vector $e\in\RN$, set $D_e=(e\cdot\nabla V)e^{Nu}$. If at least one of $(D_e)_+$ and $(D_e)_-$ is integrable, then $D_e\in L^1(\RN)$ and
\[
 \int_{\RN}(e\cdot\nabla V)e^{Nu}\,\d x=0.
\]
In particular, if $|\nabla V|e^{Nu}\in L^1(\RN)$, then
\[
 \int_{\RN}\nabla V\,e^{Nu}\,\d x=0.
\]
\end{prop}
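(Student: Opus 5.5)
The plan is to apply the translated local Pohozaev identity of Lemma~\ref{lem:local-pohozaev} in its infinitesimal-translation form and to pass to the limit along the radii $r_j$ of Theorem~\ref{thm:annular-pohozaev}. For a constant vector field $e$, the same weak computation that produced $\diver\mathcal P_{u,y}$ gives, with $X=\eta e$,
\[
 \diver\Big((e\cdot\nabla u)a_N(\nabla u)-e\,\Phi_N(\nabla u)+\frac{e}{N}Ve^{Nu}\Big)=\frac{e\cdot\nabla V}{N}e^{Nu}
 \quad\text{in }\mathcal D'(\RN).
\]
Alternatively, one can subtract the identities of Lemma~\ref{lem:local-pohozaev} for $y=0$ and $y=-e$, since $\mathcal P_{u,-e}-\mathcal P_{u,0}$ is exactly the translation field and the zeroth-order terms $Ve^{Nu}$ cancel. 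Integrating over $\W_{r_j}$ then gives
\[
 \int_{\pa\W_{r_j}}\Big[(e\cdot\nabla u)a_N(\nabla u)\cdot\nu-\Phi_N(\nabla u)\,e\cdot\nu+\frac{e\cdot\nu}{N}Ve^{Nu}\Big]\d\mathcal H^{N-1}
 =\frac1N\int_{\W_{r_j}}(e\cdot\nabla V)e^{Nu}\,\d x.
\]

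Next I show that the boundary side tends to zero. The $Ve^{Nu}$ term is bounded by $r_j^{-1}\cdot r_j\int_{\pa\W_{r_j}}|f|\to0$, using \eqref{eq:PB-boundary}. For the gradient terms, compare with $u_0=-\gamma_0\ln F^0$ as in Lemma~\ref{lem:pohozaev-boundary-comparison}. The translation field $T_u:=(e\cdot\nabla u)a_N(\nabla u)-e\Phi_N(\nabla u)$ satisfies
\[
 |T_u-T_{u_0}|\le C(R^{-1}+|q|)^{N-1}|q|\le C(R^{1-N}|q|+|q|^N)\quad\text{on }\pa\W_R,
\]
which is the earlier estimate divided by $R$. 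The radii $r_j$ from the proof of Proposition~\ref{prop:annular-gradient-error-PB} satisfy $r_jA_j\to0$, where $A_j=\int_{\pa\W_{r_j}}|q|^N$. H\"older's inequality then gives
\[
 \int_{\pa\W_{r_j}}|T_u-T_{u_0}|\le Cr_j^{-1}\big[(r_jA_j)^{1/N}+r_jA_j\big]\to0.
\]
For the model profile, $\int_{\pa\W_R}T_{u_0}\cdot\nu$ is independent of $R$ by scaling, since $T_{u_0}$ is homogeneous of degree $-N$ and divergence free away from $0$. It is in fact bounded by $CR^{-N}R^{N-1}\to0$, so it vanishes; no symmetry of $\W_1$ is needed. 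Hence the left side tends to $0$.

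It remains to handle integrability. Suppose $(D_e)_+\in L^1$. Then $\int_{\W_{r_j}}(D_e)_+$ converges, and since $\int_{\W_{r_j}}D_e\to0$, the integrals $\int_{\W_{r_j}}(D_e)_-$ are bounded. Monotone convergence along the increasing $r_j$ gives $(D_e)_-\in L^1$, and the limit identifies $\int_{\RN}D_e=0$. The other sign is symmetric. The vector statement follows by taking $e=e_1,\dots,e_N$.

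The main obstacle is verifying that the weak translation identity holds for every radius and not just almost every one, so that it can be evaluated at the specific $r_j$. I would handle this exactly as in Lemma~\ref{lem:local-pohozaev}, by continuity of the $C^1$ integrand in $R$. Checking the decay of the model term is routine.
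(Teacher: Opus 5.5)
Your proof is correct and follows essentially the same route as the paper. It uses the translation identity obtained by subtracting translated local Pohozaev identities (valid at every radius), the radii from the coarea selection in Proposition~\ref{prop:annular-gradient-error-PB}, vanishing of both boundary terms, and the positive/negative-part argument along increasing radii. The only difference is that the paper skips your comparison with $T_{u_0}$: it bounds $|\mathcal T_e|\le C_F|e|\,|\nabla u|^N\le C(r^{-N}+|q|^N)$ directly on $\pa\W_r$, which gives $\int_{\pa\W_{r_j}}|\mathcal T_e|\,\d\mathcal H^{N-1}\le C(r_j^{-1}+A_j)\to0$ without H\"older's inequality or the homogeneity argument for the model profile.
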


\begin{proof}
Define
\[
 \mathcal T_e=(e\cdot\nabla u)a_N(\nabla u)-e\Phi_N(\nabla u).
\]
Subtracting the translated versions of Lemma~\ref{lem:local-pohozaev} gives
\be \label{eq:local-translation}
 \diver\Big(\mathcal T_e+\frac eN Ve^{Nu}\Big)
 =\frac1N(e\cdot\nabla V)e^{Nu}
 \quad\text{ in }\, \mathcal D'(\RN).
\ee
By Theorem~\ref{thm:annular-pohozaev} and the coarea selection in Proposition~\ref{prop:annular-gradient-error-PB}, there are strictly increasing $r_j\to\infty$ such that, with $q=\nabla(u+\gamma_0\ln  F^0)$,
\[
 r_j\int_{\pa \W_{r_j}}(|Ve^{Nu}|+|q|^N)\,\d\mathcal H^{N-1}\to0.
\]
The estimate $|\mathcal T_e|\le C_F|e|\,|\nabla u|^N$ implies
\[
 \int_{\pa \W_{r_j}}|\mathcal T_e|\,\d\mathcal H^{N-1}
 \le C|e|\Big(r_j^{-1}+
 \int_{\pa \W_{r_j}}|q|^N\,\d\mathcal H^{N-1}\Big)\to0.
\]
The boundary term involving $Ve^{Nu}$ also tends to zero. Integration of \eqref{eq:local-translation} therefore gives $\int_{\W_{r_j}}D_e\,\d x\to0$. The same positive-part/negative-part argument as in Proposition~\ref{prop:global-pohozaev} makes $D_e$ absolutely integrable and identifies its integral as zero. Applying the scalar identity to each coordinate direction proves the vector identity.
\end{proof}

Consequently, if a nonzero direction $e$ satisfies $e\cdot\nabla V\ge0$ everywhere and $e\cdot\nabla V\not\equiv0$, no solution under Theorem~\ref{thm:annular-pohozaev} exists. No independent derivative integrability assumption is needed in this one-sign case. If both the global radial identity and the vector translation identity hold, then
\[
 \int_{\RN}(x-z)\cdot\nabla V\,e^{Nu}\,\d x
 =\int_{\RN}x\cdot\nabla V\,e^{Nu}\,\d x,
 \quad\forall\,z\in\RN.
\]
Thus the integral is independent of the choice of origin under these integrability hypotheses.

\subsection{Proof of Theorem~\ref{thm:rigidity}}

Combining the global identity with the sharp mass threshold produces an explicit lower bound for the weighted radial derivative. When the cone exponent is nonnegative, the relevant expression is monotone in $\gamma_0$.

\begin{prop}
Under the hypotheses of Proposition~\ref{prop:global-pohozaev}, assume additionally that $\gamma\ge0$ and set  $b_\gamma=1+\gamma/N$. Then
\be \label{eq:radial-derivative-bound}
 \int_{\RN}x\cdot\nabla V\,e^{Nu}\, \d x
 \ge N\kap _N b_\gamma^{N-1}((N-1)b_\gamma-N).
\ee
Equality holds if and only if $\gamma_0=b_\gamma$.
\end{prop}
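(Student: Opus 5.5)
The plan is to combine the global Pohozaev identity \eqref{eq:global-pohozaev} of Proposition~\ref{prop:global-pohozaev} with the threshold $\gamma_0\ge1+\gamma/N=b_\gamma$ from Theorem~\ref{thm:threshold}. Under the hypotheses of Proposition~\ref{prop:global-pohozaev}, the function $x\cdot\nabla V\,e^{Nu}$ is absolutely integrable and
\[
 \int_{\RN}x\cdot\nabla V\,e^{Nu}\,\d x=N\kap_N\,g(\gamma_0),
 \qquad g(s):=s^{N-1}((N-1)s-N)=(N-1)s^N-Ns^{N-1}.
\]
So \eqref{eq:radial-derivative-bound} is the inequality $g(\gamma_0)\ge g(b_\gamma)$, with equality precisely when $\gamma_0=b_\gamma$.

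Both claims reduce to monotonicity of $g$ on the half-line $[b_\gamma,\infty)$. I will compute
\[
 g'(s)=N(N-1)s^{N-2}(s-1).
\]
This is strictly positive for $s>1$. The hypothesis $\gamma\ge0$ gives $b_\gamma\ge1$, so $g$ is strictly increasing on $[b_\gamma,\infty)$. This holds even in the boundary case $b_\gamma=1$, where $g'$ vanishes only at the single point $s=1$.

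Since $\gamma_0\ge b_\gamma$ by \eqref{eq:threshold}, I get $g(\gamma_0)\ge g(b_\gamma)$. Multiplying by $N\kap_N$ yields \eqref{eq:radial-derivative-bound}. Strict monotonicity shows that $g(\gamma_0)=g(b_\gamma)$ forces $\gamma_0=b_\gamma$, and the converse is immediate.

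There is no genuine obstacle here. The only point needing care is why $\gamma\ge0$ is imposed: for $-N<\gamma<0$ one has $b_\gamma<1$, and $g$ decreases on $(b_\gamma,1)$, so the inequality could fail. I will note this briefly.
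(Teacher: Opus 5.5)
Your proposal is correct and follows the paper's proof: the paper likewise substitutes the global Pohozaev identity \eqref{eq:global-pohozaev} and uses that $H(s)=s^{N-1}((N-1)s-N)$ is strictly increasing for $s\ge1$, since $H'(s)=N(N-1)s^{N-2}(s-1)$, together with $\gamma_0\ge b_\gamma\ge1$. Your remark on why $\gamma\ge0$ is needed (so that $b_\gamma\ge1$) is a correct addition.
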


\begin{proof}
For $s\ge1$, the function $H(s)=s^{N-1}((N-1)s-N)$ is strictly increasing, since
\[
 H'(s)=N(N-1)s^{N-2}(s-1).
\]
Apply \eqref{eq:global-pohozaev} and $\gamma_0\ge b_\gamma\ge1$.
\end{proof}

This gives an integral nonexistence condition without pointwise monotonicity of $V$. In particular, a solution is impossible if $\gamma\ge N/(N-1)$ and the integral in \eqref{eq:radial-derivative-bound} is negative, or if $\gamma>N/(N-1)$ and that integral is nonpositive. At $\gamma=N/(N-1)$, a zero integral is consistent with the two identities only when $\gamma_0=N/(N-1)$; these identities do not exclude that equality case.

\begin{proof}[Proof of Theorem~\ref{thm:rigidity}]
Under $x\cdot\nabla V\le0$, Proposition~\ref{prop:global-pohozaev} makes the weighted derivative integrable. Its integral is strictly negative. Indeed, if the integral vanished, continuity and $e^{Nu}>0$ would imply $x\cdot\nabla V\equiv0$. For $r>0$ and $\theta\in\mathbb S^{N-1}$, $\d V(r\theta)/\d r=0$; continuity at zero gives $V(r\theta)=V(0)$. This contradicts \eqref{eq:cone-growth} with $\gamma\ge N/(N-1)>0$. Thus \eqref{eq:global-pohozaev} gives $0<\gamma_0<N/(N-1)$, contrary to Theorem~\ref{thm:threshold}.
\end{proof}

\begin{rem}
The lower bound \eqref{eq:threshold} and the restriction from \eqref{eq:global-pohozaev} have the same threshold when $1+\gamma/N=N/(N-1)$. Proposition~\ref{prop:monotone-sharpness} shows that the resulting exponent is optimal: every smaller nonnegative cone exponent occurs for a sign-changing prescribed function with $x\cdot\nabla V<0$ for $x\ne0$, admitting a solution. For \eqref{eq:larger-upper-growth}, Remark~\ref{rem:upper-growth-planar} proves optimality of the power $2$ for planar annular convergence, not for nonexistence; the higher-dimensional case is not settled here.
\end{rem}

\begin{proof}[Proof of Corollary~\ref{cor:unconditional-rigidity}]
Theorem~\ref{thm:annular-pohozaev} gives the Pohozaev condition at infinity. Theorem~\ref{thm:rigidity} proves nonexistence.
\end{proof}

Combining Theorem~\ref{thm:rigidity} with Proposition~\ref{prop:larger-upper-growth} gives the same nonexistence conclusion when the polynomial upper bound is replaced by \eqref{eq:larger-upper-growth}.

\begin{proof}[Proof of Corollary~\ref{cor:finite-gradient-error}]
Proposition~\ref{prop:finite-gradient-error} gives the Pohozaev condition at infinity, and Theorem~\ref{thm:rigidity} proves nonexistence.
\end{proof}

Corollary~\ref{cor:polynomial} includes $V=c-S_{2m}$, where $c\in\R$ and $S_{2m}\not\equiv0$ is a nonnegative homogeneous polynomial of degree $2m\ge2$. The polynomial may vanish on a subspace or an algebraic cone; only one open cone where $S_{2m}>0$ is used. We verify the growth and monotonicity assumptions below.

\begin{proof}[Proof of Corollary~\ref{cor:polynomial}]
Write $P=P_d+\cdots+P_0$, where $P_j$ is homogeneous of degree $j$ and $P_d\not\equiv0$. For $x=r\theta$, divide $x\cdot\nabla P(x)\le0$ by $r^d$ and let $r\to\infty$. Euler's identity gives $dP_d(\theta)\le0$. Since $P$ is nonconstant, $d\ge1$. If $d$ were odd, $P_d(-\theta)=-P_d(\theta)$ and $P_d\le0$ would give $P_d\equiv0$. Thus $d$ is even and $d\ge2$.

There exists a relatively open spherical patch on which $P_d<0$. On a smaller patch with compact closure, the negativity is uniform, and the lower-order terms are $O(r^{d-1})$ uniformly. Hence $|P(r\theta)|\ge cr^d$ there for large $r$, and $|P(x)|\le C(1+|x|)^d$ on $\RN$. Therefore $P$ satisfies the cone lower bound and the polynomial upper bound in Corollary~\ref{cor:unconditional-rigidity}. Since $d\ge2\ge N/(N-1)$, Corollary~\ref{cor:unconditional-rigidity} applies.

For $V=1+P_m$, Euler's identity gives $x\cdot\nabla V=mP_m\le0$, and the same argument verifies the cone and upper growth bounds.
\end{proof}

\section{Sharpness and geometric consequences}\label{sec:applications}

We construct equality examples and examine the strength of the annular conclusions. In the Euclidean plane the signed mass also gives an obstruction to completeness. We then construct prescribed functions satisfying $x\cdot\nabla V<0$ for $x\ne0$ to prove optimality of the rigidity exponent.

\subsection{Conformal curvature and sharpness}\label{subsec:curvature-sharpness}

For $N=2$ and $F(\xi)=|\xi|$, the metric $g=e^{2u}|\d x|^2$ has Gaussian curvature
\[
 K_g=-e^{-2u}\Delta u=V,\quad
 \int_{\R^2}|K_g|\,\d A_g=\int_{\R^2}|V|e^{2u}\,\d x<\infty.
\]
Theorem~\ref{thm:threshold} therefore gives
\be \label{eq:total-gaussian-curvature-bound}
 \int_{\R^2}K_g\,\d A_g\ge2\pi+\pi\gamma.
\ee
The planar equation gives $u\in C^2_{loc}(\R^2)$. Combining
\eqref{eq:total-gaussian-curvature-bound} with the Cohn--Vossen inequality gives the following consequence; no finite-area assumption is imposed. For related finite-area results, see Lytchak~\cite{Lytchak2023}.

\begin{cor}\label{cor:completeness}
Let $N=2$, $F(\xi)=|\xi|$, and let $u$ satisfy the hypotheses of Theorem~\ref{thm:threshold}. If $g=e^{2u}|\d x|^2$ is complete, then
\[
 2\pi+\pi\gamma\le\int_{\R^2}K_g\,\d A_g\le2\pi,
 \quad  \gamma\le0.
\]
Thus $\gamma>0$ excludes completeness. When $\gamma=0$, completeness implies $M=2\pi$.
\end{cor}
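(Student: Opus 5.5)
The plan is to combine the lower bound \eqref{eq:total-gaussian-curvature-bound} from Theorem~\ref{thm:threshold} with the Cohn--Vossen inequality~\cite[Satz~6]{CohnVossen1935}. For a complete Riemannian surface homeomorphic to $\R^2$ whose curvature is absolutely integrable, that inequality reads $\int K_g\,\d A_g\le2\pi\chi(\R^2)=2\pi$.

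\textbf{Step 1: regularity and finite total curvature.} The metric $g=e^{2u}|\d x|^2$ needs enough regularity for Cohn--Vossen. In the plane, $\Delta u=-Ve^{2u}$ with $V\in C^1$ and $u\in C^1$, so the right-hand side is $C^1_{loc}$. Schauder theory then gives $u\in C^{2,\alpha}_{loc}$, so $g$ is a $C^2$ metric and $K_g=-e^{-2u}\Delta u=V$ holds classically. The hypothesis in \eqref{eq:basic-class} gives $\int|K_g|\,\d A_g=\int|V|e^{2u}\,\d x<\infty$. Hence the total curvature is an absolutely convergent integral equal to the signed mass $M$.

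\textbf{Step 2: combine the two inequalities.} By Theorem~\ref{thm:threshold} with $N=2$ and $\kap_2=\pi$, we have $\gamma_0=M/(2\pi)\ge1+\gamma/2$, that is, $M\ge2\pi+\pi\gamma$. If $g$ is complete, Cohn--Vossen gives $M\le2\pi$. Chaining the two gives $2\pi+\pi\gamma\le M\le2\pi$, so $\gamma\le0$. Consequently $\gamma>0$ is incompatible with completeness. For $\gamma=0$, both bounds equal $2\pi$, forcing $M=2\pi$.

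\textbf{Main obstacle.} The only delicate point is to make sure the version of Cohn--Vossen invoked matches our setting. That version must apply to complete, possibly infinite-area surfaces with absolutely integrable (not necessarily nonnegative) curvature. The underlying manifold here is $\R^2$, noncompact with Euler characteristic $1$. Cohn--Vossen's Satz~6 is stated precisely for complete open surfaces of finite total absolute curvature, and no finite-area condition is required. I would therefore simply check these hypotheses, namely completeness, topology of the plane, and $K_g\in L^1(\d A_g)$, and then cite it.
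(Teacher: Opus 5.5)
Your proposal is correct and follows the paper's proof. You use $C^2$ regularity from the planar equation, $K_g=V$ with finite absolute total curvature, and then chain the Cohn--Vossen bound $\int_{\R^2}K_g\,\d A_g\le 2\pi$ with \eqref{eq:total-gaussian-curvature-bound}.

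The one difference is the citation. The paper cites Huber~\cite{Huber1957} alongside Satz~6, and Huber is the safer reference for the inequality for general (non-analytic) complete metrics with integrable curvature.
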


\begin{proof}
The metric has the topology of the plane and finite absolute total curvature. The Cohn--Vossen inequality for complete surfaces therefore gives $\int_{\R^2}K_g\,\d A_g\le2\pi$; see \cite[Satz~6]{CohnVossen1935} and \cite{Huber1957}. Together with \eqref{eq:total-gaussian-curvature-bound}, this proves Corollary~\ref{cor:completeness}.
\end{proof}

For $F(\xi)^2=\xi^TA\xi$ in dimension two, the change of variables $x=A^{1/2}y$ reduces both the equation and the metric in Section~\ref{sec:introduction} to the Euclidean case. Moreover,
\[
 \kap _2=\pi\sqrt{\det A},\quad
 \int_{\R^2}K_g\,\d A_g
 =\frac{M}{\sqrt{\det A}}=\frac{\pi M}{\kap _2}.
\]
Thus completeness again implies $\gamma\le0$; if also $\gamma=0$, then $M=2\kap _2$.

In the Euclidean plane, the signed mass also describes the boundary term in the Gauss--Bonnet formula.
If $k_g$ denotes geodesic curvature with the boundary orientation of $B_R$,
then
\be \label{eq:gauss-bonnet-radius}
 \int_{\pa  B_R}k_g\,\d s_g
 =2\pi-\int_{B_R}Ve^{2u}\,\d x
 \to  2\pi-M\quad\text{ as }\, R\to\infty.
\ee
Thus \eqref{eq:total-gaussian-curvature-bound} makes the limit in
\eqref{eq:gauss-bonnet-radius} at most $-\pi\gamma$.
Corollary~\ref{cor:completeness} compares this curvature deficit with the
Cohn--Vossen inequality; no pointwise metric expansion is used.

Equality in the lower bound can occur when $V$ changes sign. We now construct such a case explicitly; the same construction also shows why a bounded correction to the leading logarithm would be too restrictive.

\begin{prop}\label{prop:equality-example}
Let $\gamma>-N$. Then there exist $V\in C^1(\RN)$ and a bounded-above entire solution $u\in C^2(\RN)$ of \eqref{maineq} such that
\be \label{eq:equality-coefficient-growth}
 c|x|^\gamma\le |V(x)|\le C|x|^\gamma,
 \quad \forall\, x\in\RN\ \text{with }|x|\ge R_0,
\ee
\be \label{eq:equality-mass}
 \int_{\RN}|V|e^{Nu}\, \d x<\infty,
 \quad
 \int_{\RN}Ve^{Nu}\, \d x
 =N\kap _N\Big(1+\frac\gamma N\Big)^{N-1}.
\ee
$V$ is negative outside a compact set and positive somewhere.  The solution has finite exterior gradient error $\cG_\infty^{(N)}(u)<\infty$ and can be chosen so that, with $r=F^0(x)$,
\be \label{eq:equality-profile}
 u(x)=-\Big(1+\frac\gamma N\Big)\ln  r
             -\frac2N\ln \ln  r,
 \quad \forall\, r\ge e^2.
\ee
These examples satisfy \eqref{eq:polynomial-upper} with $\si =\max\{\gamma,0\}$.
\end{prop}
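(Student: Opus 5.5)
Since $V$ is ours to choose, the plan is to fix a radial profile $u$ in the anisotropic variable $r=F^0(x)$ and then \emph{define} $V:=-e^{-Nu}Q_N u$. Write $b=1+\gamma/N>0$ and take $u(x)=\phi(F^0(x))$, where $\phi\in C^2((0,\infty))$ is smooth and even-type near $0$ (for instance constant on $[0,1]$, or a function of $r^2$ near zero), and equals $-b\ln r-\frac2N\ln\ln r$ for $r\ge e^2$. We choose $\phi'<0$ for $r>0$. By \eqref{eq:polar-identities} and homogeneity, $a_N(\nabla u)=-|\phi'|^{N-1}x/F^0$. The divergence computation in the proof of \eqref{eq:log-boundary} then gives the radial formula
\[
 Q_N u=-r^{1-N}\big(r^{N-1}|\phi'(r)|^{N-1}\big)',\qquad r=F^0(x).
\]
So $V(x)=e^{-N\phi(r)}r^{1-N}\big(r^{N-1}|\phi'|^{N-1}\big)'$.

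\textbf{Regularity near the origin.} We need $V\in C^1$ and $u\in C^2$, although $F^0$ is only $C^2$ away from $0$. The simplest choice is to make $\phi$ constant on $[0,1]$. Then $u$ is smooth near $0$ and $V\equiv 0$ there. Positivity of $V$ ``somewhere'' must then come from the transition region.

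\textbf{Exterior behaviour.} For $r\ge e^2$ we have $r|\phi'|=b+\frac{2}{N\ln r}$, which is decreasing. Hence $V<0$ there, with
\[
 |V|\asymp e^{-N\phi}r^{-N}(\ln r)^{-2}=r^{Nb-N}(\ln r)^{2-2}=r^{\gamma}.
\]
This gives \eqref{eq:equality-coefficient-growth} and \eqref{eq:polynomial-upper} with $\sigma=\max\{\gamma,0\}$; the $C^1$ bound on $V$ away from $0$ is routine. Finite absolute mass follows from \eqref{eq:polar-integration}, since $|V|e^{Nu}\asymp r^{-N}(\ln r)^{-2}$. Finite exterior gradient error follows because $\nabla u+b\nabla\ln F^0$ has size $\asymp 1/(r\ln r)$, so its $N$-th power integrates like $\int dr/(r(\ln r)^N)<\infty$. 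The Pohozaev-type facts are not needed here.

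\textbf{The mass.} Integrating the radial formula with \eqref{eq:polar-integration} gives
\[
 \int_{W_R}Ve^{Nu}\,dx=N\kappa_N R^{N-1}|\phi'(R)|^{N-1}\to N\kappa_N b^{N-1},
\]
using $\phi'=0$ at $0$. This is exactly \eqref{eq:equality-mass}, which also identifies $\gamma_0=b$.

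\textbf{Main obstacle.} The main obstacle is the transition on $[1,e^2]$. The function $\psi(r):=r|\phi'(r)|$ must rise from $0$ at $r=1$ to the value $b+\frac{1}{N}$ at $r=e^2$, matching the exterior formula in $C^1$ (equivalently, $\phi$ in $C^2$). Since $\psi$ increases somewhere in this interval, $V>0$ there, which gives the required sign change. We would build $\psi$ as a smooth monotone-then-decreasing bump: increasing to slightly above $b+\frac1N$, then glued $C^1$ to $b+\frac{2}{N\ln r}$ at $e^2$. Then $\phi(r)=-\int_1^r\psi(s)/s\,ds+\phi(1)$, with the constant $\phi(1)$ fixed so that \eqref{eq:equality-profile} holds exactly for $r\ge e^2$. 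The constant is free, because adding a constant to $u$ only rescales $V$ by a positive factor.

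We must check that $\psi'$ is continuous. This ensures $V$ is continuous, and $V$ is $C^1$ once $\psi$ is $C^2$. Both are arranged by smooth gluing. Finally, boundedness of $u$ from above is clear, since $\phi$ is decreasing.
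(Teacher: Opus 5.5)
Your construction is correct and is essentially the paper's: your $\psi(r)=r|\phi'(r)|$ is the paper's slope function $s(t)$ in the variable $t=\ln r$. The mass comes from the same telescoping identity $\int_{W_R}f\,\mathrm{d}x=N\kappa_N\psi(R)^{N-1}$. The paper realizes your transition bump concretely as $\psi(r)=\chi(\ln r)\big(b+\tfrac{2}{N\ln r}\big)$, with a smooth cutoff $\chi$ vanishing for $\ln r\le 1$ and equal to one for $\ln r\ge2$. This makes $\psi$ smooth, as you need $\psi\in C^2$ for $V\in C^1$, without any separate derivative matching at $r=e^2$.

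The only thing to drop is your alternative ``function of $r^2$ near zero''. For nonquadratic $F$, the Hessian of $(F^0)^2$ is homogeneous of degree zero and discontinuous at the origin. Such a $u$ is therefore generally not $C^2$ there. Your actual choice of $\phi$ constant near $0$ avoids this.
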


\begin{proof}
Set $\beta=1+\gamma/N>0$ and $d=2/N$.  Choose $\chi\in C^\infty(\R)$ with $0\le\chi\le1$, $\chi=0$ on $(-\infty,1]$, and $\chi=1$ on $[2,\infty)$.  Define
\[
 s(t):=
\left\{\begin{aligned}
  &0,&&t\le1,\\
 &\chi(t)(\beta+d/t),&&t>1,
\end{aligned}\right.
 \quad
 U(t):=-2\beta-d\ln 2-\int_2^t s(\tau)\, \d\tau.
\]
The function $s$ is smooth and nonnegative, and $U$ is constant for $t\le1$.  Set $u(x)=U(\ln  F^0(x))$ for $x\ne0$, extending it by this constant to the origin.  Then $u$ is constant on $\W_e$, belongs to $C^2(\RN)$, and is bounded above.  For $t\ge2$,
\[
 U(t)=-\beta t-d\ln  t,
\]
which is \eqref{eq:equality-profile}.

The dual identities give, for $r=F^0(x)>0$,
\[
 a_N(\nabla u)
 =-s(\ln  r)^{N-1}\frac{x}{r^N}.
\]
This vector field vanishes on a neighborhood of the origin.  Thus the function
\be \label{eq:equality-rhs-formula}
 f(x):=-\QN u(x)
 =r^{-N}\frac{\d}{\d t}(s(t)^{N-1})
       \Big|_{t=\ln  r}
\ee
is well defined and $C^1$ on all of $\RN$, with value zero in $\W_e$.  Set $V=fe^{-Nu}$.  Equation~\eqref{maineq} holds by construction and $V\in C^1(\RN)$.

For $t=\ln  r\ge2$, formula \eqref{eq:equality-rhs-formula} becomes
\[
 f(x)=-(N-1)d(\beta+d/t)^{N-2}r^{-N}t^{-2}.
\]
Since $Nd=2$ and $N\beta-N=\gamma$, we obtain the exterior formula
\[
 V(x)=-(N-1)d\Big(\beta+\frac d{\ln  r}\Big)^{N-2}r^\gamma,
 \quad \forall\, r\ge e^2.
\]
The factor in parentheses is bounded above and below by positive constants.  Norm equivalence therefore proves \eqref{eq:equality-coefficient-growth}.  In particular, $V$ is negative outside $\W_{e^2}$.

Polar integration and the substitution $t=\ln  r$ give
\[
 \int_{\RN}|f|\, \d x
 =N\kap _N\int_{-\infty}^{\infty}
       \Big|\frac{\d}{ \d t}s(t)^{N-1} \Big|\, \d t<\infty.
\]
The derivative is smooth on the finite transition interval, vanishes for $t\le1$, and is $O(t^{-2})$ as $t\to\infty$.  Moreover,
\[
 \int_{\RN}f\, \d x
 =N\kap _N[s(t)^{N-1}]_{t=-\infty}^{t=\infty}
 =N\kap _N\beta^{N-1}.
\]
This proves \eqref{eq:equality-mass}.  The identities $\int_{\RN}f\,\d x>0$ and $f<0$ outside $\W_{e^2}$ imply that $f$, and hence $V$, is positive somewhere.  Moreover, \eqref{eq:equality-profile} gives
\[
 \int_{\{r>e^2\}}F(\nabla u+\beta\nabla\ln  r)^N\, \d x
 =N\kap _Nd^N\int_{e^2}^{\infty}\frac{\d r}{r(\ln  r)^N}<\infty.
\]
The gradient error on $\{1<r<e^2\}$ is finite by regularity.  Finally, for $\gamma\ge0$, continuity on a compact set and \eqref{eq:equality-coefficient-growth} give the global bound $|V(x)|\le C(1+|x|)^\gamma$. For $-N<\gamma<0$, $V$ is bounded near infinity, so continuity gives \eqref{eq:polynomial-upper} with $\si =0$.
\end{proof}

\begin{rem}\label{rem:critical-cone-exponent}
The restriction $\gamma>-N$ is sharp for positivity of the signed mass. In Proposition~\ref{prop:equality-example}, take
\[
 s(t)=0\quad(t\le1),\quad s(t)=\frac{\chi(t)}t\quad(t>1),
 \quad U(2)=-\ln 2,\quad U'=-s.
\]
Then $u=U(\ln  F^0)$ is constant near zero and equals $-\ln \ln  r$ for $r=F^0(x)\ge e^2$. Formula~\eqref{eq:equality-rhs-formula} gives
\[
 f=-(N-1)r^{-N}(\ln  r)^{-N},\quad V=-(N-1)r^{-N}
 \quad(r\ge e^2).
\]
The absolute mass is finite since $N>1$, whereas
$\int_{\RN}f\,\d x=N\kap _N[s^{N-1}]_{-\infty}^{\infty}=0$.
Thus a $C^1$ sign-changing prescribed function can satisfy the cone bound with $\gamma=-N$, and hence also with any $\gamma<-N$, without positive signed mass.
\end{rem}

\begin{prop}\label{prop:infinite-gradient-error-example}
Let $\gamma>-N$ and $1-1/N\le a<1$. There exist a sign-changing $V\in C^1(\RN)$ and a solution $u\in C^2(\RN)$ satisfying~\eqref{eq:basic-class} such that
\[
 \int_{\RN}Ve^{Nu}\,\d x
 =N\kap _N(1+\gamma/N)^{N-1},\quad
 \cG_\infty^{(N)}(u)=\infty.
\]
$V$ satisfies the lower cone bound with exponent $\gamma$ on every cone, and for every $\var >0$ it satisfies a polynomial upper bound with exponent $\max\{\gamma+\var ,0\}$. Outside $\W_{e^2}$ the solution can be chosen as
\[
 u(x)=-\beta\ln  r-(\ln  r)^a,
 \quad  r=F^0(x),\quad  \beta=1+\gamma/N.
\]
Nevertheless, $\mathfrak G_N(R)\to0$ as $R\to\infty$.
\end{prop}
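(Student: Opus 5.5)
We mimic the construction in Proposition~\ref{prop:equality-example}, replacing the correction $d\ln t$ by $t^a$. Set $\beta=1+\gamma/N>0$ and take the cutoff $\chi$ as before. Define
\[
 s(t)=\chi(t)\big(\beta+a t^{a-1}\big)\quad(t>1),\qquad s(t)=0\quad(t\le1),
\]
and $U'=-s$, normalized so that $U(t)=-\beta t-t^a$ for $t\ge2$. Then $u=U(\ln F^0)$ is constant on $\W_e$, lies in $C^2(\RN)$, and is bounded above. As in \eqref{eq:equality-rhs-formula}, we get
\[
 f=-\QN u=r^{-N}\tfrac{\d}{\d t}\big(s^{N-1}\big)\big|_{t=\ln r},
\]
which is $C^1$ and vanishes near the origin. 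Put $V=fe^{-Nu}$.

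\textbf{Mass and sign of $V$.} For $t\ge2$ we have
\[
 \tfrac{\d}{\d t}s^{N-1}=(N-1)\big(\beta+at^{a-1}\big)^{N-2}a(a-1)t^{a-2}<0 .
\]
This derivative is $O(t^{a-2})$, which is integrable since $a<1$. Polar integration \eqref{eq:polar-integration} then gives $\int|f|<\infty$ and
\[
 \int f=N\kap_N\big[s^{N-1}\big]_{-\infty}^{\infty}=N\kap_N\beta^{N-1}.
\]
Since $f<0$ outside $\W_{e^2}$ and the total integral is positive, $V$ changes sign.

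\textbf{Growth bounds.} Outside $\W_{e^2}$,
\[
 |V|=(N-1)a(1-a)\big(\beta+at^{a-1}\big)^{N-2}\,t^{a-2}\,r^{\gamma}e^{Nt^a},\qquad t=\ln r .
\]
The factor $t^{a-2}e^{Nt^a}$ tends to infinity, so $|V|\ge c|x|^\gamma$ for large $|x|$ in every direction. This gives the lower cone bound on every cone. Since $t^a=o(t)$, the same factor is $O(r^{\var})$ for every $\var>0$. Continuity near the origin then yields \eqref{eq:polynomial-upper} with exponent $\max\{\gamma+\var,0\}$.

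\textbf{Gradient errors.} By \eqref{eq:polar-identities}, for $r\ge e^2$,
\[
 F\big(\nabla u+\beta\nabla\ln F^0\big)=a(\ln r)^{a-1}/r .
\]
Hence
\[
 \cG^{(N)}_\infty(u)\ge N\kap_N a^N\int_2^\infty t^{N(a-1)}\,\d t=\infty,
\]
because $N(a-1)\ge-1$ when $a\ge1-1/N$. This is exactly where the lower bound on $a$ enters. On the other hand, $\mathfrak G_N(R)$ equals $N\kap_N a^N$ times the integral of $t^{N(a-1)}$ over an interval of length $\ln2$ near $t=\ln R$. This tends to zero because $a<1$.

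The main thing to watch is that $\gamma_0$ in \eqref{eq:threshold} equals $\beta$, so the error is really measured against $\gamma_0\ln F^0$. This follows from the mass computation above. The remaining checks are routine: the $C^1$ regularity at the cutoff interval $t\in[1,2]$ is handled exactly as in Proposition~\ref{prop:equality-example}, and $u\in C^1_{loc}\cap W^{1,N}_{loc}$ is immediate.
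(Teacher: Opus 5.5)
Your proposal is correct and follows essentially the same route as the paper. It uses the same slope $s(t)=\chi(t)(\beta+at^{a-1})$, the same formulas for $f$ and $V=fe^{-Nu}$, and polar integration for the signed mass. The gradient-error computation is also the same: $N(a-1)\ge-1$ makes $\cG_\infty^{(N)}(u)$ diverge, while $a<1$ forces $\mathfrak G_N(R)\to0$.
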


\begin{proof}
Use the cutoff from Proposition~\ref{prop:equality-example} and replace the exterior slope $\beta+d/t$ by $\beta+a t^{a-1}$. Thus, for $t>1$, set $s(t)=\chi(t)(\beta+a t^{a-1})$, set $s=0$ for $t\le1$, and choose $U(2)=-2\beta-2^a$, $U'=-s$. The resulting $u=U(\ln  F^0)$ is constant near zero and belongs to $C^2(\RN)$. Formula~\eqref{eq:equality-rhs-formula}, with $V=fe^{-Nu}$, gives for $t=\ln r\ge2$
\begin{align*}
 f(x)&=-(N-1)a(1-a)(\beta+a t^{a-1})^{N-2}r^{-N}t^{a-2},\\
 V(x)&=-(N-1)a(1-a)(\beta+a t^{a-1})^{N-2}
           r^\gamma t^{a-2}e^{Nt^a}.
\end{align*}
Since $a<1$, $f$ is absolutely integrable. Since $s(-\infty)=0$ and $s(\infty)=\beta$, its signed mass is $N\kap _N\beta^{N-1}$, so $f$ is positive somewhere in the interior. For every $\var>0$,
\[
 t^{a-2}e^{Nt^a}\to\infty,\quad
 t^{a-2}e^{Nt^a-\var t}\to0
 \quad\text{ as }\,t\to\infty.
\]
These limits give the stated lower and upper growth bounds.

On $\{F^0>e^2\}$, the dual identities give
\[
 \int_{\{F^0>e^2\}}F(\nabla u+\beta\nabla\ln F^0)^N\,\d x
 =N\kap _N a^N\int_2^\infty t^{N(a-1)}\,\d t=\infty,
\]
as $N(a-1)\ge-1$. On a fixed-width logarithmic interval, however,
\[
 \mathfrak G_N(R)
 =N\kap _N a^N\int_{\ln R}^{\ln R+\ln2}t^{N(a-1)}\,\d t
 \to0\quad\text{ as }\,R\to\infty,
\]
since $a<1$. This proves the last conclusion.
\end{proof}

These constructions choose $V$ and $u$ simultaneously; they do not prove existence for an arbitrary prescribed $V$ or classify every equality case.

The next example satisfies the hypotheses of Theorem~\ref{thm:annular-pohozaev} with $\limsup_{R\to\infty}\mathfrak G_2(R)>0$.

\begin{prop}\label{prop:annular-subsequence-essential}
In dimension two with $F(\xi)=|\xi|$, there exist $u,V\in C^\infty(\R^2)$ satisfying the hypotheses of Theorem~\ref{thm:annular-pohozaev} with $\gamma=0$ and a polynomial upper bound with exponent $5$, such that
\[
 \int_{\R^2}Ve^{2u}\,\d x=2\pi,\quad
 \liminf_{R\to\infty}\mathfrak G_2(R)=0,
 \quad \limsup_{R\to\infty}\mathfrak G_2(R)\ge2\pi.
\]
Thus neither equality in the mass bound nor the polynomial upper bound implies $\mathfrak G_2(R)\to0$ for all $R\to\infty$.
\end{prop}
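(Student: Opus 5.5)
We will build the example by the same radial ODE device as in Proposition~\ref{prop:equality-example}, with $N=2$, $F(\xi)=|\xi|$ and $t=\ln r$. Set $u=U(\ln|x|)$ with $U'=-s$, where $s$ is smooth, vanishes for $t\le1$, and $s\to\beta=1$ as $t\to\infty$. For $N=2$ the formula \eqref{eq:equality-rhs-formula} reads $f=r^{-2}s'(\ln r)$. Hence $\int f\,\d x=2\pi[s]_{-\infty}^\infty=2\pi$ and $\int|f|\,\d x=2\pi\int|s'|\,\d t$. We also have
\[
 \mathfrak G_2(R)=2\pi\int_{\ln R}^{\ln R+\ln 2}(s-1)^2\,\d t .
\]
The task is therefore to choose a profile $s$ with four properties:
\begin{itemize}
\item[(i)] $\int|s'|<\infty$ and $s\to1$;
\item[(ii)] $(s-1)^2$ has integral at least $1$ over infinitely many intervals of length $\ln 2$, which gives $\limsup\mathfrak G_2\ge2\pi$;
\item[(iii)] $\liminf\mathfrak G_2=0$, which is automatic if $s\to1$ off a sparse set;
\item[(iv)] $V=fe^{-2u}$ obeys the cone lower bound with $\gamma=0$ and a global bound $|V|\le C(1+|x|)^5$.
\end{itemize}

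Requirements (i) and (ii) pull against each other. A bump of height $\ge1/\sqrt{\ln 2}$ on a $t$-interval of length $\ln2$ costs total variation of order $2$, and this cannot be summed infinitely often. The resolution is to use bumps in the spatial direction instead: the mass $\int|s'|$ only needs finiteness of $\int|f|\,\d x$, not smallness of $s$'s variation weighted appropriately. So I would reconsider and allow the perturbation to be non-radial, or to have a different sign structure.

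Concretely, I would try $u$ radial with $U'=-s$, where $s=1+\phi_k(t)$ near $t_k=\ln R_k$. Here $\phi_k$ is a bump of amplitude $A_k$ and width $\ell_k$. Then $\int|s'|\approx\sum 2A_k$, which must be finite. Meanwhile the energy over $[t_k,t_k+\ln2]$ is about $A_k^2\min(\ell_k,\ln2)$. This is bounded by $\sum A_k^2\ln2\to0$, so a purely radial construction fails.

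The plan is therefore a non-radial perturbation. Add to $u$ a term $w_k$, supported in the annulus $\{R_k<|x|<2R_k\}$, that oscillates angularly, for example
\[
 w_k=\epsilon_k\,\psi(\ln|x|-t_k)\cos(n_k\theta).
\]
Its Dirichlet energy is of order $\epsilon_k^2n_k^2$, which we fix equal to a constant $\ge2\pi$ (after accounting for cross terms, which average out over $\theta$). Its Laplacian has $L^1$ norm of order $\epsilon_kn_k^2$. To make $\sum\epsilon_kn_k^2<\infty$ while keeping $\epsilon_kn_k\asymp1$, we need $\epsilon_k n_k^2\to0$, which is impossible. So angular oscillation alone also fails.

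The resolution is to let the Laplacian cancel in $L^1$: choose $w_k$ harmonic inside the annulus, for example
\[
 w_k=\epsilon_k\,\mathrm{Re}\bigl((z/R_k)^{n_k}\bigr)
\]
cut off in $\ln|x|$ over an interval much wider than $\ln 2$. Then $\Delta w_k$ lives only on the cutoff layers, where $w_k$ is exponentially small for large $n_k$ if the cutoffs sit where $|z/R_k|^{n_k}$ is tiny. The precise statement needs care, so I would arrange the cutoff near the inner side at radius $\rho R_k$ with $\rho^{n_k}$ small. On the outer side $(z/R_k)^{n_k}$ grows, so I would use the sum $(z/R_k)^{n_k}+(R_k/\bar z)^{n_k}$-type harmonic pieces matched across $|z|=R_k$, placing the transition in a thin region. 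Accepting this, $f=-\Delta u=r^{-2}s'+$ (small cutoff errors), and the total of these errors is summable.

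The main obstacle is controlling $V=fe^{-2u}$. Where the cutoff errors live, $e^{-2u}\approx r^{2}e^{-2w_k}$, and $|w_k|$ can be of order $\epsilon_k$ times a large power. This is where the exponent $5$ enters: it allows $e^{-2w_k}\lesssim r^3$, that is, $|w_k|\le\tfrac32\ln r$. The cone lower bound with $\gamma=0$ can be secured by adding a fixed negative radial tail $s=1+d/t$ as in \eqref{eq:equality-profile}, giving $|V|\gtrsim1$ on a cone. I would verify last that $\sup u<\infty$, that $\liminf\mathfrak G_2=0$ along radii between the $R_k$, and that $\limsup\ge2\pi$ by the angular energy computation.
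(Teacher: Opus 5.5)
Your analysis of why radial profiles and angularly oscillating bumps fail is correct. The final construction fails at the step you accept without proof: that $\Delta w_k$ consists of small, summable ``cutoff errors''. The obstruction is general. For $w\in C_c^\infty(\R^2)$ one has $\int|\nabla w|^2\,\d x=-\int w\,\Delta w\,\d x\le\|w\|_{L^\infty}\|\Delta w\|_{L^1}$. Because your perturbations have disjoint supports, $f\in L^1(\R^2)$ forces $\sum_k\|\Delta w_k\|_{L^1}<\infty$. On the other hand, $\limsup\mathfrak G_2\ge2\pi$ forces $\int|\nabla w_k|^2\,\d x\ge2\pi-o(1)$ along a subsequence. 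Together these give $\|w_k\|_{L^\infty}\to\infty$, so no bounded-amplitude perturbation can work.

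Your matched harmonic mode, equal to $\epsilon(r/R)^n\cos n\theta$ for $r<R$ and $\epsilon(R/r)^n\cos n\theta$ for $r>R$, has amplitude $\epsilon$ and energy $2\pi\epsilon^2 n$. Its Laplacian is a layer on $\{|x|=R\}$, coming from the jump of the normal derivative, with total variation $8\epsilon n$; smoothing it in a thin region does not reduce this. Energy at least $2\pi$ requires $\epsilon^2 n\ge1$, hence $\|\Delta w_k\|_{L^1}\ge8\sqrt{n_k}\ge8$ for every $k$, and $\int|V|e^{2u}\,\d x=\infty$. Cutting off a single mode instead of matching has the same defect at the outer cutoff, where $\partial_t w\sim\epsilon n$ must be brought to zero.

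The missing idea is a perturbation of unbounded amplitude that spans many logarithmic scales inside one dyadic annulus about the origin. The paper achieves this by concentrating at a point $x_j$ with $|x_j|=3R_j/2$. It takes $w_j=-m_jp_{L_j}(-\ln|x-x_j|)$, a fundamental solution truncated to $e^{-L_j}<|x-x_j|<1$. The ball $B_1(x_j)$ lies in $\{R_j<|x|<2R_j\}$ yet contains $L_j$ logarithmic scales about $x_j$. This gives $\|\Delta w_j\|_{L^1}=4\pi m_j$ and $\int\Delta w_j\,\d x=0$, so $M=2\pi$ is preserved. The energy is $2\pi m_j^2\int b_{L_j}^2\approx2\pi m_j^2L_j$. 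Choosing $m_j=L_j^{-1/2}$ yields energy tending to $2\pi$, summable masses, and amplitude $\sqrt{L_j}$, exactly as the inequality above demands.

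The exponent $5$ then comes from tying $R_j=e^{L_j}$. The inner truncation scale gives $|\Delta w_j|\lesssim m_je^{2L_j}=m_jR_j^2$, together with $e^{-2u_b}\lesssim R_j^2L_j^2$ and $e^{-2w_j}\le e^{2\sqrt{L_j}}$. Hence $|V|\lesssim R_j^5$. Your heuristic $|w_k|\le\frac32\ln r$ does not reflect this accounting: the large factor is the size of $\Delta w_j$ at the concentration scale, not $e^{-2w_j}$.
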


\begin{proof}
Take the smooth Euclidean example $u_b$ in Proposition~\ref{prop:equality-example} with $N=2$ and $\gamma=0$. For $r=|x|\ge e^2$,
\[
 u_b(x)=-\ln  r-\ln \ln  r,\quad
 f_b:=-\Delta u_b=-r^{-2}(\ln  r)^{-2}.
\]
Choose a smooth nondecreasing function $a:\R\to[0,1]$ with $a=0$ on $(-\infty,0]$ and $a=1$ on $[1,\infty)$. For $L>2$, set
\[
 b_L(t)=a(t)a(L-t),\quad p_L(t)=\int_0^t b_L(s)\,\d s.
\]
Then $b_L$ vanishes outside $[0,L]$, equals one on $[1,L-1]$, and $\int_\R|b_L'(t)|\,\d t=2$. Set
\[
 L_j=4^{j+2},\quad R_j=e^{L_j},\quad
 x_j=(3R_j/2,0),\quad m_j=L_j^{-1/2}\quad(j\ge1),
\]
and, for $x\ne x_j$, define
\[
 w_j(x)=-m_jp_{L_j}(-\ln |x-x_j|).
\]
The function is zero outside $B_1(x_j)$ and constant when $|x-x_j|\le e^{-L_j}$; extend it by that constant at the center. Flatness of $a$ at the two transition points gives $w_j\in C_c^\infty(\R^2)$. Direct radial integration yields
\begin{equation}\label{eq:localized-perturbation-estimates}
\begin{aligned}
 \int_{\R^2}|\Delta w_j|\,\d x&=4\pi m_j,\quad
 \int_{\R^2}\Delta w_j\,\d x=0,\\
 \int_{\R^2}|\nabla w_j|^2\,\d x
 &=2\pi m_j^2\int_0^{L_j}b_{L_j}(t)^2\,\d t
 \to2\pi\quad\text{ as }\,j\to\infty.
\end{aligned}
\end{equation}
The unit balls are pairwise disjoint and $|x_j|\to\infty$ as $j\to\infty$. Define
\[
 u=u_b+\sum_{j=1}^{\infty}w_j,\quad f=-\Delta u,
 \quad V=fe^{-2u}.
\]
The sum is locally finite, $u\le u_b$, and $u,V$ are smooth. Since $\sum_{j=1}^{\infty}m_j<\infty$, we have $f\in L^1(\R^2)$ and $\int_{\R^2}f\,\d x=\int_{\R^2}f_b\,\d x=2\pi$. The positive signed mass and the negative values of $V$ on an exterior cone imply a sign change. On the cone $x_1<-|x|/2$, the perturbations vanish and $V=-1$ for large $|x|$, proving \eqref{eq:cone-growth}.

We verify \eqref{eq:polynomial-upper}. On $B_1(x_j)$, we have $|x|\asymp R_j$, $|w_j|\le\sqrt{L_j}$, and
\[
 |\Delta w_j|\le C m_j e^{2L_j}=C m_jR_j^2,
 \quad e^{-2u_b}\le C R_j^2L_j^2.
\]
Consequently,
\[
 |V(x)|\le C(1+R_j^4L_j^{3/2})e^{2\sqrt{L_j}}
 \le C R_j^5.
\]
Outside these balls, $V$ equals the smooth function $f_b e^{-2u_b}$, so $|V(x)|\le C(1+|x|)^5$ globally.

Finally, the annulus $\{R_j<|x|<2R_j\}$ contains exactly the $j$th perturbation. Since
\[
 \|\nabla(u_b+\ln |x|)\|_{L^2(\{R_j<|x|<2R_j\})}\to0,
\]
the triangle inequality for the $L^2$ norm gives $\mathfrak G_2(R_j)\to2\pi$. The annulus $\{2R_j<|x|<4R_j\}$ contains no perturbation, so $\mathfrak G_2(2R_j)\to0$. The signed mass gives $\gamma_0=1$, as used in these error integrals.
\end{proof}

\begin{rem}\label{rem:upper-growth-planar}
In the Euclidean plane, the power $2$ in~\eqref{eq:larger-upper-growth}
cannot be increased without losing the annular conclusion. Fix $\vartheta>2$.
There exist smooth $u,V$, with $V$ sign-changing, satisfying~\eqref{eq:basic-class} and
\eqref{eq:cone-growth} with $\gamma=0$, such that
\begin{equation}\label{eq:upper-growth-counterexample}
 \ln(e+|V(x)|)\le C[\ln(e+|x|)]^\vartheta,\quad
 M=2\pi,\quad
 \liminf_{R\to\infty}\mathfrak G_2(R)\ge2\pi.
\end{equation}
Use $u_b$, $b_L$, $p_L$, and $w_j$ from the proof of
Proposition~\ref{prop:annular-subsequence-essential}, replacing the
parameters by
\[
 L_j=j^\vartheta,\quad m_j=j^{-\vartheta/2},\quad
 x_j=((3/2)^j,0),\quad j\ge j_0.
\]
Choose $j_0$ so that the balls $B_1(x_j)$ are pairwise disjoint and
contained in $\R^2\setminus\overline{B_{e^2}}$. Set
$u=u_b+\sum_{j=j_0}^\infty w_j$ and $V=(-\Delta u)e^{-2u}$.
Formula~\eqref{eq:localized-perturbation-estimates}, with
$m_j^2L_j=1$ and $\sum_{j=j_0}^\infty m_j<\infty$, gives finite absolute
mass and $M=2\pi$. The cone $x_1<-|x|/2$ is unchanged, and $u\le u_b$.
On $B_1(x_j)$,
\[
 |w_j|\le j^{\vartheta/2},\quad
 |\Delta w_j|\le C m_j e^{2j^\vartheta},\quad
 e^{-2u_b}\le C(3/2)^{2j}j^2.
\]
Hence $\ln(e+|V|)\le Cj^\vartheta$ there, which proves the growth bound
in~\eqref{eq:upper-growth-counterexample}. Outside the balls, $V$ equals
the bounded prescribed function of $u_b$.

For every sufficiently large $R$, choose $j$ with
$5R/4\le |x_j|<15R/8$. Then $B_1(x_j)\subset B_{2R}\setminus B_R$.
The disjoint supports and~\eqref{eq:localized-perturbation-estimates} give
\[
 \mathfrak G_2(R)^{1/2}
 \ge \|\nabla w_j\|_{L^2(\R^2)}
 -\|\nabla(u_b+\ln|x|)\|_{L^2(B_{2R}\setminus B_R)}
 =\sqrt{2\pi}-o(1)
 \quad\text{ as }\,R\to\infty.
\]
Nevertheless, the Pohozaev condition at infinity holds: the circles of
radii $r_j=\tfrac54(3/2)^j$ avoid all the supports for large $j$, so
$u=u_b$ in their neighborhoods and both limits in~\eqref{eq:PB-boundary}
follow from~\eqref{eq:equality-profile}.
Thus~\eqref{eq:upper-growth-counterexample} proves sharpness for annular
convergence in the Euclidean plane, not necessity of the upper growth
assumption for the Pohozaev condition or Theorem~\ref{thm:rigidity}.
\end{rem}

The same constructions show that the completeness obstruction is sharp and that the total mass does not decide completeness.

\begin{cor}
In the Euclidean plane, for every $-2<\gamma\le0$ the example in Proposition~\ref{prop:equality-example} gives a complete conformal metric with sign-changing Gaussian curvature and total curvature $2\pi+\pi\gamma$. For $\gamma=0$ this metric has finite area and Gaussian curvature $-1$ outside a compact set. There also exists an incomplete finite-area conformal metric satisfying the lower cone bound with $\gamma=0$, a polynomial upper bound, and the same total curvature $2\pi$.
\end{cor}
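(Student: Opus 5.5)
The corollary has three assertions, and I will prove them in order, all by explicit constructions with $N=2$ and $F(\xi)=|\xi|$. First, for $-2<\gamma\le0$ the example of Proposition~\ref{prop:equality-example} gives a complete metric with total curvature $2\pi+\pi\gamma$. Second, for $\gamma=0$ that metric has finite area and curvature $-1$ outside a compact set. Third, there is an incomplete finite-area metric with total curvature $2\pi$.

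\emph{Total curvature and sign change.} Take $u$ and $V$ from Proposition~\ref{prop:equality-example}. By \eqref{eq:equality-mass} with $N=2$ and $\kap_2=\pi$, the total curvature is $\int_{\R^2}K_g\,\d A_g=M=2\pi(1+\gamma/2)=2\pi+\pi\gamma$. The sign change of $K_g=V$ is part of that proposition.

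\emph{Completeness.} Here $\beta=1+\gamma/2\in(0,1]$ and, by \eqref{eq:equality-profile}, $e^{u}=r^{-\beta}(\ln r)^{-1}$ for $r\ge e^2$. Any divergent curve must eventually leave every ball. Since $|\nabla r|=1$, its $g$-length is at least
\[
\int_{e^2}^\infty e^{u}\,\d r=\int_{e^2}^\infty\frac{\d r}{r^\beta\ln r}.
\]
This integral diverges for every $\beta\le1$: for $\beta<1$ the power dominates, and for $\beta=1$ it equals $\int\d t/t$ after $t=\ln r$. Divergent curves therefore have infinite length, and $g$ is complete. This is consistent with Corollary~\ref{cor:completeness}, which forces $\gamma\le0$.

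\emph{Finite area and curvature $-1$ for $\gamma=0$.} The area is
\[
\int e^{2u}\,\d x=2\pi\int\frac{\d r}{r(\ln r)^2}<\infty.
\]
The exterior formula for $V$ in the proof of Proposition~\ref{prop:equality-example}, with $N=2$, $d=1$ and $\gamma=0$, gives $V=-(N-1)d\,r^\gamma=-1$ for $r\ge e^2$.

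\emph{Incomplete example.} I will take the construction of Proposition~\ref{prop:annular-subsequence-essential}, which has total curvature $2\pi$, the cone bound with $\gamma=0$, and the polynomial upper bound with exponent $5$. Since $u\le u_b$, the area is finite.
\begin{itemize}
\item[(a)] \emph{Incompleteness, to be checked.} The perturbations $w_j=-m_jp_{L_j}$ reach depth $-m_jL_j=-\sqrt{L_j}$ in the small balls, so they make $e^{u}$ tiny there. The $g$-length of a ray from near $\pa B_1(x_j)$ to the center is about
\[
\int_0^{L_j}e^{u_b}e^{-m_jp(t)}e^{-t}\,\d t\lesssim R_j^{-1}L_j^{-1}.
\]
\item[(b)] \emph{Main obstacle.} A bounded radial segment in one ball does not by itself give incompleteness; the centers are interior points.
\item[(c)] \emph{Fix.} I will instead build a curve going to infinity along the positive $x_1$-axis and show its length $\int e^{u}\,\d r$ is finite. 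On the axis $u=u_b$ except inside the balls $B_1(x_j)$, so this requires redesigning the perturbation. The cleanest choice is to replace $u_b$ by the profile $-\beta t-d\ln t$ with $\beta>1$ along a thin sector, then recheck cone growth and mass.
\item[(d)] \emph{Simpler alternative, tried first.} Take the Remark~\ref{rem:critical-cone-exponent}-type construction with radial profile $u=-\ln r-2\ln\ln r$. Then $M=2\pi$, $V=-\tfrac12\ln r\cdot$const, and the length is $\int\d r/(r(\ln r)^2)<\infty$, so the metric is incomplete. The area is finite, and the cone bound holds with $\gamma=0$ because $|V|\gtrsim1$. The upper bound is polynomial.
\end{itemize}
The expected main step is verifying the sign and growth of $V$ for this last profile. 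That follows from formula~\eqref{eq:equality-rhs-formula} with $s=\chi(1+2/t)$, giving $f=-2r^{-2}t^{-2}$ and $V=-2(\ln r)^2$.
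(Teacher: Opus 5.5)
Your first two assertions follow the paper's argument. The total curvature $M=2\pi(1+\gamma/2)$ comes from \eqref{eq:equality-mass}, and completeness comes from integrating $e^{u}=r^{-\beta}(\ln r)^{-1}$ against the variation of $r$ along a divergent curve. For $\gamma=0$ you also obtain finite area and $V=-1$ outside $W_{e^2}$. For the incomplete metric you take a genuinely different example, and your item (d) is a correct proof.

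The paper invokes Proposition~\ref{prop:infinite-gradient-error-example} with $N=2$, $\gamma=0$, $a=1/2$. This gives $u=-\ln r-\sqrt{\ln r}$ and $V=-\tfrac14 t^{-3/2}e^{2\sqrt t}$, with $t=\ln r$. Mass, sign change and growth bounds are then inherited with no new computation.

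You stay in the template of Proposition~\ref{prop:equality-example} with $\beta=1$ but double the $\ln\ln r$ coefficient. Taking $s=\chi(t)(1+2/t)$ gives $u=-\ln r-2\ln\ln r$. Then $f=r^{-2}s'(\ln r)=-2r^{-2}(\ln r)^{-2}$ and $V=-2(\ln r)^2$ for $r\ge e^2$. The verifications are:
\begin{itemize}
\item $\int|f|<\infty$ and $M=2\pi[s]_{-\infty}^{\infty}=2\pi$;
\item $u$ is bounded above;
\item $|V|\ge 8$, which gives the cone bound with $\gamma=0$, and $|V|\le C(1+|x|)$;
\item the radial length $\int_2^\infty t^{-2}\,\d t$ and the area $2\pi\int_2^\infty t^{-4}\,\d t$ are finite.
\end{itemize}

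This costs a short computation, and it yields a sharper example than the paper's. First, $|V|$ grows only like $(\ln r)^2$. Second, $\nabla(u+\ln r)=-2x/(r^2\ln r)$, so $\cG_\infty^{(2)}(u)<\infty$. By Proposition~\ref{prop:finite-gradient-error}, your incomplete solution therefore satisfies the Pohozaev condition at infinity. For profiles $-\ln r-d\ln\ln r$, completeness is decided by $d$ (complete iff $d\le 1$). This coefficient is invisible to $M$ and to the annular asymptotics.

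When writing it up, make three corrections.
\begin{itemize}
\item Delete the intermediate claim $V=-\tfrac12\ln r\cdot\mathrm{const}$, which is wrong; your final line, $V=-2(\ln r)^2$, is correct.
\item Do not call this a construction of the type in Remark~\ref{rem:critical-cone-exponent}, which has $\beta=0$.
\item Discard items (a)--(c) rather than trying to repair them. The metric of Proposition~\ref{prop:annular-subsequence-essential} is in fact complete. On every circle $|x|=\rho$ with $\rho\notin\bigcup_j[|x_j|-1,|x_j|+1]$ one has $u=u_b$. The integral $\int\frac{\d\rho}{\rho\ln\rho}$ diverges over this set, while the excluded intervals contribute only a convergent sum.
\end{itemize}
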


\begin{proof}
For the first example, write $\beta=1+\gamma/2\in(0,1]$. Outside a compact set its conformal factor is
\[
 e^{u(r)}=r^{-\beta}(\ln  r)^{-1}.
\]
The radial length diverges:
\[
 \int_{e^2}^\infty\frac{r^{-\beta}}{\ln  r}\,\d r=\infty.
\]
For any curve tending to infinity, the metric length is bounded below by the integral of this radial factor against the absolute variation of $r$. The same divergence therefore proves completeness. When $\beta=1$, the exterior area is $2\pi\int_{e^2}^\infty [r(\ln  r)^2]^{-1}\,\d r<\infty$, and the function $V$ in Proposition~\ref{prop:equality-example} is $V=-1$ there.

For the incomplete metric, use Proposition~\ref{prop:infinite-gradient-error-example} with $N=2$, $\gamma=0$, and $a=1/2$. Then $u(r)=-\ln  r-\sqrt{\ln  r}$ for large $r$, and
\[
 \int_{e^2}^\infty e^{u(r)}\,\d r
 =\int_2^\infty e^{-\sqrt t}\,\d t<\infty,\quad
 2\pi\int_{e^2}^\infty r e^{2u(r)}\,\d r
 =2\pi\int_2^\infty e^{-2\sqrt t}\,\d t<\infty.
\]
A radial curve reaching infinity has finite length, so the metric is incomplete. Proposition~\ref{prop:infinite-gradient-error-example} supplies the mass, sign change, and growth bounds.
\end{proof}

For $\gamma=0$, the complete example has, in the coordinates
$t=\ln r$ and $\theta\in\R/(2\pi\mathbb Z)$, the exterior metric
$g=t^{-2}(\d t^2+\d\theta^2)$, a hyperbolic cusp.
Such constant negative-curvature cusp asymptotics are classical;
see Chou--Wan~\cite[Theorem~2]{ChouWan1995}.
The two examples with mass $2\pi$ show that the leading logarithmic term
and total curvature do not determine completeness.
Theorem~\ref{thm:annular-pohozaev} concerns normalized functions and gradients; no convergence of the induced metric distances is used here.

\subsection{Sharpness of the rigidity exponent}

The functions $V$ in Proposition~\ref{prop:equality-example} need not satisfy $x\cdot\nabla V\le0$. We now construct a monotone family to prove the optimality of the exponent in Theorem~\ref{thm:rigidity}.

\begin{prop}\label{prop:monotone-sharpness}
Let $0\le\gamma<N/(N-1)$. There exist a sign-changing $V\in C^1(\RN)$ and a solution $u$ in \eqref{eq:basic-class} such that
\be \label{eq:monotone-sharpness-conditions}
 x\cdot\nabla V(x)<0\quad(x\ne0),\quad
 c|x|^\gamma\le |V(x)|\le C|x|^\gamma\quad(|x|\ge R_0).
\ee
The solution has finite exterior gradient error and signed mass
\be \label{eq:monotone-sharpness-mass}
 M=N\kap _N\beta^{N-1},\quad
 1+\frac\gamma N<\beta<\frac N{N-1}.
\ee
Consequently, the cone exponent in Theorem~\ref{thm:rigidity} and Corollary~\ref{cor:unconditional-rigidity} cannot be decreased.
\end{prop}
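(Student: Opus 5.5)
We construct $u$ and $V$ simultaneously, as in Proposition~\ref{prop:equality-example}. We take $u(x)=U(\ln F^0(x))$ with $U'=-s$ and define $V:=fe^{-Nu}$. By the dual identities and \eqref{eq:equality-rhs-formula}, with $g:=s^{N-1}$ and $t=\ln r$, $r=F^0(x)$,
\[
 f=r^{-N}g'(t),\qquad V=g'(t)\,e^{-Nt-NU(t)}=:\mathcal V(t).
\]
Thus $x\cdot\nabla V=\mathcal V'(\ln F^0)$, and the whole problem becomes a one-dimensional design problem for $s$. Differentiating,
\[
 \mathcal V'=e^{-Nt-NU}\big(g''+N(s-1)g'\big).
\]
So the monotonicity requirement $x\cdot\nabla V<0$ for $x\ne0$ is the differential inequality
\begin{equation*}
 \mathcal L[g]:=g''+N\big(g^{1/(N-1)}-1\big)g'<0\quad\text{on }\R .
\end{equation*}
Equivalently, $\mathcal V$ is strictly decreasing. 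The signed mass is $N\kap_N[g]_{-\infty}^{\infty}$ by polar integration. Hence we need $g(-\infty)=0$ and $g(+\infty)=\beta^{N-1}$, with $\beta$ as in \eqref{eq:monotone-sharpness-mass}. The sign change is then automatic: $\mathcal V$ is positive where $g$ increases and negative where it decreases.

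\textbf{Exterior profile.} We take $s(t)=\beta+\var e^{-\kappa t}$ with $\kappa:=N\beta-N-\gamma$. This $\kappa$ is positive because $\beta>1+\gamma/N$, and $\kappa<N/(N-1)$ because $\beta<N/(N-1)$. Then $g'<0$ and $e^{-NU}\asymp r^{N\beta}$, so
\[
 V\asymp -r^{N\beta-N-\kappa}=-r^{\gamma},
\]
which gives the two-sided growth in \eqref{eq:monotone-sharpness-conditions}. A direct computation gives
\[
 \frac{d}{dt}\ln|\mathcal V|=\gamma+\var e^{-\kappa t}\Big(N-\frac{(N-2)\kappa}{s}\Big)+O(\var^2e^{-2\kappa t}).
\]
The bracket is positive because $(N-2)\kappa/s<(N-2)N/((N-1)\beta)<N$, using $\beta>1$ and the bound on $\kappa$. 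Hence $|\mathcal V|$ is strictly increasing, so $\mathcal V$ is strictly decreasing; this also covers $\gamma=0$. This is where the upper restriction $\beta<N/(N-1)$ enters. The exterior gradient error is $\int^\infty(\var e^{-\kappa t})^N\,dt<\infty$.

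\textbf{Interior profile.} Near $t=-\infty$, regularity with $V(0)$ finite and positive forces $s\sim c\,e^{Nt/(N-1)}$, i.e.\ $g\sim c'e^{Nt}$. The pure power gives $\mathcal L[g]=Nsg'>0$, which is the wrong sign. We therefore perturb to
\[
 g=c'e^{Nt}\big(1-d\,e^{qt}\big),\qquad 0<q<\frac{N}{N-1},\quad d>0.
\]
Then $g''-Ng'=-c'dq(N+q)e^{(N+q)t}$, and this term dominates $Nsg'=O(e^{(N+N/(N-1))t})$, so $\mathcal L[g]<0$ near $-\infty$. The resulting $u$ is $C^1$ and bounded above. $V$ is $C^1$ near $0$ because $V=\mathcal V(\ln F^0)$ with $\mathcal V(t)=V(0)+O(e^{qt})$, and $q$ can be chosen so that this correction is $C^1$ in $x$ (taking $q\ge1$ if necessary, which is compatible with $q<N/(N-1)$ only when $N=2$; otherwise one checks a Hölder-type $C^1$ bound from $F^0\in C^2$ away from $0$). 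This point must be verified.

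\textbf{Gluing (main obstacle).} The hardest step is joining the two regimes by a smooth $g$ that keeps $\mathcal L[g]<0$ throughout, including across the point where $g'$ vanishes. There $g$ reaches a maximum with $g''<0$, and $s$ overshoots $\beta$ before decaying to it. To handle this we reformulate: setting $g'=e^{-N\int(s-1)}z$, the condition is simply $z'<0$, and indeed $z\propto\mathcal V$. So we prescribe $z$ (equivalently $\mathcal V$) as an explicit strictly decreasing function matching the two asymptotic regimes. We then solve the resulting first-order system for $(g,U)$, a planar ODE $U'=-g^{1/(N-1)}$, $g'=\mathcal V e^{Nt+NU}$, by a shooting argument in the initial constant $U(-\infty)$. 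The shooting parameter is chosen so that $g(\infty)=\beta^{N-1}$ with the exponential rate above. If shooting proves delicate, we instead take an explicit $C^2$ interpolation of $s$ over a compact $t$-interval of large length with small $\var$, and check $\mathcal L[g]<0$ there by making $g''$ dominate $N|s-1||g'|$.

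Once these checks are done, \eqref{eq:monotone-sharpness-conditions} and \eqref{eq:monotone-sharpness-mass} hold. Since $\gamma<N/(N-1)$ was arbitrary, the optimality conclusion for Theorem~\ref{thm:rigidity} and Corollary~\ref{cor:unconditional-rigidity} follows.
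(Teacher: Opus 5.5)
There is a genuine gap: the step that joins your interior and exterior profiles is the content of the proposition, and it is not carried out.

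Your reduction is sound. For $u=U(\ln F^0)$ one has $f=r^{-N}g'(t)$ and $x\cdot\nabla V=\mathcal V'(\ln F^0)$, so monotonicity is exactly $\mathcal L[g]<0$. Your exterior computation is even exact (there is no $O(\var^2)$ remainder), and your rate $\kappa=N\beta-N-\gamma$ is the exterior rate $Na/(N-1)$ of the paper's example.

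The proposition \emph{is} the assertion that an admissible interior regime and such an exterior regime can be connected with $\mathcal L[g]<0$ on all of $\R$. You leave exactly this step as ``the main obstacle'', with two strategies that are not executed. It is not a routine interpolation. Since $g'(\pm\infty)=0$ and $\int_{\R}sg'\,\d t=\frac{N-1}{N}[s^N]_{-\infty}^{\infty}$,
\[
 \int_{\R}\mathcal L[g]\,\d t=\beta^{N-1}\big((N-1)\beta-N\big),
\]
which is the global Pohozaev identity \eqref{eq:global-pohozaev} for your ansatz. So the pointwise sign condition encodes a global balance. For $\beta\ge N/(N-1)$ no connecting profile exists, even though both end regimes still make sense: for $N=2$ your exterior bracket equals $2$ for every $\beta$. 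The restriction $\beta<N/(N-1)$ is therefore essentially a constraint on the gluing, not on the exterior.

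A local check that $g''$ dominates $N|s-1||g'|$ on a long window cannot succeed without controlling this integral. The shooting variant replaces the construction by an existence theorem for entire radial solutions with a prescribed decreasing $V$. You would have to show that some value of $U(-\infty)$ keeps $g$ positive for all $t$, rather than letting it decrease through zero (after which $u$ increases and leaves \eqref{eq:basic-class}). You would also need the resulting limit to satisfy $\beta>1+\gamma/N$ strictly. No such argument is given.

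The paper avoids gluing by using one closed-form profile. Set $\tau=1+F^0(x)^{N/(N-1)}$, $a=\frac12\big(1-\frac{(N-1)\gamma}{N}\big)$, $\beta=\frac{N-a}{N-1}$, $d=\frac{a}{2(N-1)}$, and take
\[
 u=-\tfrac{(N-1)\beta}{N}\ln\tau+\tfrac1{2N}\tau^{-a}.
\]
Because $r^N=(r^{N/(N-1)})^{N-1}$, this gives $a_N(\nabla u)=-(A(\tau)/\tau)^{N-1}x$ with $A=\beta+d\tau^{-a}$. It also gives $V=P(\tau)B(\tau)$, where $P>0$, $\tau P'/P\in(-a,-a/2]$, and $B=\beta+(1+a-a\tau)d\tau^{-a}$ is strictly decreasing. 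Then $(PB)'<0$ follows from two elementary cases according to the sign of $B$, and $1+\gamma/N<\beta<N/(N-1)$ is built into the parameters.

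Separately, your regularity remark contains a slip. The interval $(1,N/(N-1))$ is nonempty for every $N\ge2$. You need $q>1$, not $q\ge1$, because $F^0$ is not differentiable at the origin. For $q\in(1,N/(N-1))$ your expansion gives $\mathcal V'(t)=O(e^{qt})$, hence $|\nabla V|\le C(F^0)^{q-1}$ near $0$. So $V\in C^1(\RN)$ without any H\"older argument.
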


\begin{proof}
Set
\be \label{eq:monotone-sharpness-parameters}
 q=\frac N{N-1},\quad
 a=\frac12\Big(1-\frac\gamma q\Big),\quad
 \beta=\frac{N-a}{N-1},\quad d=\frac{a}{2(N-1)}.
\ee
Thus $0<a\le1/2$, $\beta>1$, and $\gamma=q(1-2a)$. With $r=F^0(x)$ and $t=1+r^q$, define
\be \label{eq:monotone-sharpness-profile}
 u(x)=-\frac\beta q\ln  t+\frac1{2N}t^{-a}.
\ee
Since $r^q\in C^1(\RN)$ and $\nabla(r^q)=O(r^{q-1})$ at zero, $u\in C^1(\RN)\cap W^{1,N}_{loc}(\RN)$. The function in \eqref{eq:monotone-sharpness-profile} is strictly decreasing in $r$ and is bounded above by $1/(2N)$.

For $t\ge1$, write
\[
 D(t)=dt^{-a},\quad A(t)=\beta+D(t),\quad
 B(t)=\beta+(1+a-at)D(t).
\]
The identities \eqref{eq:polar-identities} and $q(N-1)=N$ give
\be \label{eq:monotone-sharpness-equation}
 a_N(\nabla u)=-\Big(\frac{A(t)}t\Big)^{N-1}x,
 \quad f:=-Q_Nu=N t^{-N}A(t)^{N-2}B(t).
\ee
The vector field in \eqref{eq:monotone-sharpness-equation} is $C^1$ at the origin, so the identity holds weakly on all of $\RN$, with no singular measure at zero. Define
\be \label{eq:monotone-sharpness-coefficient}
 V(x)=f(x)e^{-Nu(x)}
 =N t^{-a}A(t)^{N-2}B(t)\exp(-\tfrac12t^{-a}).
\ee
Here $N\beta/q-N=-a$. Formula \eqref{eq:monotone-sharpness-coefficient} also gives $V\in C^1(\RN)$.

We verify $x\cdot\nabla V<0$ for $x\ne0$. Set
\[
 P(t)=N t^{-a}A(t)^{N-2}\exp(-\tfrac12t^{-a})>0,
 \quad \mathcal V(t)=P(t)B(t).
\]
Then $V(x)=\mathcal V(1+r^q)$, so it suffices to prove $\mathcal V^{\prime}(t)<0$ for $t\ge1$.
Differentiation gives
\be \label{eq:monotone-sharpness-derivatives}
 \begin{aligned}
 \frac{tP'(t)}{P(t)}
 &=-a+(N-1)D(t)-(N-2)\frac{aD(t)}{A(t)}:=C(t),\\
 tB'(t)&=-aD(t)(1+a+(1-a)t).
 \end{aligned}
\ee
Since $A(t)>\beta>1>a$ and $(N-1)D(t)\le a/2$, \eqref{eq:monotone-sharpness-derivatives} gives
\[
 -a<C(t)\le-a/2<0,\quad B'(t)<0.
\]
If $B(t)\ge0$, then $t\mathcal V^{\prime}(t)/P(t)=B(t)C(t)+tB'(t)<0$. If $B(t)<0$, the lower bound for $C(t)$ yields
\[
 \begin{aligned}
 \frac{t\mathcal V^{\prime}(t)}{P(t)}
 &\le -aB(t)+tB'(t)\\
 &=-a\beta-2a(1+a)D(t)-a(1-2a)tD(t)<0.
 \end{aligned}
\]
Thus $\mathcal V^{\prime}(t)<0$ for every $t\ge1$, and $x\cdot\nabla V=q r^q\mathcal V^{\prime}(t)<0$ for $x\ne0$.

The value $B(1)=\beta+d$ is positive, whereas
$B(t)\sim-ad\,t^{1-a}$ as $t\to\infty$. Hence $V$ changes sign, and
\be \label{eq:monotone-sharpness-asymptotics}
 V(x)\sim-Nad\,\beta^{N-2}r^\gamma,
 \quad f(x)\sim-Nad\,\beta^{N-2}r^{-N-qa}\quad\text{ as }\, r\to\infty.
\ee
Norm equivalence gives the two-sided bound in \eqref{eq:monotone-sharpness-conditions}. The second estimate in \eqref{eq:monotone-sharpness-asymptotics} and local boundedness give $f\in L^1(\RN)$. Integrating \eqref{eq:monotone-sharpness-equation} on $W_R$, and then letting $R\to\infty$, gives
\[
 \int_{W_R}f\,\d x
 =N\kap _N\Big(\frac{R^q A(1+R^q)}{1+R^q}\Big)^{N-1}
  \to N\kap _N\beta^{N-1}\quad\text{ as }\, R\to\infty.
\]
Moreover, \eqref{eq:monotone-sharpness-parameters} gives
$\beta-(1+\gamma/N)=a/(N-1)>0$ and $q-\beta=a/(N-1)>0$, proving \eqref{eq:monotone-sharpness-mass}.

Finally, \eqref{eq:monotone-sharpness-mass} gives $\gamma_0=\beta$, and
\eqref{eq:monotone-sharpness-profile} yields
$F(\nabla u+\gamma_0\nabla\ln r)=O(r^{-1-qa})$ at infinity. Polar integration proves $\cG_\infty^{(N)}(u)<\infty$. Thus the examples satisfy all the remaining hypotheses in the rigidity statements, including the boundary condition. The case $\gamma=0$ also satisfies the lower cone bound for every $-N<\gamma<0$.
\end{proof}

\medskip

		\noindent\textbf{Acknowledgements.}~ 	
Y. Liu is  supported by the National Natural Science
Foundation of China (grants 12201089 and 12371051), the Chongqing
Normal University Foundation (No. BWQB2023006), and the Open Research Fund
of the Key Laboratory of Nonlinear Analysis and Applications, Ministry of Education, Central China Normal University (No. NAA2025ORG006). Z. Tang is supported by NSFC grants 12471099 and 12271223.
H. Wang is supported by China Postdoctoral Science Foundation 2026M793418.
\medskip

		\noindent\textbf{Data availability statement.}~	 Data sharing not applicable to this article as no datasets were generated or analysed during the current study.

\medskip

		\noindent\textbf{Declarations.}~ \textbf{Conflict of interest} ~ On behalf of all authors, the corresponding author states that there is no conflict of interest.

\medskip

\noindent\textbf{AI assistance statement.} ~The initial version of this manuscript was completed by the authors in August 2026. OpenAI models were subsequently used for language editing, consistency checks, and assistance in drafting mathematical revisions. The authors are responsible for independently verifying all revised arguments and references before submission.

\end{document}